\numberwithin{equation}{section}
\DeclareMathOperator{\supp}{supp}
\DeclareMathOperator{\op}{op}
\DeclareMathOperator{\vol}{vol}
\DeclareMathOperator{\grad}{grad}
\DeclareMathOperator{\ind}{index}
\DeclareMathOperator{\End}{End}
\DeclareMathOperator{\Spin}{Spin}
\DeclareMathOperator{\SO}{SO}
\DeclareMathOperator{\SU}{SU}
\DeclareMathOperator{\DInd}{D-Ind}
\DeclareMathOperator{\Ad}{Ad}
\DeclareMathOperator{\Cl}{Cl}
\newcommand{\beq}[1]{\begin{equation} \label{#1}}
\newcommand{\eeq}{\end{equation}}
\newcommand{\bea}{\begin{eqnarray}}
\newcommand{\eea}{\end{eqnarray}}
\begin{document}

\theoremstyle{plain}
\newtheorem{theorem}{Theorem}[section]
\newtheorem{thm}{Theorem}[section]
\newtheorem{lemma}[theorem]{Lemma}
\newtheorem{proposition}[theorem]{Proposition}
\newtheorem{prop}[theorem]{Proposition}
\newtheorem{corollary}[theorem]{Corollary}
\newtheorem{conjecture}[theorem]{Conjecture}
\newtheorem{question}[theorem]{Question}

\theoremstyle{definition}
\newtheorem{definition}[theorem]{Definition}
\newtheorem{defn}[theorem]{Definition}
\newtheorem{example}[theorem]{Example}
\newtheorem{remark}[theorem]{Remark}
\newtheorem{rem}[theorem]{Remark}

\newcommand{\C}{\mathbb{C}}
\newcommand{\R}{\mathbb{R}}
\newcommand{\Z}{\mathbb{Z}}
\newcommand{\N}{\mathbb{N}}
\newcommand{\Q}{\mathbb{Q}}

\newcommand{\Supp}{{\rm Supp}}

\newcommand{\field}[1]{\mathbb{#1}}
\newcommand{\bZ}{\field{Z}}
\newcommand{\bR}{\field{R}}
\newcommand{\bC}{\field{C}}
\newcommand{\bN}{\field{N}}
\newcommand{\bT}{\field{T}}
\newcommand{\cB}{{\mathcal{B} }}
\newcommand{\cK}{{\mathcal{K} }}
\newcommand{\cF}{{\mathcal{F} }}
\newcommand{\cO}{{\mathcal{O} }}
\newcommand{\cE}{\mathcal{E}}
\newcommand{\cS}{\mathcal{S}}
\newcommand{\cN}{\mathcal{N}}
\newcommand{\calL}{\mathcal{L}}

\newcommand{\KK}{K \! K}

\newcommand{\norm}[1]{\| #1\|}

\newcommand{\Spinc}{\Spin^c}

\newcommand{\HH}{{\mathcal{H} }}
\newcommand{\Hpi}{\HH_{\pi}}

\newcommand{\DNR}{D_{N \times \R}}

%\newcommand[4]{\mattwo}
%{
%\begin{pmatrix}
%#1 & #2 \\ #3  & #4
%\end{pmatrix}
%}

\def\kt{\mathfrak{t}}
\def\kk{\mathfrak{k}}
\def\kp{\mathfrak{p}}
\def\kg{\mathfrak{g}}
\def\kh{\mathfrak{h}}
\def\so{\mathfrak{so}}
\def\cut{c}

\newcommand{\ddt}{\left. \frac{d}{dt}\right|_{t=0}}

\title[Positive scalar curvature  and a Callias index theorem]{Positive scalar curvature and an equivariant Callias-type index theorem for proper actions}
\author{Hao Guo}
\address{Texas A\&M University}
\email{haoguo@math.tamu.edu}
\author{Peter Hochs}
\address{University of Adelaide}
\email{peter.hochs@adelaide.edu.au}
\author{Varghese Mathai}
\address{University of Adelaide}
\email{mathai.varghese@adelaide.edu.au}

\maketitle

\begin{abstract}
For a proper action by a locally compact group $G$ on a manifold $M$ with a $G$-equivariant $\Spin$-structure, we obtain obstructions to the existence of complete $G$-invariant Riemannian metrics with uniformly positive scalar curvature. We focus on the case where $M/G$ is noncompact. The obstructions follow from a Callias-type index theorem, and relate to positive scalar curvature near hypersurfaces in $M$. We also deduce some other applications of this index theorem. If $G$ is a connected Lie group, then the obstructions to positive scalar curvature vanish under a mild assumption on the action. In that case, we generalise a construction by Lawson and Yau to obtain complete $G$-invariant Riemannian metrics with uniformly positive scalar curvature, under an equivariant bounded geometry assumption.
\end{abstract}

\tableofcontents

\section{Introduction}

Let $G$ be a locally compact group, acting properly on a manifold $M$.
Suppose that $M$ has a $G$-equivariant $\Spin$-structure.
The results in this paper are about the following question.
\begin{question}\label{question psc}
When does $M$ admit a complete, $G$-invariant Riemannian metric with uniformly positive scalar curvature?
\end{question}
We are mainly interested in the case where $M/G$ is noncompact. 

The literature on the non-equivariant case of Question \ref{question psc} is too vast to summarise, but important work where $M$ is noncompact was done by Gromov and Lawson \cite{Gromov83}. A more refined perspective on the non-equivariant case is to consider a manifold $X$, and let $M$ be its universal cover, and $G$ is fundamental group. This allows one to construct obstructions to metrics of positive scalar curvature in terms of $G$-equivariant index theory on $M$, refining index-theoretic obstructions on $X$. If $X$ is compact, this is the origin of the even more refined Rosenberg index 
\cite{Rosenberg86a, Rosenberg83,  Rosenberg86b}, in $KO$-theory of the real maximal group $C^*$-algebra of $G$.

More generally, if $G$ is a discrete group not necessarily acting freely on $M$, then $X \coloneqq M/G$ is an orbifold, whence Question \ref{question psc} becomes the question of whether $X$ admits an orbifold metric of positive scalar curvature.

We consider the case where $G$ is not necessarily discrete, and does not necessarily act freely. Results on this case of Question \ref{question psc} in the case where $G$ is an almost-connected Lie group and $M/G$ is compact were obtained in \cite{GMW, Piazza18}. In this paper, we focus on the case where $M/G$ is noncompact. 

\subsection*{Results on positive scalar curvature}

We first obtain obstructions to $G$-invariant Riemannian metrics with positive scalar curvature, both in the $K$-theory of the maximal or reduced group $C^*$-algebra of $G$, and in terms of numerical topological invariants generalising the $\hat A$-genus. If $G$ is a connected Lie group, then these obstructions vanish under a mild assumption, as shown in \cite{HM16}. In that case, we construct  $G$-invariant Riemannian metrics with uniformly positive scalar curvature, under an equivariant bounded geometry assumption.

Our most general obstruction result is the following.
\begin{theorem}\label{thm obstr psc intro}
Let $H\subset M$ be a $G$-invariant, cocompact hypersurface with trivial normal bundle,  that partitions $M$ into two open sets. If $M$ admits a complete, $G$-invariant Riemannian metric with nonnegative scalar curvature, and positive scalar curvature in a neighbourhood of $H$, then 
\[
\ind_G(D^H) = 0 \quad \in K_*(C^*(G)),
\]
for a $\Spin$-Dirac operator $D^H$ on $H$.
\end{theorem}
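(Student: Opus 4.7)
The plan is to deduce the vanishing from the equivariant Callias-type index theorem advertised in the introduction, via a Lichnerowicz-type positivity argument of Gromov--Lawson type. Since $H$ is $G$-invariant, cocompact, and has trivial normal bundle, it admits a $G$-invariant collar $H \times (-r, r) \subset M$; we arrange this collar to lie inside the open neighbourhood of $H$ on which the scalar curvature $\kappa$ is bounded below by some $\kappa_0 > 0$.

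Let $D^M$ denote the $\Spin$-Dirac operator on $M$ and choose a $G$-invariant smooth function $f \colon M \to \R$ that equals $\lambda$ on one side of $H$ outside the collar, $-\lambda$ on the other, and interpolates smoothly inside, for a small constant $\lambda > 0$ to be fixed. Form the doubled Callias-type operator
\[
\mathcal{D} \coloneqq D^M \otimes \sigma_1 + f \otimes \sigma_2
\]
on $S^M \otimes \C^2$. By the Callias-type index theorem of this paper, the equivariant index of $\mathcal{D}$ in $K_*(C^*(G))$ equals $\ind_G(D^H)$. A direct computation using the Lichnerowicz formula yields
\[
\mathcal{D}^2 = \nabla^*\nabla + \tfrac{\kappa}{4} + f^2 + c(\grad f) \otimes i\sigma_3.
\]
Outside the collar $\grad f = 0$ and $f^2 = \lambda^2$, so $\mathcal{D}^2 \geq \lambda^2$ using $\kappa \geq 0$. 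Inside the collar, $\kappa/4 \geq \kappa_0/4$ while the norm of $c(\grad f) \otimes i\sigma_3$ is bounded by $C\lambda/r$ for some constant $C$ controlling the transition profile of $f$; choosing $\lambda < \kappa_0 r/(4C)$ makes $\mathcal{D}^2$ uniformly positive on all of $M$. Hence $\mathcal{D}$ is invertible, its equivariant index vanishes, and therefore $\ind_G(D^H) = 0$.

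The conceptual picture is fairly clean, so the main obstacles are technical rather than structural. First, one must confirm that $f$ qualifies as an admissible Callias potential in the precise sense required by the paper's index theorem, and that the resulting operator $\mathcal{D}$ falls within the hypotheses under which that theorem identifies its index with $\ind_G(D^H)$. Second, one must verify that uniform positivity of $\mathcal{D}^2$ really forces the equivariant index class of $\mathcal{D}$ to vanish in the noncompact, proper-action setting; this will rest on $\mathcal{D}$ being essentially self-adjoint and regular as an unbounded operator on the relevant Hilbert $C^*(G)$-module, for which completeness of the metric is essential. Finally, the construction of the collar and of $f$ must be carried out $G$-equivariantly, which is routine given properness of the action and cocompactness of $H$, but needs to be made explicit to keep the spectral estimate uniform in $G$-translates.
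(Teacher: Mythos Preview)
Your proposal is correct and follows essentially the same route as the paper: construct a small Callias potential that is $\pm\lambda$ on the two sides of $H$, use Lichnerowicz to get uniform positivity of the square (nonnegative $\kappa$ handles the region where $f^2=\lambda^2$, and $\kappa\geq\kappa_0>0$ absorbs $c(\grad f)$ on the transition region for small $\lambda$), conclude invertibility and hence vanishing of the Callias index, then invoke the equivariant Callias-type index theorem to identify that index with $\ind_G(D^H)$. The paper's only additional wrinkle is a parity reduction you omit: the specific form of the Callias theorem used (with $S=S_0\oplus S_0$ for an ungraded $S_0$) fits the odd-dimensional case directly, and the even-dimensional case is handled by passing to $M\times S^1$.
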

See Theorem \ref{thm obstr psc}. In the case where $M$ is the universal cover of a manifold $X$ and $G$ is its fundamental group, this becomes Theorem A in \cite{Cecchini16}.

Theorem \ref{thm obstr psc intro} implies topological obstructions to $G$-invariant Riemannian metrics with positive scalar curvature. Let $g \in G$, and let $H^g \subset H$ be the fixed point set of $g$. Let $\cN \to H^g$ be its normal bundle in $H$, and let $R^{\cN}$ be the curvature of the Levi--Civita connection restricted to $\cN$. The \emph{$g$-localised $\hat A$-genus} of $H$ is
\[
\hat A_g(H) \coloneqq \int_{H^g} c^g \frac{\hat A(H^g) }{\det(1-g e^{-R^{\mathcal{N}}/2\pi i})^{1/2}},
\]
for a cutoff function $c^g$ on $X^g$. If $G$ acts freely, then $\hat A_g(H) = 0$ if $g\neq e$, and $\hat A_e(H)$ is the $\hat A$-genus of $H/G$. In general, for example in the orbifold case, $\hat A_g(H)$ may be nonzero for different $g$.
\begin{corollary} \label{cor obstr psc intro}
Consider the setting of Theorem \ref{thm obstr psc intro}.
Suppose that either
\begin{itemize}
\item $G$ is any locally compact group and $g = e$;
\item $G$ is discrete and finitely generated and $g$ is any element; or
\item $G$ is a connected semisimple Lie group and $g$ is a semisimple element.
\end{itemize}
Then $\hat A_g(H)$ is independent of the choice of $G$-invariant Riemannian metric, and $\hat A_g(H) = 0$.
\end{corollary}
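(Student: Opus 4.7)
The plan is to deduce the corollary from Theorem \ref{thm obstr psc intro} by realising $\hat A_g(H)$ as the value of an appropriate trace on the $K$-theoretic index $\ind_G(D^H) \in K_*(C^*(G))$. The main step will be, for each of the three cases, to produce a trace $\tau_g$ on a suitable dense subalgebra of $C^*(G)$ containing $\ind_G(D^H)$, together with a fixed-point formula of the form
\[
\tau_g(\ind_G(D^H)) = \hat A_g(H).
\]
Once this is available, metric independence of $\hat A_g(H)$ is automatic: the right-hand side is a $K$-theoretic invariant, and the class $\ind_G(D^H) \in K_*(C^*(G))$ is independent of the choice of $G$-invariant Riemannian metric on $H$ (two metrics can be joined by a path, giving a homotopy of $G$-equivariantly Fredholm $\Spin$-Dirac operators since $H/G$ is compact). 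The vanishing $\hat A_g(H)=0$ then follows by applying $\tau_g$ to the equality $\ind_G(D^H)=0$ supplied by Theorem \ref{thm obstr psc intro}.

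To produce the required trace and fixed-point formula in each case, I would proceed as follows. In case (a) with $g=e$, take $\tau_e$ to be the canonical trace on $C^*(G)$ associated to a $G$-invariant cut-off function on $H$; the fixed-point formula is then the cocompact version of the $L^2$-index theorem for proper actions of locally compact groups, whose right-hand side is the integral of $c\,\hat A(TH)$ over $H$, i.e.\ $\hat A_e(H)$. In case (b), take $\tau_g$ to be the delocalised trace on $\mathbb{C}[G]$ defined by summation over the conjugacy class of $g$; in the finitely generated discrete setting this extends to a holomorphically closed smooth subalgebra of $C^*(G)$ containing the index, and the corresponding delocalised index theorem equates $\tau_g(\ind_G(D^H))$ with the Atiyah-Segal-Singer integral over the fixed-point set $H^g$, which is $\hat A_g(H)$. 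In case (c), take $\tau_g$ to be the Harish-Chandra orbital integral over the $G$-conjugacy class of the semisimple element $g$, viewed as a continuous trace on the Harish-Chandra Schwartz algebra of $G$; the required identification of $\tau_g(\ind_G(D^H))$ with $\hat A_g(H)$ is then essentially the orbital-integral index formula for proper cocompact actions of semisimple Lie groups.

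The main obstacle will be Step 1 in each case, namely producing a subalgebra of $C^*(G)$ on which $\tau_g$ is defined, which contains the index, and on which the fixed-point formula can be proved. Case (a) is the most straightforward, since $\tau_e$ extends to all of $C^*(G)$. Case (b) requires polynomial growth control on the conjugacy class of $g$ to extend $\tau_g$ to an appropriate Fr\'echet subalgebra and invoke the delocalised index formula. Case (c) is the deepest: the semisimplicity hypothesis on $g$ is needed to guarantee convergence of the orbital integral on the Harish-Chandra Schwartz algebra, and the fixed-point formula itself rests on harmonic analysis of tempered representations of $G$.
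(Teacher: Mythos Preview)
Your approach is correct and essentially identical to the paper's: the paper also deduces the corollary from Theorem~\ref{thm obstr psc intro} by invoking, in each of the three cases, an existing index formula $\tau_g(\ind_G(D^H)) = \hat A_g(H)$ for the orbital-integral trace $\tau_g$ (citing Wang's $L^2$-index theorem for $g=e$, the delocalised index theorem of Wang--Wang for discrete $G$, and the orbital-integral formula of Hochs--Wang for semisimple $G$), and then noting metric-independence of $\ind_G(D^H)$. Your discussion of the analytic subtleties in cases (b) and (c) is apt, though the paper simply defers those to the cited references rather than re-proving them.
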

See Theorem \ref{cor obstr psc}.

Theorem 2 in \cite{HM16} is a generalisation of Atiyah and Hirzebruch's vanishing result \cite{Atiyah70b} to the noncompact case. It states that, if $G$ is connected, 
%(\Todo: state for almost-connected? If so, this might be a place to define the term almost-connected.) 
and not every stabiliser of its action on $H$ is maximal compact, then $\ind_G(D^H) = 0$. This implies that $\hat A_g(H) = 0$ as well. 
%The results in \cite{HM16} are stated for connected groups for simplicity, but generalise immediately to almost-connected groups $G$, i.e.\ groups with finitely many connected components. (See below Corollary \ref{cor obstr psc} for details on this generalisation.)
In view of Theorem \ref{thm obstr psc intro} and Corollary \ref{cor obstr psc intro}, this makes it a natural question if $M$ admits a $G$-invariant Riemannian metric with positive scalar curvature if $G$ is a connected Lie group. The answer, given in the present paper, turns out to be \emph{yes} under a certain equivariant bounded geometry assumption.

Suppose that $G$ is a connected Lie group, and let $K<G$ be a maximal compact subgroup.
Abels' slice theorem \cite{Abels} implies that there is a diffeomorphism $M \cong G\times_K N$, for a $K$-invariant submanifold $N \subset M$. Consider the infinitesimal action map 
\[
\varphi\colon N \times \kk \to TN
\]
mapping $(y,X) \in N \times \kk$ to $\ddt \exp(tX)y$. The action by $K$ on $N$ is said to have \emph{no shrinking orbits} with respect to a $K$-invariant Riemannian metric on $N$, if the pointwise operator norm of $\varphi$ as a map from $\kk$ to a tangent space is uniformly positive outside a neighbourhood of the fixed point set $N^K$. We say that $N$ has \emph{$K$-bounded geometry} if it has bounded geometry and no shrinking orbits.

\begin{theorem}\label{thm exist intro}
Suppose that  $K$ is non-abelian, and that $K$ acts effectively on $N$ with compact fixed point set. If there exists a $K$-invariant Riemannian metric on $N$ for which $N$ has $K$-bounded geometry, then the manifold \mbox{$G \times_K N$} admits a $G$-invariant metric with uniformly positive scalar curvature.
\end{theorem}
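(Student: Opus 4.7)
The plan is to adapt the Lawson--Yau construction of positive-scalar-curvature metrics to the equivariant noncompact setting of $M = G \times_K N$. The geometric input is that, since $K$ is compact and non-abelian, a bi-invariant metric on $K$ has strictly positive scalar curvature; rescaling the $K$-orbit directions on $N$ by a small parameter $\epsilon$ amplifies this positivity as $\epsilon^{-2}$, while the $K$-bounded geometry of $N$ uniformly controls competing curvature contributions. The compactness of $N^K$ then allows the fixed-point region to be handled by a separate Gromov--Lawson-style warped-product construction and glued in.

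First, I would fix a $K$-invariant inner product on $\kp = \kg \ominus \kk$ (which determines a $G$-invariant background metric on $G/K$) and a bi-invariant inner product $B$ on $\kk$. For $\epsilon > 0$ deform $g_N$ to $g_N^\epsilon$ by rescaling, at each $y \in N$, the tangent direction to the $K$-orbit $T_y(Ky) \cong \kk/\kk_y$ to $\epsilon^2 B$, while leaving the $g_N$-orthogonal complement fixed. Combined with the fixed data on $G/K$, this induces a $G$-invariant metric $g_M^\epsilon$ on $M$. Applying O'Neill's Riemannian submersion identities to the (regular part of the) orbit foliation on $(N, g_N^\epsilon)$ and to $G \times N \to M$ yields a decomposition of $\mathrm{scal}(g_M^\epsilon)$ of the form
\begin{equation*}
\mathrm{scal}(g_M^\epsilon)(y) \;=\; \epsilon^{-2}\, S_{\mathrm{orb}}(y) \;+\; S_{\mathrm{rest}}(y, \epsilon),
\end{equation*}
where $S_{\mathrm{orb}}(y)$ is the scalar curvature of $K/K_y$ in the bi-invariant metric from $B$, and $S_{\mathrm{rest}}$ collects scalar curvatures of $G/K$ and of the $g_N$-normal direction together with O'Neill correction terms. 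The $K$-bounded geometry assumption bounds $S_{\mathrm{rest}}$ uniformly for $\epsilon \in (0,1]$; non-abelianness of $K$ combined with the no-shrinking-orbits condition (a uniform lower bound on the operator norm of $\varphi$ away from a neighborhood of $N^K$) gives a uniform positive lower bound on $S_{\mathrm{orb}}$ on $N \setminus U$ for a suitable $K$-invariant neighborhood $U$ of $N^K$. Thus for $\epsilon$ sufficiently small, $\mathrm{scal}(g_M^\epsilon)$ is uniformly positive on $M \setminus (G \times_K U)$.

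Second, on a $K$-equivariant tubular neighborhood $U$ of $N^K$, identified with a disc subbundle of the normal bundle $\cN \to N^K$, I would replace $g_N^\epsilon$ by a separate $K$-invariant metric of positive scalar curvature. Effectiveness forces the slice representation on each fiber of $\cN$ to be faithful, and since $K$ is non-abelian the unit sphere bundle $S(\cN)$ carries a $K$-invariant metric of uniformly positive scalar curvature by applying the previous argument fiberwise (with uniformity coming from compactness of $N^K$). A Gromov--Lawson warped product of the form $dr^2 + f(r)^2 g_{S(\cN)}$ then extends this radially through the zero section of $\cN$ and matches $g_N^\epsilon$ along $\partial U$ for suitable $f$. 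Combining this metric with the fixed $K$-invariant inner product on $\kp$ via the standard construction yields a $G$-invariant metric on $G \times_K U$ whose positive scalar curvature dominates $|\mathrm{scal}(G/K)|$ once the disc radius is taken small enough. A partition-of-unity interpolation in a collar of $\partial(G \times_K U)$ then glues the two metrics to a single $G$-invariant metric on $M$ with uniformly positive scalar curvature.

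The main obstacle is the interplay between the noncompactness of the fixed-point locus $G/K \times N^K \subset M$ and the demand for \emph{uniform} positivity. Even though $N^K$ is compact, the scalar curvature of $G/K$ need not be positive for a general connected $G$, so the sphere-bundle construction near $N^K$ must be calibrated so that its positive curvature dominates $|\mathrm{scal}(G/K)|$ uniformly along the noncompact $G/K$ factor. Simultaneously choosing $\epsilon$, the disc radius, the warping function $f$, and the collar cutoff so that uniform positive scalar curvature survives the interpolation --- while the result remains $G$-invariant and matches $g_M^\epsilon$ outside $U$ --- is the technical heart of the construction.
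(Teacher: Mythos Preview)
Your proposal is broadly on the right track --- the Lawson--Yau rescaling, the role of $K$-bounded geometry in controlling the remainder, and the separate treatment of a neighbourhood of $N^K$ are all the correct ingredients --- but the paper organises the argument quite differently, and its organisation avoids precisely the obstacle you flag at the end.

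The paper does \emph{not} work directly on $M = G\times_K N$. Instead it first proves a purely $K$-equivariant statement (Theorem~\ref{thm:noncptlawsonyau}): $N$ itself carries a $K$-invariant metric of uniformly positive scalar curvature. For the region away from $N^K$ this is exactly the Lawson--Yau estimate, made uniform by the $K$-bounded geometry hypothesis (Proposition~\ref{prop:easyextension}). For the neighbourhood of $N^K$, rather than building a new warped-product metric on the sphere bundle as you propose, the paper simply invokes Section~4 of Lawson--Yau \cite{Lawson74}, which already produces a $K$-invariant metric $g'$ whose canonical variation $\tilde g'_t$ has positive scalar curvature near $N^K$; compactness of $N^K$ is all that is needed to quote this. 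The two metrics are then blended on $N$ by a $K$-invariant partition of unity \emph{before} forming the canonical variation, so no delicate matching of warping profiles is required. Only after a uniformly positive $K$-invariant metric on $N$ is in hand does the paper pass to $G\times_K N$, and it does so by a black-box induction result (Theorem~4.6 in \cite{GHM1}, see also \cite{GMW}): a $K$-invariant metric of uniformly positive scalar curvature on $N$ induces a $G$-invariant one on $G\times_K N$.

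This two-step decomposition is what buys you freedom from the $G/K$ scalar-curvature issue you correctly identify as the main obstacle. In your approach you must arrange, simultaneously and by hand, that the positive curvature coming from the $K$-orbits dominates both the $N$-remainder and $|\mathrm{scal}(G/K)|$ uniformly, including through the interpolation collar. In the paper's approach the $G/K$ contribution is absorbed once and for all into the induction theorem from \cite{GHM1}, and the entire geometric analysis happens on the slice $N$ with a compact group acting. Your route is not wrong in principle, but it re-derives part of that induction result inside the proof and makes the fixed-point step harder than necessary; the paper's separation of concerns is cleaner.
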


In the compact case, the Atiyah--Hirzebruch vanishing theorem \cite{Atiyah70b} implies that the obstruction $\hat A(N)$ to Riemannian metrics of positive scalar curvature vanishes if $K$ acts nontrivially on $N$. Then Lawson and Yau \cite{Lawson74} constructed such metrics, under mild conditions. In a similar way, Theorem \ref{thm exist intro} complements the vanishing result in \cite{HM16}.

%Theorem \ref{thm exist intro} extends to the case where $G$ is almost-connected; i.e.\ has finitely many connected components, since Abels' slice theorem extends to that case. We do not work out the details of this extension in this paper, however, and assume that $G$ is connected for simplicity.

\subsection*{A Callias-type index theorem}

Two effective sources of index-theoretic obstructions to metrics of positive scalar curvature on noncompact manifolds are coarse index theory and Callias-type index theory. For some results involving coarse index theory, see for example \cite{Schick18} and the literature on the coarse Novikov conjecture, in particular
 \cite{FWY19} for the equivariant setting we are interested in here.
We will use Callias-type index theory.

Not assuming that $M$ is $\Spin$ for now, and letting $G$ be any locally compact group, we consider a $G$-equivariant Dirac-type operator $D$ on a $G$-equivariant vector bundle $S \to M$. A \emph{Callias-type operator} is of the form $D + \Phi$, for a $G$-equivariant endomorphism  $\Phi$ of $S$ such that $D + \Phi$ is uniformly positive outside a cocompact set. Then this operator has an index $\ind_G(D + \Phi) \in K_*(C^*(G))$, constructed in \cite{Guo18}. (See Theorem 4.2 in \cite{GHM1} for a realisation of this index in terms of coarse geometry.)
\begin{theorem}[$G$-Callias-type index theorem] \label{thm ind intro}
We have
\[
\ind_G(D + \Phi) = \ind_G(D_N),
\]
for a Dirac operator $D_N$ on a $G$-invariant, cocompact hypersurface $N \subset M$. 
\end{theorem}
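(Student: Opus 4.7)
The plan is to prove Theorem \ref{thm ind intro} by first establishing that the $G$-Callias index is invariant under a suitable class of homotopies of the potential $\Phi$, then deforming $\Phi$ to a standard model on a tubular neighbourhood of the hypersurface $N$, and finally identifying the index of this model with $\ind_G(D_N)$ via a Kasparov product computation. The guiding intuition is that, once $D+\Phi$ is uniformly positive outside a $G$-cocompact set $Z$, the index-theoretic information is concentrated near the region where $\Phi$ fails to dominate $D$; any $G$-cocompact hypersurface $N$ separating this region from the ``large $\Phi$'' region should serve as a cross-section recording the full index.

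As a first step, I would prove a \emph{homotopy invariance} lemma: given a continuous path $(\Phi_s)_{s\in[0,1]}$ of $G$-equivariant endomorphisms of $S$, all of whose operators $D+\Phi_s$ are uniformly positive outside the same $G$-cocompact set and share uniform bounds on $[D,\Phi_s]$, the class $\ind_G(D+\Phi_s) \in K_*(C^*(G))$ is independent of $s$. Using the realisation of $\ind_G$ from \cite{Guo18}, this should reduce to operator-homotopy invariance of the representing Kasparov cycle, combined with a cocompact cut-off argument to ensure that the resolvent differences along the path stay in the ideal used to define the index.

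Next, I would choose $N$ so that it has trivial normal bundle, is contained in the region where $\Phi$ is already invertible, and bounds a $G$-cocompact region of $M$ on one side. Identifying a tubular neighbourhood with $N\times(-\varepsilon,\varepsilon)$ and extending it to a cylinder $N\times\R$ inside $M$, I would use the homotopy invariance step to deform $\Phi$ on this cylinder to a product model $\Phi(y,t) = f(t)\,c$, where $c$ is a self-adjoint $G$-equivariant involution on $S|_N$ anticommuting with Clifford multiplication by the normal direction, and $f$ is an odd function with $|f(t)|\to\infty$ as $|t|\to\infty$. A similar product reduction applies to $D$, writing it on the cylinder as $c(\partial_t)\partial_t + D_N$ modulo a controlled perturbation that can be absorbed along the homotopy.

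With the model in place, the operator on $N\times\R$ factors as a Kasparov product of the Bott-type operator $c(\partial_t)\partial_t + f(t)c$ in the normal direction with $D_N$ on $N$; the former represents the generator of $KK(\C,\C)$ under the suspension isomorphism, while the latter represents $[D_N]\in KK^G(C_0(N),\C)$. Assembling and descending to $K_*(C^*(G))$ then yields the identity $\ind_G(D+\Phi) = \ind_G(D_N)$. The step I expect to be most delicate is the reduction to the product model: one must keep $D+\Phi_s$ uniformly positive outside $Z$ while modifying $\Phi$ on a $G$-invariant but non-cocompact tube, which requires a $G$-equivariant cutoff adapted to the proper action and careful control of $[D,\chi]$ for the cutoff $\chi$ on $M/G$, especially along orbits with varying isotropy meeting $N$.
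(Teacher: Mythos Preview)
Your overall architecture---homotopy invariance, reduction to a cylinder, then a product computation---matches the paper's, but there is a genuine gap at the reduction step. You propose to ``extend the tubular neighbourhood to a cylinder $N\times\R$ inside $M$'' and then deform $\Phi$ to a product model by homotopy. But $M$ is not in general a cylinder outside a cocompact set, so no such $N\times\R$ sits inside $M$, and no homotopy of the potential on the fixed manifold $M$ will change the underlying geometry of $M_+$. You acknowledge this tension yourself, noting that the deformation must happen on a ``non-cocompact tube'' while preserving positivity outside $Z$; the difficulty here is not merely delicate but an actual obstruction: your homotopy lemma, as stated, only covers paths that stay uniformly positive outside a \emph{fixed} cocompact set, and there is no such path from the original $\Phi$ to a model growing like $|t|$ on an infinite cylinder unless $M$ already has cylindrical ends.

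The paper resolves this with a tool absent from your outline: a \emph{relative index theorem} of Bunke type (Theorem~\ref{thm rel index} and Corollary~\ref{cor 1234}), proved in the Hilbert $C^*(G)$-module setting using the equivariant Rellich lemma. This cut-and-glue result says that if two $G$-Callias operators agree near a separating hypersurface and their positivity defects are cocompact, their indices agree. With it, the paper replaces the non-cocompact side $M_+$ by a genuine half-cylinder $N\times[1,\infty)$, obtaining a new manifold $M_C$; further cutting, gluing, and homotopies (some of which, as in Lemma~\ref{lem DC tilde}, require the homotopy invariance in a form more general than Callias potentials) then pass to the full cylinder $N\times\R$. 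Only after the underlying manifold has been changed to $N\times\R$ does the product decomposition go through; the paper carries it out by an explicit kernel computation rather than an abstract Kasparov product, but your version of that last step would also work. The missing ingredient, then, is an excision step that changes $M$, not merely $\Phi$.
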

See Theorem \ref{thm index}. Versions of this result where $G$ is trivial were proved in \cite{Anghel93, Bott78, Bunke95, Callias78, Kucerovsky01}. Versions for operators on bundles of modules over operator algebras were proved in  \cite{Braverman18, Cecchini16}. Parts of our proof of Theorem \ref{thm ind intro} are based on a similar strategy as the proof of the index theorem in \cite{Cecchini16}.

We deduce Theorem \ref{thm obstr psc} from Theorem \ref{thm ind intro}. This approach is an equivariant generalisation of the obstructions to metrics of positive scalar obtained in \cite{Anghel93, Bunke95, Cecchini16}.

If $g \in G$, then under conditions, there is a subalgebra $A(G) \subset C^*(G)$ such that $K_*(A(G)) = K_*(C^*(G))$, and there is a well-defined trace $\tau_g$ on $A(G)$ given by
\[
\tau_g(f) = \int_{G/Z} f(hgh^{-1})d(hZ),
\]
where $Z$ is the centraliser of $g$. In various settings, including the three cases in Corollary \ref{cor obstr psc intro}, there are index formulas for the number $\tau_g(\ind_G(D))$, see \cite{HW2, Wang14, Wangwang}. These index formulas imply that, in the setting of Theorem \ref{thm obstr psc intro},
\[
\tau_g(\ind_G(D^H)) = \hat A_g(H).
\]
Hence Theorem \ref{thm obstr psc intro} implies Corollary \ref{cor obstr psc intro}.

Theorem \ref{thm exist intro} is proved via a generalisation of Lawson and Yau's arguments \cite{Lawson74}, together with a result from \cite{GHM1} that allows one  to induce up metrics of positive scalar curvature from $N$ to $M = G\times_K N$.

Apart from using Theorem \ref{thm ind intro} to prove Theorem \ref{thm obstr psc}, we obtain some further applications, on the image (Corollary \ref{cor im mu}) and cobordism invariance (Corollary \ref{cor cobord}) of the analytic assembly map; on the Callias-type index of $\Spinc$-Dirac operators  (Corollary \ref{cor Spinc}); on induction of Callias-type indices from compact groups to noncompact groups (Corollary \ref{cor Ind}); and on the $\Spinc$-version \cite{Paradan17} of the \emph{quantisation commutes with reduction} problem \cite{Guillemin82, Meinrenken98, Paradan01, Zhang98} for $\Spinc$-Callias type operators (Corollary \ref{cor quant}).

\subsection*{Outline of this paper}

We state our obstruction and existence results in Section \ref{sec results}: Theorem \ref{thm obstr psc}, Corollary \ref{cor obstr psc} and Theorem \ref{thm:noncptlawsonyauG}.
In Section \ref{sec Callias}, we state the equivariant Callias-type index theorem, Theorem \ref{thm index}. Theorem \ref{thm index} is proved in Sections \ref{sec prop ind} and \ref{sec pf ind}. We then deduce Theorem \ref{thm obstr psc} and Corollary \ref{cor obstr psc} in Subsection \ref{sec pf obstr}. Theorem \ref{thm:noncptlawsonyauG} is proved in Subsection \ref{sec pf exist}. In Section \ref{sec appl}, we obtain some further applications of the Callias-type index theorem, Corollaries \ref{cor im mu}--\ref{cor quant}.

\subsection*{Acknowledgements}

HG was supported in part by funding from the National Science Foundation under grant no.\ 1564398.
PH thanks Guoliang Yu and Texas A{\&}M University for their hospitality during a research visit. 
VM was supported by funding from the Australian Research Council, through the Australian Laureate Fellowship FL170100020.

\subsection*{Notation and conventions}

All manifolds, vector bundles, group actions and other maps between manifolds are implicitly assumed to be smooth. If a Hilbert space $H$ is mentioned, the inner product on that space will be denoted by $(\relbar, \relbar)_H$, and the corresponding norm by $\|\cdot \|_H$. Spaces of continuous sections are denoted by $\Gamma$; spaces of smooth sections by $\Gamma^{\infty}$. Subscripts $c$ denote compact supports.

If $G$ is a group, and $H$ is a subgroup acting on a set $S$, then we write $G \times_H S$ for the quotient of $G \times S$ by the $H$-action given by
\[
h\cdot(g, s) = (gh^{-1}, hs),
\]
for $h \in G$, $g \in G$ and $s \in S$. If $S$ is a manifold, $G$ is  Lie group and $H$ is a closed subgroup, then this action is proper and free, and $G\times_H S$ is a manifold.

A continuous group action, and also the space acted on, is said to be \emph{cocompact} if the quotient space is compact.

\section{Results on positive scalar curvature} \label{sec results}

In all parts of this paper except Subsections \ref{sec exist} and \ref{sec pf exist}, which concern the existence results, $G$ will be be a locally  group, acting properly on a manifold $M$. We do not assume that $M/G$ is compact and are in fact interested mainly in the case where it is not.
The group $G$ may have infinitely many connected components, and for may for example be an infinite discrete group.

\subsection{Obstructions}

For a proper, cocompact action by $G$ on a manifold $N$, a $G$-equivariant elliptic differential operator $D$ on $N$ has an equivariant index $\ind_G(D) \in K_*(C^*(G))$ defined by the analytic assembly map \cite{Connes94}. Here $C^*(G)$ is the maximal or reduced group $C^*$-algebra of $G$, and the index takes values in its even $K$-theory if $D$ is odd with respect to a grading, and in odd $K$-theory otherwise.

Our most general obstruction result is the following.
\begin{theorem}\label{thm obstr psc}
Let $M$ be a complete Riemannian $\Spin$-manifold, on which a locally compact group $G$ acts properly and isometrically. Let $H \subset M$ be a $G$-invariant, cocompact hypersurface with trivial normal bundle,  such that $M \setminus H = X \cup Y$ for disjoint open subsets $X,Y \subset M$. %Then $H$ has a $G$-equivariant $\Spin$-structure. 
If the scalar curvature on $M$ is nonnegative, and positive in a neighbourhood of $H$, then the  $\Spin$-Dirac operator $D^H$ on $H$, acting on sections of the restriction of the spinor bundle on $M$ to $H$, satisfies
\[
\ind_G(D^H) = 0 \quad \in K_*(C^*(G)).
\]
\end{theorem}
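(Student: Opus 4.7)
The plan is to apply Theorem \ref{thm index} to a carefully constructed $G$-equivariant Callias-type operator on $M$ whose index vanishes for purely analytic reasons; the Callias index theorem will then identify this vanishing index with $\ind_G(D^H)$.

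First I would set up the operator. Let $S \to M$ be the equivariant spinor bundle with spin Dirac operator $D^M$, and form the doubled bundle $\tilde S = S \otimes \C^2$ with Dirac operator $\tilde D = D^M \otimes \sigma_1$, where $\sigma_1, \sigma_2, \sigma_3$ denote the Pauli matrices. Since the normal bundle of $H$ is trivial and $G$-equivariant, fix a $G$-invariant tubular neighbourhood $U \cong H \times (-2\varepsilon, 2\varepsilon)$ with normal coordinate $t$, oriented so that $\{t > 0\} \subset X$. By cocompactness of $H$, one may shrink $\varepsilon$ so that the scalar curvature $\kappa$ satisfies $\kappa \geq \kappa_0 > 0$ throughout $U$. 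Pick a smooth odd function $\chi : \R \to \R$ with $\chi(t) = \operatorname{sgn}(t)$ for $|t| \geq \varepsilon$, and let $f : M \to \R$ be the $G$-invariant extension equal to $\chi(t)$ on $U$ and to $\pm 1$ on $X \setminus U$, $Y \setminus U$. Set $\Phi = \rho \, f \otimes \sigma_2$ for a small parameter $\rho > 0$ to be fixed. Both $\tilde D$ and $\Phi$ are $G$-equivariant, formally self-adjoint, and odd for the grading $1 \otimes \sigma_3$ on $\tilde S$.

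A direct computation using $\sigma_1 \sigma_2 = i\sigma_3 = -\sigma_2\sigma_1$ and the Lichnerowicz formula $(D^M)^2 = \nabla^*\nabla + \kappa/4$ gives
\[
(\tilde D + \Phi)^2 = \Bigl(\nabla^*\nabla + \frac{\kappa}{4} + \rho^2 f^2\Bigr) \otimes 1 + \rho \, c(df) \otimes i\sigma_3,
\]
where $c(df)$ denotes Clifford multiplication by the one-form $df$. Outside $U$ the cross term vanishes and $\Phi^2 = \rho^2$, so $(\tilde D + \Phi)^2 \geq \rho^2$, which is the Callias positivity condition and guarantees that $\ind_G(\tilde D + \Phi)$ is defined. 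Inside $U$ the self-adjoint endomorphism $c(df) \otimes i\sigma_3$ has eigenvalues $\pm |df|$, bounded in absolute value by $\|\chi'\|_\infty$, while $\kappa/4 + \rho^2 f^2 \geq \kappa_0/4$; choosing $\rho < \kappa_0/(4\|\chi'\|_\infty)$ then forces $(\tilde D + \Phi)^2$ to be strictly positive throughout $U$, and hence on all of $M$. Consequently $\tilde D + \Phi$ is invertible and $\ind_G(\tilde D + \Phi) = 0$. Applying Theorem \ref{thm index} with the zero-locus hypersurface of $\Phi$, which is precisely $H$, and identifying the induced boundary Dirac operator with $D^H$ via the standard splitting of $\tilde S|_H$ by the eigendecomposition of $i c(\nu) \otimes \sigma_2$, then yields $\ind_G(D^H) = 0$.

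The main obstacle will be the pointwise positivity estimate inside the tubular neighbourhood: the cross term $\rho\, c(df) \otimes i\sigma_3$ carries no a priori sign and must be dominated by $\kappa/4$ at points where $f$ is small. This is precisely where the hypothesis that $\kappa$ be strictly positive on an entire neighbourhood of $H$, and not merely on $H$ itself, becomes essential. A secondary technical issue will be verifying, by tracing through the proof of Theorem \ref{thm index} applied to the explicit $\Phi$ above, that the boundary Dirac operator it extracts agrees with $D^H$, including grading and sign conventions.
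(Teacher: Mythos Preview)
Your approach is essentially identical to the paper's: both double the spinor bundle, take the Callias potential to be a scalar function transitioning from $-1$ to $+1$ across $H$, use the Lichnerowicz formula together with a small coupling constant to make the resulting Callias operator uniformly invertible, and then invoke Theorem~\ref{thm index}. The paper's $D$ and $\Phi$ in the matrix form \eqref{eq D D0}, \eqref{eq Phi Phi0} are exactly your $D^M\otimes\sigma_1$ and (up to an irrelevant sign) $\rho f\otimes\sigma_2$.

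Two small points to tighten. First, Theorem~\ref{thm index} does not literally use the zero set of $\Phi$ as its hypersurface; it uses $N=\partial M_-$ for some cocompact $M_-$ containing the region where the Callias estimate fails. You must therefore choose $M_-$ and then check that $S^N_+$ is (the pullback of) $S_0|_H$: the paper does this by taking $\chi\equiv 1$ on all of $Y$, so that $H$ itself is a boundary component of $M_-$ with $\Phi_0|_H>0$, while any remaining boundary component $N^-$ sits where $\Phi_0=-1$ and hence contributes nothing to $S^N_+$. Second, your argument as written only covers $\dim M$ odd, since the doubling \eqref{eq D D0}--\eqref{eq Phi Phi0} presupposes an ungraded $S_0$; the paper handles the even-dimensional case in one line by passing to $M\times S^1$.
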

We will deduce this result from an equivariant index theorem for Callias-type operators, Theorem \ref{thm index}, which may be of independent interest and has some other applications as well.

\begin{remark} \label{rem Cecchini}
If $M$ is the universal cover of a manifold $X$, and $G$ is the fundamental group of $X$ acting on $M$ in the usual way, then Theorem \ref{thm obstr psc} reduces to Theorem A in \cite{Cecchini16}, by the Mi\v{s}\v{c}enko--Fomenko realisation of the analytic assembly map in that case \cite{Miscenko79}.
\end{remark}

Theorem \ref{thm obstr psc} implies a set of topological obstructions to $G$-invariant positive scalar curvature metrics on $M$. Let $X$ be any Riemannian manifold on which $G$ acts properly, isometrically and cocompactly. Let $g \in G$, and let $X^g \subset X$ be its fixed point set. (Properness of the action implies that $X^g = \emptyset$ if $g$ is not contained in a compact subset of $G$.)
Let $\mathcal{N} \to X^g$ be the normal bundle to $X^g$ in $X$. The connected components of $X^g$ are submanifolds of $X$ of possibly different dimensions, so the rank of $\mathcal{N}$ may jump between these components. In what follows, we implicitly apply all constructions to the connected components of $X^g$ and add the results together.

By a {\it cutoff function} we will mean a smooth function ${\cut}\colon M\rightarrow [0,1]$ such that $\supp({\cut})$ has compact intersection with each $G$-orbit, and  for each $x\in M$ we have
	$$\int_G{\cut}(g^{-1}x)\,dg = 1.$$
We will also use cutoff functions for other group actions, which are defined analogously.

Let $R^{\mathcal{N}}$ be the curvature of the Levi--Civita connection restricted to $\mathcal{N}$. Let $\hat A(X^g)$ be the $\hat A$-class of $X^g$. Let $Z_G(g) < G$ be the centraliser of $g$. Let $c^g$ be a cutoff function for the action by $Z_G(g)$ on $X^g$.
%Fix a Haar measure $dz$ on $Z_G(g)$, and let $c^g$ be a compactly supported, continuous  function on $X^g$ such that for all $x \in X^g$,
%\[
%\int_{Z_G(g)} c^g(zx)\, dx = 1.
%\]
\begin{definition}
The \emph{$g$-localised $\hat A$-genus of $X$} is
\[
\hat A_g(X) \coloneqq \int_{X^g} c^g \frac{\hat A(X^g) }{\det(1-g e^{-R^{\mathcal{N}}/2\pi i})^{1/2}}.
\]
\end{definition}
If $g=e$, then 
\[
\hat A_e(X) = \int_{X} c^e \hat A(X) 
\]
 is the $L^2$-$\hat A$ genus of $X$ used in \cite{Wang14}. If $G$ is discrete and acts properly and freely on $X$, then $\hat A_e(X) = \hat A(X/G)$. 
\begin{corollary} \label{cor obstr psc}
Suppose that either
\begin{itemize}
\item $G$ is any locally compact group and $g = e$;
\item $G$ is discrete and finitely generated and $g$ is any element;
\item $G$ is a connected semisimple Lie group and $g$ is a semisimple element.
\end{itemize}
Let $M$ be a manifold on which $G$ acts properly and that admits a $G$-equivariant $\Spin$-structure. Let $H \subset M$ be a $G$-invariant, cocompact hypersurface such that $M \setminus H = X \cup Y$ for disjoint open subsets $X,Y \subset M$. 
The localised $\hat A$-genus $\hat A_g(H)$ is independent of the choice of a Riemannian metric. If $M$ admits a complete, $G$-invariant Riemannian metric whose 
scalar curvature  is nonnegative, and positive in a neighbourhood of $H$, then $\hat A_g(H) = 0$.
\end{corollary}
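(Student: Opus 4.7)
The strategy is to combine Theorem \ref{thm obstr psc} with the delocalised trace pairings discussed in the introduction: write
\[
\tau_g(f) = \int_{G/Z_G(g)} f(hgh^{-1})\,d(hZ_G(g))
\]
on a suitable dense subalgebra $A(G) \subset C^*(G)$ with $K_*(A(G)) = K_*(C^*(G))$, so that $\tau_g$ induces a well-defined map $K_*(C^*(G)) \to \mathbb{C}$. The plan is then to show, case by case, that
\[
\tau_g(\ind_G(D^H)) = \hat A_g(H),
\]
and deduce both statements of the corollary from this identity. Indeed, once this equality is established, the left-hand side depends only on the class of $D^H$ in $K$-homology (equivalently, only on the $G$-equivariant $\Spin$-structure of $H$), hence is independent of the $G$-invariant Riemannian metric on $M$; and under the scalar curvature hypothesis, Theorem \ref{thm obstr psc} gives $\ind_G(D^H) = 0$, so $\hat A_g(H) = 0$.

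To prove the index formula I would argue case by case. For $g = e$ and $G$ arbitrary, $\tau_e$ is the canonical trace on the group von Neumann algebra of $G$ (after passing to an appropriate completion), and the identity $\tau_e(\ind_G(D^H)) = \int_{H} c^e \hat A(H) = \hat A_e(H)$ is the $L^2$-index theorem for proper cocompact actions proved in \cite{Wang14}. For $G$ discrete and finitely generated, $A(G)$ can be taken to be a dense subalgebra built from functions of rapid decay with respect to a word-length function, and the formula $\tau_g(\ind_G(D^H)) = \hat A_g(H)$ is the higher $\hat A$-genus / delocalised index theorem of \cite{Wangwang}. For $G$ a connected semisimple Lie group and $g$ a semisimple element, one uses Harish-Chandra's Schwartz algebra and the orbital integral trace $\tau_g$; the required formula is then the fixed-point index theorem of \cite{HW2} applied to the $\Spin$-Dirac operator on $H$.

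In each case the verification that $D^H$ falls within the scope of the cited index theorem is straightforward, since $H$ is a $G$-invariant cocompact $\Spin$-manifold and $D^H$ is the associated equivariant $\Spin$-Dirac operator; the centraliser $Z_G(g)$ acts properly and cocompactly on $H^g$ (using properness of the original action), so the integral defining $\hat A_g(H)$ converges. The main obstacle is not in the overall structure of the argument but in matching conventions: one must check that the normalisation of the cutoff function $c^g$, the sign conventions in the definition of $\hat A_g$, and the choice of maximal versus reduced $C^*$-algebra used in each of \cite{HW2, Wang14, Wangwang} are compatible with the normalisation used in Theorem \ref{thm obstr psc}. Once this bookkeeping is done, the two conclusions of the corollary follow immediately as described.
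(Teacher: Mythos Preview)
Your proposal is correct and follows essentially the same approach as the paper: both deduce the corollary from Theorem \ref{thm obstr psc} by applying the orbital integral trace $\tau_g$ to $\ind_G(D^H)$ and invoking, case by case, the index formulas of \cite{Wang14}, \cite{Wangwang}, and \cite{HW2} to identify $\tau_g(\ind_G(D^H))$ with $\hat A_g(H)$. The paper's proof is terser and does not dwell on the subalgebra or normalisation issues you flag, but the logical structure is identical.
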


Theorem 2 in \cite{HM16} is a generalisation of Atiyah and Hirzebruch's vanishing theorem \cite{Atiyah70b} to actions by noncompact groups. It states that if $G$ is a connected Lie group, and not all stabilisers of the action by $G$ on $H$ are maximal compact, then $\ind_G(D^H)=0$.  So in this setting, the obstructions in Theorem \ref{thm obstr psc} and Corollary \ref{cor obstr psc} vanish. This makes it a natural question whether Riemannian metrics as in Theorem \ref{thm obstr psc} exist if $G$ is a connected Lie group. A partial affirmative answer to that question is given in Subsection \ref{sec exist}. 

This also means that the natural place to look for examples and applications where Theorem \ref{thm obstr psc} and Corollary \ref{cor obstr psc} yield nontrivial obstructions is the setting where $G$ has infinitely many connected components. (The vanishing result  generalises directly to the case where $G$ has finitely many connected components.) As noted in Remark \ref{rem Cecchini}, Theorem \ref{thm obstr psc} implies Theorem A in \cite{Cecchini16}, in the case where $G$ is the fundamental group of $M/G$ and $M$ is its universal cover. More generally, if $G$ is discrete, then $M/G$ is an orbifold. Then Theorem \ref{thm obstr psc} and Corollary \ref{cor obstr psc} lead to obstructions to orbifold metrics on $M/G$ with nonnegative scalar curvature, and positive scalar curvature near the sub-orbifold $N/G$. If $G$ acts freely, then $\hat A_g(H)$ is zero if $g \not=e$ (and $\hat A_e(H) = \hat A(H/G)$), but in the orbifold case the localised $\hat A$-genera for nontrivial elements $g$ are additional obstructions to positive scalar curvature.

\subsection{Existence for connected Lie groups} \label{sec exist}

In this subsection, we suppose that $G$ is a connected Lie group. As pointed out at the end of the previous subsection, for such groups the obstructions to $G$-invariant Riemannian metrics with positive scalar curvature in Theorem \ref{thm obstr psc} and Corollary \ref{cor obstr psc} vanish under a mild assumption on the action, so it is natural to investigate existence of such metrics.

If $G$ is connected, Abels' global slice theorem \cite{Abels} implies we have a diffeomorphism $M \cong G\times_K N$, for a $K$-invariant submanifold $N \subset M$.
Our existence result, Theorem \ref{thm:noncptlawsonyauG}, supposes that such a slice $N$ has
 \emph{$K$-bounded geometry}, a notion introduced in Definition \ref{def K bdd geom}.

   Suppose a compact, connected Lie group $K$ acts isometrically on a complete Riemannian manifold $(N,g_N)$. Let $b$ denote a bi-invariant Riemannian metric on $K$. For each $y\in N$ we have a linear map 
    \begin{equation}
    \label{eq:differentiation}
    \varphi_y\colon \mathfrak{k}\rightarrow T_y N
    \end{equation}
defined by $\varphi_y(X) := \ddt \exp(tX)y$, for $X \in \kk$.    
%    obtained by differentiating the map 
%    $$K\rightarrow N,\qquad k\mapsto k\cdot y.$$
    Define a pointwise norm function 
    \beq{eq norm phi}
    \|\varphi\|\colon N\rightarrow\mathbb{R},\qquad y\mapsto \|\varphi_y\|,
    \eeq
    where $\|\,\cdot\|$ denotes the linear operator norm with respect to $b$ and $g_N$.
   	\begin{definition}
	\label{def:noshrinkingorbits}
	We say that the action of $K$ on $N$ has \emph{no shrinking orbits} if, for any neighbourhood $U$ of the fixed point set $N^K$, there exists a constant $C_U>0$ such that for all $y\in N\setminus U$ we have
	$$\|{\varphi}(y)\| \geq C_U,$$
	 where the norm function is taken with respect to the Riemannian metric $g$.
	\end{definition}
We remark that the condition of the action having no shrinking orbits is independent of $b$.
\begin{example}
Suppose that $N = \R^2$, on which $K = \SO(2)$ acts in the natural way. Let $\psi \in C^{\infty}(\R^2)$ be positive and rotation-invariant, and consider the Riemannian metric on $\R^2$ equal to $\psi^2$ times the Euclidean metric.
Then for all $y \in \R^2$ and $X \in \R \cong \so(2)$, 
\[
\varphi_y(X) = X \begin{pmatrix} 0 & -1 \\ 1 & 0\end{pmatrix}y.
\]
So 
$
\|\varphi\|(y) = \psi(y)\|y\|,
$
where $\| y \|$ is the Euclidean norm of $y$. Hence the action has no shrinking orbits if and only if the function $y \mapsto \psi(y)\|y\|$ has a positive lower bound outside a neighbourhood of   $(\R^2)^{\SO(2)} = \{0\}$.
%
%If $\psi$ is bounded below by a constant $C>0$, and 
%if a neighbourhood $U$ of $(\R^2)^{\SO(2)} = \{0\}$ contains a ball around $0$ of radius $r$, then we may take $C_U = Cr$ in Definition \ref{def:noshrinkingorbits}, and conclude that the action has no shrinking orbits.
%If $\psi$ has no positive lower bound, then the action does have shrinking orbits. 
%In this case, $\R^2$ has injectitivy radius zero, so it does not have bounded geometry. The authors have not yet found examples of manifolds with bounded geometry and shrinking orbits.
\end{example}

%(\Todo: example where there are shrinking orbits, but the manifold is still complete with bounded geometry?)	
%	\begin{remark}
%	\label{rem:zero}
%	The maps $\varphi$ and $\|{\varphi}\|$ can still defined with respect to $g$ that do not satisfy the pointwise non-degeneracy requirement of a metric. Consequently, Definition \ref{def:noshrinkingorbits} can still be formulated for such $g$.
%	\end{remark}
Now let us define the notion of $K$-bounded geometry, which is a strengthening of the standard notion of bounded geometry.
	\begin{definition}
	A Riemannian manifold has \emph{bounded geometry} if
	\begin{itemize}
	    \item its injectivity radius is positive; 
	    %(Question: is this needed? Perhaps it is in order to obtain a complete metric in the end.)
	    \item for each $l\geq 0$ there exists $C_l>0$ such that $\|{\nabla^l R}\|_\infty\leq C_l$, where $R$ is the Riemann curvature tensor.
	\end{itemize}
	\end{definition}
	\begin{definition} \label{def K bdd geom}
	The action of $K$ on $N$ is said to have \emph{$K$-bounded geometry} if it has no shrinking orbits and $N$ has bounded geometry.
	\end{definition}

Our main existence result, proved in Subsection \ref{sec pf exist}, is the following.	
    	\begin{theorem}
	\label{thm:noncptlawsonyauG}
Let $G$ be a connected Lie group, and $K<G$ a maximal compact subgroup with non-abelian identity component. Let $N$ be a manifold admitting an effective action by $K$ with compact fixed point set. If there exists 
	%a bi-invariant metric on $K$ and 
	a
	 Riemannian metric on $N$ such that the $K$-action has $K$-bounded geometry, then the manifold \mbox{$G \times_K N$} admits a $G$-invariant metric with uniformly positive scalar curvature.
	\end{theorem}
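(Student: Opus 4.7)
The plan is to proceed in two stages: first, construct a $K$-invariant Riemannian metric on the slice $N$ with uniformly positive scalar curvature by adapting Lawson and Yau's construction \cite{Lawson74} to the noncompact, $K$-bounded geometry setting; second, apply the induction procedure from \cite{GHM1} cited in the introduction, which manufactures from any $K$-invariant uniformly positive scalar curvature metric on $N$ a $G$-invariant metric on $G\times_K N \cong M$ with uniformly positive scalar curvature. The second stage is essentially a black box, so the content is in the first.

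For the first stage, I would start with the given $K$-bounded geometry metric $g_N$ on $N$. At each $y \in N \setminus N^K$ the map $\varphi_y$ from \eqref{eq:differentiation} has nonzero image equal to the tangent space to the orbit $K\cdot y \cong K/K_y$, inducing a $K$-invariant orthogonal decomposition $TN|_{N\setminus N^K} = V \oplus V^{\perp}$ into vertical (orbital) and horizontal subbundles. For $t > 0$ I would define a new $K$-invariant metric $g_t$ equal to $t^2 g_N$ on $V$ and $g_N$ on $V^{\perp}$, smoothed via a $K$-invariant cutoff near $N^K$. An O'Neill-type Riemannian submersion computation along each orbit gives, on the region where the rescaling is in full effect,
\[
\mathrm{scal}(g_t) \;=\; t^{-2}\,\mathrm{scal}_{\mathrm{orb}} \;+\; O(1),
\]
where $\mathrm{scal}_{\mathrm{orb}}$ is the intrinsic scalar curvature of the $K$-orbit through $y$ (with metric pulled back from $g_N$ via $\varphi_y$) and the $O(1)$ term is controlled in terms of the bounded-geometry norms of the Riemann tensor of $g_N$, the second fundamental forms of orbits, and the O'Neill integrability tensor. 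Because the identity component $K_0$ is non-abelian and each stabiliser $K_y$ on $N \setminus N^K$ is a proper subgroup, every $K/K_y$ carries positive scalar curvature in the Casimir metric; the no-shrinking-orbits condition, by uniformly bounding $\|\varphi\|$ below outside any fixed neighbourhood of $N^K$, upgrades pointwise positivity of $\mathrm{scal}_{\mathrm{orb}}$ to a \emph{uniform} positive lower bound there. Choosing $t$ small enough then gives uniformly positive scalar curvature of $g_t$ away from a neighbourhood of $N^K$.

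Near the compact fixed point set $N^K$ the rescaling trick no longer helps, and this region must be treated separately. Since the $K$-action is effective and $N^K$ is compact, one can take a $K$-invariant tubular neighbourhood of $N^K$ whose fibres are nontrivial orthogonal $K$-representations, and apply a fibrewise compact-case Lawson--Yau construction to obtain uniformly positive scalar curvature on this tube, then interpolate with $g_t$ across a $K$-invariant collar. Bounded geometry controls all transition terms in the gluing, and uniformity is automatic on the precompact piece. The main obstacle I anticipate is precisely this combination of uniform estimates: ensuring that the lower bound on $\|\varphi\|$ from the no-shrinking-orbits hypothesis translates, with the help of the bounded geometry of $g_N$, into a uniform lower bound on $\mathrm{scal}_{\mathrm{orb}}$ that dominates the $O(1)$ correction terms after rescaling, while simultaneously matching this with the tubular construction near $N^K$ without losing uniform positive scalar curvature across the collar. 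Once a $K$-invariant metric $g_{\infty}$ on $N$ with uniformly positive scalar curvature is in hand, the induction result of \cite{GHM1} delivers the desired $G$-invariant metric on $M \cong G\times_K N$ and completes the proof.
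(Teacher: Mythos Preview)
Your two-stage strategy---build a $K$-invariant metric of uniformly positive scalar curvature on $N$ by adapting Lawson--Yau, then induce it up to $G\times_K N$ via \cite{GHM1}---is precisely the paper's approach. The induction step is Theorem~4.6 in \cite{GHM1}, used as a black box exactly as you say; the content is entirely in the first stage, which the paper records separately as Theorem~\ref{thm:noncptlawsonyau}.

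There is, however, a genuine gap in your first stage. The assertion that ``every $K/K_y$ carries positive scalar curvature in the Casimir metric'' whenever $K_0$ is non-abelian and $K_y$ is proper is false: for $K=\SO(3)\times S^1$ acting on $\R^3\times\C$ by rotations in each factor, at $y=(0,z)$ with $z\neq 0$ the stabiliser is $\SO(3)\times\{e\}$ and the orbit is a circle, with zero scalar curvature. So your leading term $t^{-2}\,\mathrm{scal}_{\mathrm{orb}}$ can vanish on $N\setminus N^K$ and will not dominate the $O(1)$ correction. The paper's fix, following \cite{Lawson74}, is to pass at the outset to a subgroup $K'\cong\SU(2)$ or $\SO(3)$ of $K$; these have no closed subgroups of codimension one, so every nontrivial $K'$-orbit has dimension at least $2$ and the bracket terms $\|[e_j,e_k]\|_b^2$ in the Lawson--Yau curvature estimate are uniformly positive (being constant multiples of the sectional curvature of $b$).

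Two further technical differences are worth noting. First, the paper uses the Cheeger deformation $\tilde g_t$ arising from the Riemannian submersion $K\times N\to N$ (with fibre metric $t^2 b$), rather than the direct orbit rescaling $t^2 g_N|_V \oplus g_N|_{V^\perp}$ you describe; this has the advantage that the curvature computations from \cite{Lawson74} apply verbatim and the family extends smoothly across $N^K$. Second, the paper glues in the opposite order from you: it first forms $g_N = f_1 g' + f_2 g''$ by a partition of unity, combining a Lawson--Yau metric $g'$ (which works near the compact set $N^K$, by Section~4 of \cite{Lawson74}) with the given $K$-bounded-geometry metric $g''$, and only \emph{then} applies the deformation $\tilde g_t$ uniformly to $g_N$. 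A single small $t$ then yields uniformly positive scalar curvature everywhere, so one never has to match two separately constructed positive scalar curvature metrics across a collar.
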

This result may be viewed as a strengthening of the vanishing of the obstructions to $G$-invariant metrics of positive scalar curvature in Theorem \ref{thm obstr psc} and Corollary \ref{cor obstr psc} in the case of  connected Lie groups, by the result in \cite{HM16}, in the same way that Lawson and Yau's \cite{Lawson74} construction of metrics of positive scalar curvature strengthens the vanishing of the $\hat A$-genus as in Atiyah and Hirzebruch's vanishing theorem \cite{Atiyah70b} in the compact case. (See the diagram on page 233 of \cite{Lawson74}.)

%\begin{remark}
%Theorem \ref{thm:noncptlawsonyau} is likely to generalise to the case where $G$ is almost-connected. Then Abels' slice theorem still applies. One would need to generalise the Lawson--Yau construction and its generalisations in Section \ref{sec obstr} to almost-connected compact Lie groups; we have not worked out the details of this generalisation.
%\end{remark}

\begin{remark}
By Abels' slice theorem \cite{Abels}, every manifold with a proper action by a connected Lie group $G$ is of the form $G \times_K N$ in Theorem \ref{thm:noncptlawsonyauG}.
The condition that $N^K$ is compact is equivalent to the condition that the points in $G \times_K N$ whose stabilisers in $G$ are maximal compact form a cocompact set.
\end{remark}

\section{An index theorem} \label{sec Callias}

We will deduce Theorem \ref{thm obstr psc}, and hence Corollary \ref{cor obstr psc}, from an equivariant index theorem for Callias-type operators, Theorem \ref{thm index}. This is based on equivariant index theory for such operators with respect to proper actions, developed in \cite{Guo18}. The proof of the index theorem involves several arguments analogous to those in \cite{Cecchini16}.

\subsection{The $G$-Callias-type index} \label{sec def index}

%Let $G$ be a locally compact group. A topological space on which $G$ acts will be called \emph{cocompact} if the quotient of the action is compact.
%
%Throughout this paper, 

From now on, $M$ will be a complete Riemannian manifold on which $G$ acts properly and isometrically. Let $S \to M$ denote a $\Z/2$-graded, $G$-equivariant Clifford module over $M$, and $D$ an odd-graded Dirac operator on $\Gamma^{\infty}(S)$, associated to a $G$-invariant Clifford connection on $S$ via the Clifford action by $TM$ on $S$. 

Let $\Phi$ be an odd, $G$-equivariant, fibrewise Hermitian vector bundle endomorphism of $S$. 
\begin{definition} \label{def Phi}
The endomorphism $\Phi$ is \emph{admissible} for $D$ if
\begin{itemize}
\item the operator $D\Phi + \Phi D$ on $\Gamma^{\infty}(S)$ is a vector bundle endomorphism; and
\item there are a cocompact subset $Z \subset M$ and a constant $C>0$ such that we have the pointwise estimate
\beq{eq est Phi}
 \Phi^2 \geq \|D\Phi + \Phi D\| + C
\eeq
on $M \setminus Z$.
\end{itemize}
In this setting the operator $D + \Phi$ is called a \emph{$G$-Callias-type operator}.
\end{definition}

In the rest of the paper, we will use the following notation. Let $\mu$ denote the modular function on $G$. 
Let $C^*(G)$ be either the reduced or maximal group $C^*$-algebra of $G$. Let $G$ act on sections of the bundle $S$ by
$$(gs)(x)\coloneqq g(s(g^{-1}x)),$$
for $s$ a section,  $g\in G$ and $x \in M$. 

Equip the space $\Gamma_c(S)$ with a right $C_c(G)$-action defined by
\beq{eq s1b}
(s_1\cdot b)(x) \coloneqq  \int_G (gs)(x)\cdot b(g^{-1})\mu(g)^{-1/2}\,dg
\eeq
and a $C_c(G)$-valued inner product defined by
\beq{eq inner prod CstarG}
(s_1, s_2)_{C^*(G)}(g) \coloneqq  \mu(g)^{-1/2}(s_1, gs_2)_{L^2(S)},
\eeq
for $s_1, s_2 \in \Gamma^\infty_c(S)$,  $b\in C_c(G)$, $g \in G$, and $x \in M$. Let $\cE$ be the Hilbert $C^*(G)$-module completion of $\Gamma^\infty_c(S)$ with respect to this structure.

The definition of the equivariant index of $G$-Callias-type operators is based on the following result, Theorem 4.19 in \cite{Guo18}.
\begin{theorem} \label{thm Guo}
There is a continuous $G$-invariant cocompactly supported function $f$ on $M$ such that 
\beq{eq def F}
F\coloneqq  (D+ \Phi)\bigl(  (D + \Phi)^2 + f \bigr)^{-1/2},
\eeq
is a well-defined, adjointable operator on $\cE$, such that $(\cE, F)$ is a Kasparov $(\C, C^*(G))$-cycle. Its class in $\KK(\C, C^*(G))$ is independent of the function $f$ chosen.
\end{theorem}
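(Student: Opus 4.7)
The plan is to construct $f$ so that $(D+\Phi)^2 + f$ is uniformly positive on all of $M$, then apply functional calculus to produce $F$ and verify the Kasparov cycle axioms directly, with homotopy invariance of the $KK$-class coming from a straight-line interpolation of admissible $f$'s.

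First I would check that $D + \Phi$ is essentially self-adjoint on $\Gamma^\infty_c(S)$. The principal symbol of $D + \Phi$ equals that of $D$, so $D + \Phi$ has the same finite propagation speed as $D$, and the standard Chernoff-type theorem for first-order symmetric operators on the complete manifold $M$ applies. Squaring and using the admissibility hypothesis \eqref{eq est Phi},
\[
(D+\Phi)^2 = D^2 + (D\Phi + \Phi D) + \Phi^2 \geq \Phi^2 - \|D\Phi + \Phi D\| \geq C \quad \text{on } M \setminus Z.
\]
Using a $G$-cutoff function with support containing $Z$, I would choose $f \in C^\infty(M)$ nonnegative, $G$-invariant, and cocompactly supported with $f \geq C$ on $Z$, so that $(D+\Phi)^2 + f \geq C$ everywhere. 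Functional calculus then produces the bounded self-adjoint operator $((D+\Phi)^2+f)^{-1/2}$, whence $F$ is a $G$-equivariant, bounded, self-adjoint operator on $L^2(S)$ of norm at most $1$. Because $F$ commutes with the $G$-action, the standard formulas \eqref{eq s1b}--\eqref{eq inner prod CstarG} show it extends to an adjointable operator on $\cE$.

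Next I would verify the Kasparov conditions. The commutator $[F, \lambda]$ vanishes for $\lambda \in \C$, and $F^* = F$ is immediate from the construction, so the content lies in showing
\[
F^2 - 1 = -f\bigl((D+\Phi)^2 + f\bigr)^{-1} \in \cK(\cE).
\]
Since $f$ has cocompact support, this reduces to the $G$-compactness of $\chi\bigl((D+\Phi)^2+f\bigr)^{-1}$ on $\cE$ for any $G$-invariant cocompactly supported $\chi$. I would localise by a $G$-equivariant partition of unity subordinate to a cover by slices $G\times_H U$ with $H < G$ compact and $U$ relatively compact, use local parametrices and Rellich-type compactness on each slice, and then invoke the fact that the Hilbert-module compact operators on $\cE$ are generated by the rank-one operators $s_1 \otimes s_2^*$ built via \eqref{eq s1b}--\eqref{eq inner prod CstarG} — equivalently, they are the norm closure of the $G$-equivariant operators obtained by averaging compactly supported smoothing operators against the $G$-action. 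This $G$-compactness verification is the main technical step; the estimates needed are parallel to those used to show that Dirac operators on $G$-cocompact complete manifolds have well-defined analytic assembly indices, localised here to the cocompact set $\supp(f)$.

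Finally, for independence of the $KK$-class from $f$, given two admissible choices $f_0, f_1$, I would take $f_t := (1-t)f_0 + tf_1$. Each $f_t$ is $G$-invariant and cocompactly supported, and the uniform lower bound $(D+\Phi)^2 + f_t \geq \min(\epsilon_0,\epsilon_1) > 0$ persists for all $t \in [0,1]$. Consequently $F_t$ is norm-continuous in $t$ and each $(\cE, F_t)$ is a Kasparov cycle by the previous paragraph, giving an operator homotopy and hence equality of classes in $\KK(\C, C^*(G))$.
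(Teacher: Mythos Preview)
The paper does not prove this theorem; it is imported as Theorem 4.19 of \cite{Guo18}, with the construction of $F$ deferred to Definition 4.11 there. So there is no in-paper argument to compare against, only the cited source.

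Your outline is in the right spirit: the choice of $f$, the lower bound $(D+\Phi)^2+f\geq C$, the identity $F^2-1=-f\bigl((D+\Phi)^2+f\bigr)^{-1}$, and the straight-line homotopy in $f$ are all correct. The gap is in the passage from $L^2(S)$ to $\cE$. You assert that because $F$ commutes with the $G$-action, ``the standard formulas \eqref{eq s1b}--\eqref{eq inner prod CstarG} show it extends to an adjointable operator on $\cE$.'' This is precisely the nontrivial content of the theorem, not a formality: $\cE$ is the completion of $\Gamma_c(S)$ in a $C^*(G)$-valued norm, $F$ does not preserve compact supports, and $G$-equivariance plus $L^2$-boundedness does not by itself yield adjointability on a Hilbert $C^*(G)$-module. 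One must either (i) establish that $D+\Phi$ is a regular self-adjoint operator on $\cE$ and invoke the functional calculus for regular operators on Hilbert modules, or (ii) prove directly that the relevant $G$-equivariant operators extend, via a positivity argument in $C^*(G)$ --- compare the nontrivial proofs of Propositions~\ref{prop E0} and \ref{prop:boundedhilbert} in this paper, which handle only the much simpler case of bundle endomorphisms. The reference \cite{Guo18} takes route (i), building $F$ through an explicit operator-valued integral (its Proposition 4.12, used in this paper in the proof of Theorem~\ref{thm rel index}) with convergence in $\calL(\cE)$ controlled by resolvent bounds.

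Similarly, your compactness step is where the remaining work lies: showing $f\bigl((D+\Phi)^2+f\bigr)^{-1}\in\cK(\cE)$ is exactly the Hilbert-module Rellich lemma, quoted here as Theorem~\ref{thm Rellich} and proved in \cite{Guo18}. Your slice/parametrix sketch is the right shape for that proof, but it is a genuine theorem rather than a routine localisation.
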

For details about the definition of the operator $F$, we refer to Definition 4.11 in \cite{Guo18}.

\begin{definition} \label{def index}
The \emph{$G$-index} of the $G$-Callias-type operator $D + \Phi$ is the class
\[
\ind_G(D + \Phi) \coloneqq  [\cE, F] \in K_0(C^*(G)) = \KK(\C, C^*(G))
\]
as in Theorem \ref{thm Guo}.
\end{definition}

\subsection{Hypersurfaces and the index theorem} \label{sec index thm}

In the setting of the previous subsection, we now suppose that $S = S_0 \oplus S_0$ for an ungraded, $G$-equivariant Clifford module $S_0$ over $M$, where the first copy of $S_0$ is the even part of $S$, and the second copy is the odd part. Suppose that 
\beq{eq D D0}
D = 
\begin{pmatrix}
0 & D_0 \\ D_0  & 0
\end{pmatrix}
\eeq
for a Dirac operator $D_0$ on $S_0$, and that
\beq{eq Phi Phi0}
\Phi = \begin{pmatrix}
0 & i\Phi_0 \\ -i \Phi_0  & 0
\end{pmatrix}
\eeq
for a Hermitian endomorphism $\Phi_0$ of $S_0$. (The conditions on  $D\Phi + \Phi D$ then become conditions on $[D_0, \Phi_0]$.)

Let $Z$ be as in Definition \ref{def Phi}. Let $M_- \subset M$ be a $G$-invariant, cocompact subset containing $Z$ in its interior, such that $N \coloneqq  \partial M_-$ is a smooth submanifold of $M$. Let $M_+$ be the closure of the complement of $M_-$, so that $N = M_- \cap M_+$ and $M = M_- \cup M_+$.  In this and similar settings, we write
\[
M = M_- \cup_N M_+.
\]

By \eqref{eq est Phi}, the restriction of $\Phi_0$ to $N$ is fibrewise invertible. Let $S^N_+ \to N$ and $S^N_- \to N$ be its positive and negative eigenbundles. 
(These are vector bundles, even though  eigenbundles for single eigenvalues may not be.)
% (\Todo: are these always vector bundles?)
Clifford multiplication by the unit normal vector field $\hat n$ to $N$ pointing into $M_+$, times $-i$,  defines  $G$-invariant gradings on $S^N_{\pm}$.

Let $\nabla^{S_0}$ be the Clifford connection on $S_0$ used to define $D_0$. By restriction and projection, it defines connections $\nabla^{S^N_{\pm}}$ on $S^N_{\pm}$. The Clifford action by $TM|_N$ on $S_0|_N$ preserves $S^N_{\pm}$ by the first condition in Definition \ref{def Phi}; see also Remark 1.2 in \cite{Anghel93}. % (\Todo: check).
Hence the connections $\nabla^{S^N_{\pm}}$ define Dirac operators $D^{S^N_{\pm}}$ on $\Gamma^{\infty}(S^N_{\pm})$. These operators are  odd-graded. Because $N$ is cocompact, $D^{S^N_+}$ has an equivariant index
\[
\ind_G(D^{S^N_+}) \in K_0(C^*(G))
\]
defined by the analytic assembly map \cite{Connes94}.
\begin{theorem}[$G$-Callias-type index theorem]\label{thm index}
We have
\beq{eq index}
\ind_G(D + \Phi) =\ind_G(D^{S^N_+}) \quad \in K_0(C^*(G)).
\eeq
\end{theorem}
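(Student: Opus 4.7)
The strategy follows the standard pattern of Callias-type index theorems \cite{Anghel93, Bunke95, Cecchini16}, adapted to the Kasparov-module framework of \cite{Guo18}. The proof proceeds in three steps. First, I would use homotopy invariance of the $G$-Callias-type index to deform $(D_0, \Phi_0)$ to a product form in a $G$-invariant tubular neighbourhood $U \cong N \times (-\varepsilon, \varepsilon)$ of $N$. Concretely, the aim is to arrange that on $U$ the Clifford module $S_0$ is pulled back from $N$; the Dirac operator takes the form $D_0 = c(\partial_t)(\partial_t + D_N)$ for a $G$-equivariant Dirac operator $D_N$ on $S_0|_N$ preserving the decomposition $S_0|_N = S^N_+ \oplus S^N_-$; and $\Phi_0$ is the pullback of $\Phi_0|_N$, hence $t$-independent and commuting with $D_N$. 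Along the homotopy, admissibility \eqref{eq est Phi} is preserved near $N$ by continuity and holds outside $U$ since the deformation is trivial there. Invariance of the index follows because the operator $F$ in \eqref{eq def F} depends continuously on $(D, \Phi)$ in the Kasparov-module topology.

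\textbf{Relative index and vanishing on the positive side.} Next, form the cylindrical-end manifolds $\hat M_\pm \coloneqq M_\pm \cup_N (N \times \R_\pm)$ with the extended product Callias data. Using a $G$-invariant partition of unity subordinate to a thickening of $\{M_-, M_+\}$, split the cycle $(\cE, F)$ into pieces supported on $\hat M_\pm$ and prove a $G$-equivariant relative index theorem
\[
\ind_G(D + \Phi) = \ind_G(\hat D^- + \hat \Phi^-) + \ind_G(\hat D^+ + \hat \Phi^+),
\]
in the style of Cecchini. On $\hat M_+$, admissibility \eqref{eq est Phi} combined with the product structure on the cylindrical end gives
\[
(D + \Phi)^2 \geq \Phi^2 - \|D\Phi + \Phi D\| \geq C > 0
\]
everywhere, so $D + \Phi$ is uniformly invertible. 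In the Kasparov picture this forces $F^2 - 1 \in \cK(\cE_+)$, so the second summand vanishes and $\ind_G(D + \Phi) = \ind_G(\hat D^- + \hat \Phi^-)$.

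\textbf{Cylinder computation.} A further deformation on $\hat M_-$ reduces the index to that on the full cylinder $N \times \R$ with $\Phi_0$ constant and acting as $+\lambda \cdot \id_{S^N_+}$ and $-\lambda \cdot \id_{S^N_-}$ for some $\lambda > 0$. The square $(D+\Phi)^2$ then decouples across $S^N_+ \oplus S^N_-$, and a separation of variables $s(n,t) = \psi(t) \sigma(n)$ shows that $L^2(\R)$-normalisable modes along the $t$-direction occur only in the $S^N_+$ sector and are Gaussian ground states annihilated by $\partial_t \mp \lambda$ (depending on chirality). Projecting onto these ground states identifies the resulting $C^*(G)$-Kasparov cycle with the one representing $\ind_G(D^{S^N_+})$, which is essentially the equivariant $KK$-theoretic suspension isomorphism.

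\textbf{Main obstacle.} The hardest step is the $G$-equivariant relative index theorem: cutting and pasting Hilbert $C^*(G)$-module Kasparov cycles across $N$. The cutoff function $f$ in \eqref{eq def F} is global, making $F$ nonlocal, so splitting $(\cE, F)$ into cycles on $\hat M_\pm$ requires careful commutator estimates to verify adjointability and Fredholmness of the truncated operators, and to confirm that their $KK$-classes sum to $[\cE, F]$. Cecchini's argument in the $C^*(\pi_1)$-coefficient setting provides a detailed template, but adapting it to arbitrary locally compact $G$ means re-deriving the relevant continuity and $C^*(G)$-compactness properties within the framework of \cite{Guo18}, in particular controlling how $F$ interacts with $G$-invariant multiplication operators across the collar.
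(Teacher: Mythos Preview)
Your overall architecture---relative index theorem, vanishing on the piece where admissibility holds globally, and a cylinder computation---matches the paper's. The first two steps are essentially correct, and indeed your additive splitting $\ind_G(D+\Phi)=\ind_G(\hat D^-+\hat\Phi^-)+\ind_G(\hat D^++\hat\Phi^+)$ follows from the Bunke-style relative index theorem applied to $M$ and $N\times\R$ (the latter carrying the pulled-back, everywhere-admissible data, hence contributing zero); this is exactly what the paper packages as Corollary~\ref{cor 1234}.

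The genuine gap is in your third step. First, passing from $\hat M_-$ to the full cylinder $N\times\R$ is not a ``deformation'': the underlying manifolds are different, and a homotopy of admissible endomorphisms cannot change that. The paper accomplishes this by a \emph{second} cut-and-glue, pairing $\hat M_-$ with a reflected copy $M_C^-$ and using that the resulting cocompact double $M_-\cup_N M_-^-$ has vanishing index because $D$ and $\Phi$ there are of the special form~\eqref{eq D D0}--\eqref{eq Phi Phi0} (Lemma~\ref{lem cocpt}). You have not accounted for this.

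Second, and more seriously, your choice of potential on the cylinder is wrong. A $t$-independent $\Phi_0$ on $N\times\R$ satisfying the admissibility estimate on $N$ satisfies it everywhere, so by Lemma~\ref{lem invtble} the Callias index on the cylinder is zero---not $\ind_G(D^{S^N_+})$. Concretely, the equation $(\partial_t\mp\lambda)\psi=0$ with \emph{constant} $\lambda>0$ has solutions $e^{\pm\lambda t}$, neither of which lies in $L^2(\R)$; there are no Gaussian ground states. The paper's fix is to first homotope the potential on the cylindrical end of $\hat M_-$ to $\chi(t)\gamma^N$ for an odd, linearly growing function $\chi$ (Lemma~\ref{lem Phi chi}), so that after the second cut-and-glue the cylinder carries the potential $\begin{pmatrix}\chi & 0\\ 0 & -1\end{pmatrix}$ on $S^N_+\oplus S^N_-$. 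Then $\partial_t+\chi(t)$ has the normalisable ground state $e^{-\int_0^t\chi}$, the $S^N_-$ summand is uniformly invertible, and the separation-of-variables argument you sketched goes through to give $\ind_G(D^{S^N_+})$. Without the $t$-dependent $\chi$, the suspension mechanism you invoke simply does not produce a nonzero class.
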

Versions of this result where $G$ is trivial were proved in \cite{Anghel93, Bott78, Bunke95, Callias78, Kucerovsky01}. Versions for operators on bundles of modules over operator algebras are proved in  \cite{Braverman18, Cecchini16}.

There are various index theorems for the the image of the right hand side of \eqref{eq index} under traces  \cite{HW2, Wang14, Wangwang} or pairings with higher cyclic cocycles \cite{HST19, Pflaum15, Piazza18}. Via these results, Theorem \ref{thm index} yields topological expressions for the corresponding images of the left hand side of \eqref{eq index}. The results in \cite{HW2, Wang14, Wangwang} will be used to deduce Corollary \ref{cor obstr psc} from Theorem \ref{thm obstr psc}.

\section{Properties of the $G$-Callias-type index} \label{sec prop ind}

To prove Theorem \ref{thm index}, we will make use of the properties of the index of Definition \ref{def index} that we describe below.

\subsection{Sobolev modules}

We start by recalling the definition of Sobolev Hilbert $C^*(G)$-modules from \cite{Guo18}. Let $M$, $G$,  $S$ and $D$ be as in Subsection \ref{sec def index}.
\begin{definition}
		\label{def sobolev}
		For each nonnegative integer $j$, define $\Gamma_c^{\infty,j}(S)$ to be the pre-Hilbert $C_c(G)$-module whose underlying vector space is $\Gamma_c^{\infty}(S)$, equipped with the right $C_c(G)$-action defined by \eqref{eq s1b}, and $C_c(G)$-valued inner product defined by
\[
%\begin{split}
%(e\cdot b)(x) &= \int_G g(e(g^{-1}x))\cdot b(g^{-1})\mu(g)^{-1/2}\,dg,\\
\langle e_1,e_2\rangle_{\cE^j}=\sum_{k=0}^j (D^k e_1, D^k e_2)_{C^*(G)},
%\int_M\langle D^k e_1(x),D^k (g e_2)(x)\rangle_E\,dx\in C_c^\infty(G),
%\end{split}		
\]
where $e_1,e_2\in \Gamma_c^\infty(S)$ and $(\relbar, \relbar)_{C^*(G)}$ is as in \eqref{eq inner prod CstarG}. Here we set $D^0$ equal to the identity operator. Denote by $\mathcal{E}^j(S)=\mathcal{E}^j$ the vector space completion of $\Gamma_c^{\infty,j}(S)$ with respect to the norm %$\| \cdot \|_j$ 
induced by $\langle\relbar,\relbar\rangle_{\cE^j}$, and extend naturally the $C_c^\infty(G)$-action to a $C^*(G)$-action, and $\langle\relbar,\relbar\rangle_{\cE^j}$ to a $C^*(G)$-valued inner product on $\mathcal{E}^j$, to give it the structure of a Hilbert $C^*(G)$-module. 
%Let $\| \cdot \|_j$ denote the associated norm. 
We call $\mathcal{E}^j$ the {\it $j$-th $G$-Sobolev module with respect to $D$}. 
\end{definition}

The module $\cE$ defined above Theorem \ref{thm Guo} equals $\cE^0$.
%We will often write $\mathcal{E}\coloneqq \mathcal{E}^0$. 
The following version of the Rellich lemma holds for Sobolev modules (Theorem 3.12 in \cite{Guo18}).
\begin{theorem}
		\label{thm Rellich}
		Let $f$ be a continuous $G$-invariant cocompactly supported function on $M$. Then multiplication by $f$ defines an element of $\mathcal{K}(\mathcal{E}^{s},\mathcal{E}^{t})$ whenever $s>t$.
\end{theorem}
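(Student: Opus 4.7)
The plan is to reduce the claim to the classical Rellich--Kondrachov compactness theorem on a relatively compact subset of $M$, transferred to the Hilbert $C^*(G)$-module setting via a cutoff function for the proper $G$-action. The reduction proceeds in two steps, with the nontrivial remaining case being $s=1,\,t=0$.

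First I would reduce to the case $s - t = 1$: the identity map on $\Gamma_c^{\infty}(S)$ extends to a contractive Hilbert-module inclusion $\iota\colon \cE^s \hookrightarrow \cE^{t+1}$, since the defining inner product immediately yields $\|\cdot\|_{\cE^{t+1}} \leq \|\cdot\|_{\cE^s}$ on the dense subspace. Because $\mathcal{K}(\cE^s, \cE^t)$ is a norm-closed two-sided submodule of the adjointable operators, compactness of $M_f\colon \cE^{t+1} \to \cE^t$ implies compactness of $M_f \circ \iota\colon \cE^s \to \cE^t$. Next I would reduce to $t=0$: applying the Leibniz rule to $D^k(f\xi)$ for $\xi \in \Gamma_c^{\infty}(S)$ expresses this as $\sum_{j=0}^k M_{f_{k,j}} D^j \xi$, where each $f_{k,j}$ is a $G$-invariant, cocompactly supported continuous endomorphism of $S$ built from $f$ and principal symbols of $D$. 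Combined with the tautological isometric embedding $\cE^t \hookrightarrow \bigoplus_{k=0}^t \cE^0$, $\eta \mapsto (D^k \eta)_{k=0}^t$, and the boundedness of $D^j\colon \cE^{t+1} \to \cE^1$ for $j \leq t$, compactness of $M_f\colon \cE^{t+1} \to \cE^t$ follows from compactness of each $M_{f_{k,j}}\colon \cE^1 \to \cE^0$.

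For the case $s=1,\,t=0$, let $\varphi$ be a $G$-invariant, cocompactly supported continuous endomorphism of $S$. Pick a nonnegative $c \in C_c^{\infty}(M)$ satisfying $\int_G g^*(c^2)\,dg \equiv 1$ on a neighbourhood of $\supp \varphi$ (a squared cutoff function, produced by standard properness arguments). The function $c$ unfolds the $C^*(G)$-valued inner product on $\cE^0$ into an ordinary $L^2$-inner product on the compactly supported pre-Hilbert space $\Gamma_c^{\infty}(S|_{\supp c})$. On the relatively compact set $\supp c$, the classical Rellich--Kondrachov theorem produces a sequence of finite-rank operators converging in operator norm to multiplication by $\varphi$, viewed as a map from $H^1(S|_{\supp c})$ to $L^2(S|_{\supp c})$. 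Each such finite-rank approximant lifts, via $c$, to a rank-one Hilbert-module operator of the form $\theta_{u,v}\colon \xi \mapsto u\cdot \langle v, \xi\rangle_{C^*(G)}$ between $\cE^1$ and $\cE^0$, and the classical norm convergence transfers to convergence in the Hilbert-module operator norm.

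The main obstacle lies in the last step: establishing the cutoff-function correspondence precisely enough to translate classical finite-rank approximations into Hilbert-module rank-one operators, and verifying that the classical operator norm dominates the Hilbert-module operator norm. This rests on the standard identification of the Hilbert $C^*(G)$-module of $L^2$-sections on a proper cocompact $G$-space as a finitely generated projective $C^*(G)$-module via a cutoff function, going back to Kasparov; once this identification is in place, the transfer of finite-rank approximants and their norms becomes essentially bookkeeping.
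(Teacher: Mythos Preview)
The paper does not prove this statement at all; it is quoted as Theorem 3.12 in \cite{Guo18} and simply used as input. So there is no proof here to compare yours against, and I can only comment on your argument on its own merits.

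Your overall strategy is the natural one and matches the standard route to such results. Two remarks:

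\emph{Smoothness of $f$.} Your Leibniz reduction from general $t$ to $t=0$ forms the coefficients $f_{k,j}$ out of iterated commutators $[D,[D,\ldots,[D,f]\ldots]]$, which requires $f$ to be smooth. The hypothesis only gives $f$ continuous. For $t=0$ this is harmless: approximate $f$ uniformly by smooth $G$-invariant cocompactly supported functions (by averaging a smooth approximant over $G$), and use that $M_{f_n}\to M_f$ in $\mathcal{L}(\cE^s,\cE^0)$ since the $\cE^0$-norm of $(f_n-f)\xi$ is controlled by $\|f_n-f\|_\infty\|\xi\|_{\cE^0}$. For $t\geq 1$, however, even making sense of $M_f\colon \cE^{t+1}\to\cE^t$ for merely continuous $f$ is delicate, since $f\xi$ need not lie in the domain of $D$. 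You should either restrict the reduction argument to smooth $f$ and then run the approximation only at the very end as a map into $\cE^0$ (which, combined with your first reduction, suffices), or note explicitly how $M_f$ is to be interpreted.

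\emph{The transfer step.} Your identification of the main obstacle is accurate, and your pointer to Kasparov's description of $\cE^0$ over a cocompact $G$-space as a finitely generated projective $C^*(G)$-module is the right tool. One clean way to package it: choose a $G$-invariant cocompact open set $Z$ containing $\supp\varphi$; then $M_\varphi$ factors through the submodule of sections supported in $Z$, which is finitely generated projective over $C^*(G)$. On such a module every adjointable operator is compact, so it remains only to check that $M_\varphi$ composed with the inclusion $\cE^1\hookrightarrow\cE^0$ lands adjointably in that submodule and that the relevant factorisation is through bounded maps. This avoids having to match classical and Hilbert-module operator norms directly. Minor slip: a finite-rank classical approximant lifts to a finite-rank (not rank-one) Hilbert-module operator.
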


We will state and prove a homotopy invariance result, Proposition \ref{prop htp invar}, for the index in Definition \ref{def index}, that will be of use later. A hypothesis in this result is that a certain vector bundle endomorphism defines adjointable operators on the Sobolev modules $\cE^0$ and $\cE^1$. In order to check this condition in some geometric situations relevant to us, we will need Propositions \ref{prop E0} and \ref{prop:boundedhilbert} below.

\begin{proposition}
	\label{prop E0}
%	Let $M$ be a complete Riemannian manifold with a proper $G$ action, and let $S\rightarrow M$ be as before. Let
% $\Psi$ be
% 
 A smooth, $G$-invariant, uniformly bounded bundle endomorphism of $S$
 %. Then $\Psi$ 
 defines an element of $\mathcal{L}(\mathcal{E}^0)$.
\end{proposition}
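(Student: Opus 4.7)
The plan is to show that the formal adjoint of $\Psi$ on $\Gamma^\infty_c(S)$ is given by its fibrewise Hermitian adjoint $\Psi^*$, and then to promote this to adjointability on the completion $\mathcal{E}^0$ via a standard Hilbert $C^*$-module criterion. Since $\Psi$ is smooth, $G$-invariant and uniformly bounded, the fibrewise Hermitian adjoint $\Psi^*$ (formed using the $G$-invariant Hermitian metric on $S$) has the same three properties, and both $\Psi$ and $\Psi^*$ preserve $\Gamma^\infty_c(S)$. Their $G$-equivariance makes them $C_c(G)$-linear for the action \eqref{eq s1b}, and using $\Psi^* g = g \Psi^*$ one verifies directly from \eqref{eq inner prod CstarG} that
\begin{equation*}
(\Psi s_1, s_2)_{C^*(G)}(g) = \mu(g)^{-1/2}(s_1, g \Psi^* s_2)_{L^2(S)} = (s_1, \Psi^* s_2)_{C^*(G)}(g),
\end{equation*}
i.e.\ $\Psi^*$ is a formal adjoint of $\Psi$ on $\Gamma^\infty_c(S)$.

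By the standard Hilbert $C^*$-module criterion, a densely defined $C_c(G)$-linear map $T$ admitting a formal adjoint extends to a bounded adjointable operator of norm at most $C$ provided $(Ts, Ts)_{C^*(G)} \leq C^2 (s,s)_{C^*(G)}$ in $C^*(G)$ for every $s$. Applying the formal-adjoint relation, $(\Psi s, \Psi s)_{C^*(G)} = (s, \Psi^* \Psi s)_{C^*(G)}$, so the bound $\|\Psi\|_{\mathcal{L}(\mathcal{E}^0)} \leq \|\Psi\|_\infty$ reduces to showing
\begin{equation*}
(s, B s)_{C^*(G)} \geq 0 \quad \text{in } C^*(G),
\end{equation*}
for every $s \in \Gamma^\infty_c(S)$, where $B := \|\Psi\|_\infty^2 \id_S - \Psi^*\Psi$ is a smooth, $G$-invariant, self-adjoint, fibrewise non-negative bundle endomorphism.

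The main obstacle is that the pointwise positive square root of $B$ is in general only continuous, so cannot be directly fed into the $C^*(G)$-valued inner product on $\Gamma^\infty_c(S)$. To sidestep this, for $\epsilon > 0$ I would consider $B_\epsilon := B + \epsilon\, \id_S$, whose pointwise spectrum lies in the compact subset $[\epsilon, \|\Psi\|_\infty^2 + \epsilon] \subset (0, \infty)$. Holomorphic functional calculus (applied fibrewise with the principal branch of $\sqrt{\,\cdot\,}$ on the right half-plane) then produces a smooth, $G$-invariant, uniformly bounded, self-adjoint endomorphism $A_\epsilon := B_\epsilon^{1/2}$ with $A_\epsilon^2 = B_\epsilon$. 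Since $A_\epsilon$ preserves $\Gamma^\infty_c(S)$, the formal-adjoint calculation of the first paragraph gives
\begin{equation*}
(s, B_\epsilon s)_{C^*(G)} = (A_\epsilon s, A_\epsilon s)_{C^*(G)} \geq 0
\end{equation*}
in $C^*(G)$, the last step being the defining positivity of the Hilbert $C^*$-module inner product on $\mathcal{E}^0$. Letting $\epsilon \to 0^+$ in norm gives $(s, Bs)_{C^*(G)} \geq 0$, establishing boundedness. The same argument applied to $\Psi^*$, together with continuity of the formal-adjoint relation from the dense subspace $\Gamma^\infty_c(S)$, then places $\Psi$ in $\mathcal{L}(\mathcal{E}^0)$ with adjoint equal to the extension of $\Psi^*$.
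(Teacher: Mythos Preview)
Your proof is correct and takes a genuinely different route from the paper's. Both arguments reduce to showing that $(s, Bs)_{C^*(G)} \geq 0$ in $C^*(G)$ for $B = \|\Psi\|_\infty^2 - \Psi^*\Psi$, but they manufacture the needed square root in different places. The paper takes the operator square root $Q$ of $B$ in $\mathcal{B}(L^2(S))$; this $Q$ is $G$-invariant but is not a bundle endomorphism and need not preserve $\Gamma^\infty_c(S)$, so one cannot simply write $(s,Bs)_{C^*(G)} = (Qs,Qs)_{C^*(G)}$. Instead the paper inserts a cutoff function, integrates over $G$, and verifies positivity in $C^*(G)$ by testing against every unitary representation of $G$. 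You take the \emph{fibrewise} square root, regularising by $\epsilon$ so that holomorphic functional calculus produces a smooth $G$-invariant endomorphism $A_\epsilon$ that does preserve $\Gamma^\infty_c(S)$; positivity of $(s,B_\epsilon s)_{C^*(G)} = (A_\epsilon s, A_\epsilon s)_{C^*(G)}$ is then immediate from the Hilbert-module axiom, and the limit $\epsilon\to 0$ is trivial since $(s,B_\epsilon s)_{C^*(G)} = (s,Bs)_{C^*(G)} + \epsilon(s,s)_{C^*(G)}$. Your argument is shorter and stays entirely inside the pre-Hilbert module. The paper's technique, on the other hand, is set up to be reused: in the companion Proposition~\ref{prop:boundedhilbert} the same scheme is run with the square root taken in $\mathcal{B}(H^1(S))$, where no fibrewise substitute is available.
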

\begin{proof}
Let $\Psi$ be a smooth, $G$-invariant, uniformly bounded bundle endomorphism of $S$.
	Since $\Psi$ is uniformly bounded, it defines a bounded operator on $L^2(S)$. Let $\norm{\Psi}$ denote its operator norm, let ${\cut}$ be a cutoff function on $M$, and let $\Psi^*$ be the pointwise adjoint of $\Psi$.
%	We first show that
%	$$\langle(\Psi^*\Psi-\norm{\Psi}^2)e,e\rangle_{\mathcal{E}^0}\geq 0\in C^*(G)$$
%	for all $e\in \Gamma_c^\infty(S)$. 
	Since the operator $\Psi_1\coloneqq \Psi^*\Psi-\norm{\Psi}^2$ is positive on $L^2(S)$, it has a positive square root $Q$ that one observes is $G$-invariant. For a fixed $e \in \Gamma^{\infty}_c(S)$, the function
\[
g\mapsto ({\cut}\Psi_1(ge),ge)_{L^2(S)}=(\sqrt{{\cut}}Q(ge),\sqrt{{\cut}}Q(ge))_{L^2(S)}
\]
	has compact support in $G$, by properness of the $G$-action. Thus the map $G\to L^2(S)$ defined by $g\mapsto\sqrt{{\cut}}Q(ge)$ has compact support in $G$.

%	It follows that for any unitary representation of $G$ on a Hilbert space $(H,(\,,\,)_H)$ and $h\in H$,
%	$$v\coloneqq\int_G \mu^{-1/2}(g)\sqrt{{\cut}}Q(ge)\otimes g(h)\,dg$$
%	is a well-defined vector in $L^2(S)\otimes H$. By computations similar to those in the proof of Proposition 5.4 in \cite{Guo18}, its norm $\norm{v}_{L^2(S)\otimes H}$ is equal to
%	$$\int_G\bigg\langle\left(\int_G g({\cut}\Psi_1)\,dg\right)e,e\bigg\rangle_{\mathcal{E}^0}(g')\cdot(\pi(g')h,h)_H\,dg'.$$
%	Thus $\langle(\int_G g({\cut}\Psi_1)\,dg)e,e\rangle_{\mathcal{E}^0}=\langle\Psi_1 e,e\rangle_{\mathcal{E}^0}$ is a positive operator on $H$ for all unitary representations of $G$, where we let $f\in C_c^\infty(G)$ act on $H$ by 
%	$$f\cdot h\coloneqq \int_G f(g)g(h)\,dg.$$
%	It follows that the element
%	$$\langle\Psi_1 e,e\rangle_{\mathcal{E}^0}=\left\langle\left(\Psi^*\Psi-\norm{\Psi}^2\right)e,e\right\rangle_{\mathcal{E}^0}=\langle\Psi e,\Psi e\rangle_{\mathcal{E}^0}-\norm{\Psi}^2\langle e,e\rangle_{\mathcal{E}^0}.$$
%	is positive in $C^*(G)$. 

It follows that for any unitary representation $\pi$ of $G$ on a Hilbert space $H$ and $h\in H$,
\[	
	v\coloneqq\int_G\mu(g)^{-1/2} \sqrt{{\cut}} Q(ge)\otimes\pi(g)h\,dg
\]
	is a well-defined vector in $L^2(S)\otimes H$. By computations similar to those in the proof of Proposition 5.4 in \cite{Guo18}, one sees that $\norm{v}_{L^2(S) \otimes H}$ equals
	\[
	\int_G \int_G\left \langle g c \Psi_1 g^{-1}e,e\right\rangle_{\mathcal{E}^0} (g') \,dg \cdot(\pi(g')h,h)_H\,dg'.
\]	
	Thus, for any unitary representation $\pi$ of $G$, 
\[	
	\pi \left(
	 \int_G\left \langle g c \Psi_1 g^{-1}e,e\right\rangle_{\mathcal{E}^0}  \,dg 
%	\bigg\langle\left(\int_G g c \Psi_1 g^{-1}\,dg\right)e,e\bigg\rangle_{\mathcal{E}^0} 
	\right) = \pi ( \langle \Psi_1 e ,e\rangle_{\mathcal{E}^0} )
\]	
	is a positive operator, where we let $f\in C_c(G)$ act on $H$ by 
\[	
\pi(f) h\coloneqq 	 \int_G f(g)\pi(g)h\,dg.
\]		
	It follows that the element
\[
\langle\Psi_1 e,e\rangle_{\mathcal{E}^0}=\left\langle\left(\Psi^*\Psi-\norm{\Psi}^2\right)e,e\right\rangle_{\mathcal{E}^0}=\langle\Psi e,\Psi e\rangle_{\mathcal{E}^0}-\norm{\Psi}^2\langle e,e\rangle_{\mathcal{E}^0}.
\]
	is positive in $C^*(G)$. 
	Hence $\Psi$ extends to an operator on all of $\mathcal{E}^0$. Similarly, $\Psi^*$ defines an operator on all of $\mathcal{E}^0$ that one checks is the adjoint of $\Psi$.
\end{proof}

\begin{proposition}
		\label{prop:boundedhilbert}
		Suppose that there are a $G$-invariant, cocompact subset $K \subset M$ and a $G$-invariant, cocompact hypersurface $N \subset M$ such that there is a $G$-equivariant isometry $M \setminus K \cong N \times (0,\infty)$, and a $G$-equivariant vector bundle isomorphism $S|_{M \setminus K} \cong S|_N \times (0,\infty)$.
Let $\Psi$ be a $G$-equivariant vector bundle endomorphism of $S$. Suppose that, on $M\setminus K$,  $\Psi$ and $D$ are constant in the factor $(0,\infty)$ of $M \setminus K \cong N \times (0,\infty)$.
Then $\Psi$ defines an element of $\mathcal{L}(\mathcal{E}^1)$.
\end{proposition}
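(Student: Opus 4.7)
The plan is to reduce the adjointability on $\mathcal{E}^1$ to two adjointability statements on $\mathcal{E}^0$, and then invoke Proposition \ref{prop E0} twice. Concretely, since for $e \in \Gamma^\infty_c(S)$ one has
\[
\|\Psi e\|_{\mathcal{E}^1}^2 = \|\Psi e\|_{\mathcal{E}^0}^2 + \|D\Psi e\|_{\mathcal{E}^0}^2 = \|\Psi e\|_{\mathcal{E}^0}^2 + \|\Psi De + [D,\Psi]e\|_{\mathcal{E}^0}^2,
\]
it suffices to show that both $\Psi$ and the commutator $[D,\Psi]$ extend to elements of $\mathcal{L}(\mathcal{E}^0)$. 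The bound then takes the form $\|\Psi e\|_{\mathcal{E}^1} \le C\|e\|_{\mathcal{E}^1}$, and the same argument applied to the pointwise adjoint $\Psi^*$ (which is also $G$-equivariant, smooth, and constant in the $(0,\infty)$ direction on $M \setminus K$) produces a bounded extension whose agreement with the adjoint of $\Psi$ on $\Gamma^\infty_c(S)$ may be checked from the densely defined formula and the analogous relation $D\Psi^* = \Psi^* D + [D,\Psi^*]$.

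To feed Proposition \ref{prop E0}, I would first verify that $\Psi$ is uniformly bounded as a bundle endomorphism. The pointwise operator norm $\|\Psi\|\colon M \to \mathbb{R}$ is a $G$-invariant continuous function. On $M\setminus K \cong N \times (0,\infty)$, the hypothesis of constancy along the $(0,\infty)$ factor means this norm function is pulled back from $N$; since $N$ is cocompact, any $G$-invariant continuous function on it is bounded. On the cocompact complement $K$, the same cocompactness-plus-$G$-invariance argument applies. Hence $\|\Psi\|$ is globally bounded, and Proposition \ref{prop E0} yields $\Psi \in \mathcal{L}(\mathcal{E}^0)$.

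Next I would observe that $[D,\Psi]$ is again a smooth $G$-equivariant bundle endomorphism, because $D$ is first order and $\Psi$ is zeroth order, so the principal symbol of the commutator vanishes and locally $[D,\Psi]$ is Clifford multiplication on the covariant derivative of $\Psi$. On the cylinder, the assumption that both $D$ and $\Psi$ are constant in the $(0,\infty)$ direction implies the same for $[D,\Psi]$: the covariant derivative of $\Psi$ in the axial direction vanishes, and the $N$-directional contribution is pulled back from the cocompact manifold $N$. Thus the uniform boundedness argument from the previous paragraph applies verbatim to $[D,\Psi]$, and Proposition \ref{prop E0} gives $[D,\Psi] \in \mathcal{L}(\mathcal{E}^0)$, completing the proof. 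The main subtle step is the passage from the local constancy and $G$-equivariance of $\Psi$ to uniform boundedness of both $\Psi$ and $[D,\Psi]$; once this is in hand, the adjointability on $\mathcal{E}^1$ is a formal consequence of the Sobolev-style identity displayed above.
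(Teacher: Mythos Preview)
Your boundedness argument is correct and considerably simpler than the paper's: reducing to the uniform boundedness of $\Psi$ and $[D,\Psi]$ and invoking Proposition~\ref{prop E0} twice bypasses the paper's route through the classical Sobolev space $H^1(S)$, Lemma~\ref{lem positivity}, and specially constructed cutoff functions on the cylindrical end. One minor correction: the displayed identity $\|\Psi e\|_{\mathcal{E}^1}^2 = \|\Psi e\|_{\mathcal{E}^0}^2 + \|D\Psi e\|_{\mathcal{E}^0}^2$ holds at the level of the $C^*(G)$-valued inner products, but after taking $C^*$-norms you only get $\le$. That inequality, together with $\|e\|_{\mathcal{E}^0},\|De\|_{\mathcal{E}^0}\le\|e\|_{\mathcal{E}^1}$, is all you need, so this does not damage the boundedness conclusion.

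The adjointability step, however, has a genuine gap: the pointwise (i.e.\ $\mathcal{E}^0$-) adjoint $\Psi^*$ is \emph{not} the $\mathcal{E}^1$-adjoint of $\Psi$ unless $[D,\Psi]=0$. A direct computation on $\Gamma^\infty_c(S)$, using $[D,\Psi^*]=-[D,\Psi]^*$, gives
\[
\langle \Psi e_1, e_2\rangle_{\mathcal{E}^1} - \langle e_1, \Psi^* e_2\rangle_{\mathcal{E}^1}
= \bigl\langle [D^2,\Psi]\,e_1,\, e_2\bigr\rangle_{\mathcal{E}^0},
\]
and $[D^2,\Psi]$ is a genuine first-order operator (already for $\Psi$ a scalar function). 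So your candidate adjoint fails the defining identity, and merely showing that $\Psi^*$ extends boundedly to $\mathcal{E}^1$ does not produce an adjoint for $\Psi$. The paper sidesteps this by using the $H^1(S)$-adjoint of $\Psi$: since $\langle e_1,e_2\rangle_{\mathcal{E}^1}(g)=\mu(g)^{-1/2}(e_1,ge_2)_{H^1(S)}$ and $G$ acts unitarily on $H^1(S)$, the $H^1$-adjoint is $G$-equivariant and is the correct $\mathcal{E}^1$-adjoint. Your approach can also be repaired without leaving the module picture: identify $\mathcal{E}^1$ isometrically with the graph of $D$ in $\mathcal{E}^0\oplus\mathcal{E}^0$, which is complemented because $D$ is regular; the adjointable operator $\left(\begin{smallmatrix}\Psi & 0\\ [D,\Psi] & \Psi\end{smallmatrix}\right)$ on $\mathcal{E}^0\oplus\mathcal{E}^0$ preserves this graph, and the restriction of an adjointable operator to a complemented invariant submodule is adjointable.
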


The proof uses the next lemma. To state it, let $H^1(S)$ be the completion of $\Gamma_c^\infty(S)$ with respect to the inner product 
$$(\relbar, \relbar)_{H^1(S)}=(\relbar,\relbar )_{L^2(S)}+ ( D \relbar,D \relbar)_{L^2(S)}.$$
%For a function $h$ on $M$ and $g \in G$, we will write $g\cdot h$ for the function mapping $m \in M$ to $h(g^{-1}m)$.
\begin{lemma}
	\label{lem positivity}
	Let $M$ and $S$ be as in Proposition \ref{prop:boundedhilbert}. Let $\Theta$ be a bounded, positive operator on $H^1(S)$ such that
	\begin{itemize}
	\item $\Theta$ preserves the subspace $\Gamma_c^\infty(S)$;		
 	\item for any $e\in \Gamma_c^\infty(S)$, the function $a\colon G\rightarrow\mathbb{\mathbb{R}}$ given by $g\mapsto (\Theta(ge),ge)_{H^1(S)}$ has compact support in $G$.
 	\end{itemize}	
 	Then 
	\[
	\int_G
	\left\langle g\Theta g^{-1}  e,e\right\rangle_{\mathcal{E}^1}\, dg
	\]
	is a positive element of $C^*(G)$.
\end{lemma}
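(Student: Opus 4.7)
The plan is to adapt the strategy of Proposition~\ref{prop E0} to the Sobolev module $\cE^1$. First I would let $Q \coloneqq \Theta^{1/2}$ be the positive square root of $\Theta$ on $H^1(S)$, defined by Borel functional calculus; this is bounded, self-adjoint and positive. The hypothesis that $g \mapsto (\Theta(ge), ge)_{H^1(S)} = \|Q(ge)\|_{H^1(S)}^2$ has compact support in $G$ then says exactly that $g \mapsto Q(ge)$ is a compactly supported $H^1(S)$-valued function on $G$.

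Given any unitary representation $\pi\colon G \to \mathcal{U}(H)$ on a Hilbert space $H$ and any $h \in H$, I would next define the vector
\[
v \coloneqq \int_G \mu(g)^{-1/2}\, Q(ge) \otimes \pi(g)h\, dg \in H^1(S) \otimes H,
\]
which converges as a Bochner integral thanks to the compact-support property. The heart of the argument is then to show that $\|v\|_{H^1(S)\otimes H}^2 = \bigl(\pi(A)h, h\bigr)_H$, where $A \coloneqq \int_G \langle g\Theta g^{-1}e, e\rangle_{\cE^1}\, dg$. To establish this I would expand the squared norm via the tensor-product inner product, use self-adjointness of $Q$ on $H^1(S)$ to rewrite $(Q(ge), Q(g'e))_{H^1(S)} = (\Theta(ge), g'e)_{H^1(S)}$, perform the substitution $g' = gk$ (whose Jacobian is trivial by left-invariance of Haar measure), invoke unitarity of the $G$-actions on both $H^1(S)$ and on $H$, and handle the resulting modular-function factors via the identity $\int_G F(g^{-1})\, dg = \int_G F(g)\mu(g)^{-1}\, dg$. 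After these manipulations the $g$-integral collects into $\langle Te, e\rangle_{\cE^1}(k) = A(k)$, where $T \coloneqq \int_G g\Theta g^{-1}\, dg$ is the $G$-averaged operator on $\Gamma_c^\infty(S)$, so that $\|v\|^2$ matches $\int_G A(k)(\pi(k)h, h)_H\, dk = (\pi(A)h, h)_H$. This is directly parallel to the computation in Proposition~5.4 of \cite{Guo18} that underlies Proposition~\ref{prop E0}, the difference being that $\Theta$, and hence $Q$, is not $G$-invariant here, so the averaging over $G$ is precisely what is needed to produce a $G$-equivariant operator $T$ and thus a self-adjoint element of $C^*(G)$.

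Since $\|v\|^2 \geq 0$ for every such $\pi$ and $h$, positivity of $A$ in $C^*(G)$ follows. The main obstacle will be the careful modular-function bookkeeping in the computation of $\|v\|^2$, together with the verification that the averaged operator $T$ is $G$-equivariant on $\Gamma_c^\infty(S)$; this $G$-equivariance is what ultimately makes $A = \langle Te, e\rangle_{\cE^1}$ self-adjoint in $C^*(G)$, which is a prerequisite for positivity to hold at all.
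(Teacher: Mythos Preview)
Your proposal is correct and follows essentially the same approach as the paper's proof: take the positive square root $Q$ of $\Theta$ on $H^1(S)$, use the compact-support hypothesis to form the vector $v = \int_G \mu(g)^{-1/2} Q(ge)\otimes \pi(g)h\,dg$ in $H^1(S)\otimes H$, and compute $\|v\|^2$ to obtain $\pi(A)\geq 0$ for every unitary representation $\pi$, whence $A\geq 0$ in $C^*(G)$. The paper's proof is slightly terser, referring back to the computation in Proposition~\ref{prop E0} (and Proposition~5.4 of \cite{Guo18}) rather than spelling out the substitution and modular-function steps, but the argument is the same; your remark about the averaged operator $T$ being $G$-equivariant is a helpful gloss, though note that self-adjointness of $A$ need not be verified separately once $\pi(A)\geq 0$ holds for all $\pi$.
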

\begin{proof}
	Let $Q$ be the positive square root of $\Theta$ in $\cB(H^1(S))$.
%	Let $k_{Q{\cut}Q}$ denote the Schwartz kernel of $Q{\cut}Q$, so that $k_{Q{\cut}Q}$ is supported within a subset $\tilde{K}\times\tilde{K}\subseteq M\times M$, where the set $\tilde{K}\subseteq M$ has compact intersection with each $G$-orbit in $M$. We have
%	$$k_{Q{\cut}Q}(x,y)=\int_M k_Q(x,z){\cut}(z)k_Q(z,y)\,dz,$$ 
%	where $k_Q$ is the Schwartz kernel of $Q$. 
		Since $a$ has compact support, and $(\Theta(ge),ge)_{H^1(S)}= ( Q(ge),Q(ge))_{H^1(S)}$,
%	$$\langle\Theta(ge),ge\rangle_{H^1(S)}=\langle Q(ge),Q(ge)\rangle_{H^1(S)}.$$
	%(\Todo: this proof is problematic; ${\cut}$ does not have a square root in $H^1(S)$. Also multiplication by a function $f$ does not define a symmetric operator on $H^1(S)$ unless $f$ is constant...)
	%Since supp$({\cut})$ has compact intersection with each orbit, and the support of $ge$ lies within a fixed cocompact set independent of $g$, 
%	Note that $\textnormal{supp}(a)$ is compact, by properness of the $G$-action. 
	the map $G\to H^1(S)$ defined by $g\mapsto Q(ge)$ has compact support in $G$.
As in the proof of Proposition \ref{prop E0}, one finds that for any unitary representation $\pi$ of $G$ on a Hilbert space $H$ and $h\in H$,
\[	
	v\coloneqq\int_G\mu(g)^{-1/2}Q(ge)\otimes\pi(g)h\,dg
\]
	is a well-defined vector in $H^1(S)\otimes H$, and that 
\[
\int_G \int_G \left\langle gQ^2 g^{-1}e,e\right\rangle_{\mathcal{E}^1}(g') \,dg\cdot(\pi(g')h,h)_H\,dg'
=
\|v\|_{H^1(S) \otimes H} \geq 0.
\]	
Similarly to the proof of Proposition \ref{prop E0}, we deduce that
$\int_G \langle g \Theta g^{-1}e,e\rangle_{\mathcal{E}^1} \,dg$ is a positive element of $C^*(G)$.	
\end{proof}

%\Todo: change this proof: comment on what happens inside $K$, and argue that $\Psi_2$ extends to $H^1(S)$. Refer to Propositions \ref{prop E0} and \ref{prop:boundedhilbert} where we use homotopy invariance.
	\begin{proof}[Proof of Proposition~\ref{prop:boundedhilbert}]
Because of the forms of $M$ and $S$, there is a canonical (up to equivalence) first Sobolev norm $\|\cdot \|_{1}$ on sections of $S$ that is $G$-invariant, and invariant under the relevant class of translations in the factor $(0, \infty)$ of $N \times (0,\infty)$.  Because $\Psi$ is an order zero differential operator constant on the factor $(0, \infty)$, it defines a bounded operator with respect to $\|\cdot \|_{1}$. Due to the form of $D$, the norm on $H^1(S)$ is equivalent to $\|\cdot \|_{1}$, and so $\Psi$ defines a bounded operator on $H^1(S)$.	
	
	Let $\pi_N\colon M\setminus K\cong N\times(0,\infty)\rightarrow N$ be the natural projection. Let ${\cut}$ be a cutoff function on $M$ such that 
	\[
	{\cut}|_{M \setminus K} = \pi_N^*{\cut}_N
	\]
for a cutoff function ${\cut}_N$ on $N$.	
 Let $\Psi^*$ and ${\cut}^*$ denote the respective adjoints of $\Psi$ and ${\cut}$ in $\cB(H^1(S))$.  Then the operator 
\[
\Psi_1\coloneqq\frac{{\cut}\Psi^*\Psi+\Psi^*\Psi{\cut}^*}{2}
\]
is bounded and self-adjoint on $H^1(S)$ with norm at most $\norm{\Psi}^2\norm{{\cut}}$, where the norms are taken in $\cB(H^1(S))$. 

Let ${\cut}'$ be a smooth, nonnegative function on $M$   that is identically $1$ on the support of $c$, and such that
	\[
	{\cut}'|_{M \setminus K} = \pi_N^*{\cut}'_N
	\]
for a compactly supported function ${\cut}'_N$ on $N$.	Consider the endomorphism 
$\Psi_2\coloneqq({\cut}')^*{\cut}'\norm{\Psi}^2\norm{{\cut}}-\Psi_1$ of $S$. For the same reasons as for $\Psi$, it defines a bounded operator on $H^1(S)$. 
Fix $e \in \Gamma_c^{\infty}(S)$. Because $\Psi$ is a positive bounded operator on $H^1(S)$, 
 we may apply  Lemma \ref{lem positivity} with $\Theta = \Psi_2$ to conclude that
%is a positive operator on $H^1(S)$. Applying Lemma \ref{lem positivity} to $\Psi_2$, we see that
%		$$M_{{\cut}_1}\coloneqq \sup_{x\in M}\left\{\int_G g({\cut}_1^2)(x)\,dg\right\},$$
%		and let $M_0$ be the infimum of $M_{{\cut}_1}$ over all functions ${\cut}_1$ satisfying the above conditions. Note that $M_0$ depends only on the support of ${\cut}\Psi^*\Psi+\Psi^*\Psi{\cut}$. Since $\int_G g(\Psi_1)\,dg = \Psi^*\Psi$, Lemma \ref{lem positivity} applied to $\Psi_2$ shows that for any $e\in \Gamma_c^\infty(S)$, (\Todo: is $\Psi^*$ a pseudodifferential operator? Does it preserve smoothness, so that the $\mathcal{E}^1$-inner product on its image makes sense?)
\beq{eq Psi2 nonneg}
\int_G
\left\langle \left( g \Psi_2 g^{-1}\right) e,e\right\rangle_{\mathcal{E}^1} \,dg
=\int_G \left\langle\left(g ( ({\cut}')^*{\cut}' ) g^{-1}\norm{\Psi}^2\norm{{\cut}}\right)e,e\right\rangle_{\mathcal{E}^1}\,dg - \langle \Psi^*\Psi e, e\rangle_{\mathcal{E}^1}
\eeq
		is positive in $C^*(G)$. Define  $b\colon M\times G\rightarrow\mathbb{R}$ by
\[
b(x,g)\coloneqq   b_x(g) \coloneqq  {\cut}'(g^{-1}x).
\]
		By construction of ${\cut}'$, the quantities
\[		
C_1(x)\coloneqq \int_G {\cut}'(g^{-1}x)^2\,dg,\qquad C_2(x)\coloneqq \norm{d{\cut}'}^2_\infty\cdot\vol\left(\supp_G(b_x)\right)
\]
are bounded as functions of $x \in M$		
 (see also Remark \ref{rem:constants} below). A direct calculation shows that
		\begin{align*}
		\left\| \int_G \left\langle\left( g(({\cut}')^*{\cut}') g^{-1}\norm{\Psi}^2\norm{{\cut}}\right)e,e\right\rangle_{\mathcal{E}^1} \,dg\right\|_{C^*(G)}&=\norm{\Psi}^2\norm{{\cut}}\left\|\int_G\left\langle{\cut}'g^{-1}e,{\cut}'g^{-1}e\right\rangle_{\mathcal{E}^1}dg\right\|_{C^*(G)}\\
		&\leq\norm{\Psi}^2\norm{{\cut}}\left( \|C_1\|_{\infty} \norm{e}^2_{\mathcal{E}^1}+ \|C_2\|_{\infty} \norm{e}^2_{\mathcal{E}^0}\right)\\
		&\leq C_3\norm{\Psi}^2\norm{e}^2_{\mathcal{E}^1},
		\end{align*}
		for some constant $C_3$. 
		%(\Todo: is it ok here that $g (c'^* c) g^{-1}$ is not a multiplication operator?) 
Together with positivity of \eqref{eq Psi2 nonneg}, this implies that
		$$\norm{\Psi e}_{\mathcal{E}^1}^2=\norm{\langle e,\Psi^*\Psi e\rangle_{\mathcal{E}^1}}_{C^*(G)}\leq C_3\norm{\Psi}^2\norm{e}^2_{\mathcal{E}^1},$$
	 	so that $\Psi$ extends to an operator on all of $\mathcal{E}^1$. Similarly, the $H^1(S)$-adjoint $\Psi^*$ defines an operator on $\mathcal{E}^1$ that one checks is the adjoint of $\Psi$.
	\end{proof}
\begin{remark}
\label{rem:constants}
As can be seen from the proof, the conclusion of Proposition \ref{prop:boundedhilbert} holds more broadly for any $M$ on which the functions $C_1$ and $C_2$ on $M$ are  bounded.
\end{remark}

\subsection{Vanishing}

Two cases where the index of Definition \ref{def index} vanishes are straightforward to prove, but we state them here because they will be used in various places.

%A consequence of Lemma \ref{lem htp invar} is the following analogue of Corollary 2.5 in \cite{Bunke95}.
\begin{lemma}\label{lem invtble}
If \eqref{eq est Phi} holds on all of $M$, then $\ind_G(D + \Phi) = 0$.
\end{lemma}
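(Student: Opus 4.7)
The plan is to show that under the hypothesis, the operator $D+\Phi$ is itself uniformly positive as an unbounded self-adjoint operator, which lets us take the auxiliary function $f$ to be zero in the formula \eqref{eq def F}; the resulting bounded transform is then a self-adjoint involution, giving a degenerate Kasparov cycle.

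First I would verify the operator inequality $(D+\Phi)^2\geq C$ on $\Gamma^\infty_c(S)$. Expanding,
\[
(D+\Phi)^2=D^2+(D\Phi+\Phi D)+\Phi^2.
\]
The first term is nonnegative, and the pointwise endomorphism inequality $\Phi^2\geq \|D\Phi+\Phi D\|+C$, assumed now on all of $M$, gives $\Phi^2+(D\Phi+\Phi D)\geq C$ as a fibrewise Hermitian endomorphism. Combining yields $(D+\Phi)^2\geq C$ as an operator inequality on smooth compactly supported sections, and by essential self-adjointness of $D+\Phi$ (which follows from completeness of $M$) the closure of $D+\Phi$ is a self-adjoint regular operator on $\cE^0$ whose spectrum is contained in $(-\infty,-\sqrt{C}\,]\cup[\sqrt{C},\infty)$. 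In particular $D+\Phi$ is invertible with bounded inverse on $\cE^0$.

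Next I would argue that we may take the cocompactly supported function $f$ in \eqref{eq def F} to be zero. Since the class $[\cE,F]$ is independent of the choice of $f$ by Theorem \ref{thm Guo}, I would pick any admissible cocompactly supported $f\geq 0$ and consider the straight-line homotopy $f_t:=tf$ for $t\in[0,1]$. For every $t$, the operator $(D+\Phi)^2+f_t\geq C>0$, so the formula
\[
F_t:=(D+\Phi)\bigl((D+\Phi)^2+f_t\bigr)^{-1/2}
\]
defines an adjointable operator on $\cE$, and $t\mapsto F_t$ gives an operator homotopy of Kasparov cycles; hence $[\cE,F]=[\cE,F_0]$. Alternatively one can simply observe that the construction of Theorem \ref{thm Guo} goes through with $f=0$ whenever $(D+\Phi)^2$ is bounded below.

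Finally, with $f=0$ the bounded transform reads $F_0=(D+\Phi)|D+\Phi|^{-1}$. Because $D+\Phi$ is self-adjoint, regular and invertible, functional calculus shows $F_0$ is self-adjoint with $F_0^2=1$ exactly, not merely modulo compacts; it is also odd for the grading on $\cE$. Thus $(\cE,F_0)$ is a degenerate Kasparov $(\C,C^*(G))$-cycle, so its class in $\KK(\C,C^*(G))=K_0(C^*(G))$ vanishes. The main subtlety, and the only place where a little care is needed, is justifying the homotopy of Kasparov cycles (or equivalently, that the construction of Theorem \ref{thm Guo} extends to $f=0$ in the present invertible setting); once that is in hand, the remainder is immediate.
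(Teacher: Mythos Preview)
Your proof is correct and follows essentially the same idea as the paper's: both use the global lower bound $(D+\Phi)^2\geq C$ to exhibit an operator homotopy to a degenerate cycle. The paper is terser, keeping the given $f$ and observing that $F=(D+\Phi)((D+\Phi)^2+f)^{-1/2}$ is invertible (since $(D+\Phi)^2((D+\Phi)^2+f)^{-1}$ is bounded below), then homotoping $F$ to its polar part $F(F^*F)^{-1/2}$; you instead homotope $f$ to $0$ to reach the sign operator directly. These are equivalent manoeuvres, and your explicit verification of $(D+\Phi)^2\geq C$ is a useful detail the paper leaves implicit.
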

\begin{proof}
In this setting, the operator $F$ in \eqref{eq def F} is invertible. This implies that the $\KK$-cycle $(\cE, F)$
% defines the zero class in $KK(\C, \C^*(G))$.
 is operator homotopic to the degenerate cycle $(\cE, F(F^*F)^{-1/2})$. 
%
%] \in \KK(\C, C^*(G))$
%In this setting, $t\Phi$ is admissible for all $t \geq 1$. Choosing $t$ such that $D + t\Phi$ is invertible as an unbounded operator on $\cE$, we find that $\ind_G(D + \Phi) = \ind_G(D + t\Phi)  = 0$. (\Todo: do we need to use Sobolev modules here?)
\end{proof}

\begin{lemma}\label{lem cocpt}
If $M/G$ is compact, and $D$ and $\Phi$ are of the forms \eqref{eq D D0} and \eqref{eq Phi Phi0}, then $\ind_G(D)=0$.
\end{lemma}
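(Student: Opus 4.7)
My plan is to exploit the doubled structure $S = S_0 \oplus S_0$ in \eqref{eq D D0}: the two off-diagonal blocks of $D$ are the same self-adjoint operator $D_0$, so $D$ admits a symmetry that collapses its Kasparov cycle to a degenerate one. If instead one reads the conclusion as $\ind_G(D+\Phi) = 0$, the same output follows first by applying the straight-line homotopy $s \mapsto D + s\Phi$, which is a family of admissible Callias-type operators when $M/G$ is compact (the estimate \eqref{eq est Phi} is vacuous with $Z = M$), and invoking the homotopy invariance of Proposition~\ref{prop htp invar}.

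I would introduce the constant fibrewise endomorphism
\[
V \coloneqq \begin{pmatrix} 0 & i \\ -i & 0 \end{pmatrix}
\]
of $S$. It is smooth, $G$-equivariant, uniformly bounded, odd with respect to the grading, self-adjoint, and satisfies $V^2 = 1$. By Proposition~\ref{prop E0} it defines an odd, self-adjoint, unitary element of $\mathcal{L}(\mathcal{E}^0)$. A direct computation using \eqref{eq D D0} gives $VD + DV = 0$; in particular $V$ commutes with $D^2$, and so by functional calculus it anticommutes with the bounded transform $F_D \coloneqq D(1+D^2)^{-1/2}$.

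By Definition~\ref{def index}, with $\Phi = 0$ and $f \equiv 1$ (both admissible choices, since $M/G$ is compact makes $f\equiv 1$ a $G$-invariant, cocompactly supported function), $\ind_G(D)$ is represented by $[\mathcal{E}^0, F_D] \in K_0(C^*(G))$. Cocompactness of $M/G$ together with Theorem~\ref{thm Rellich} implies that the resolvent $(1+D^2)^{-1}$ is $C^*(G)$-compact on $\mathcal{E}^0$, so $F_D^2 - 1 = -(1+D^2)^{-1} \in \mathcal{K}(\mathcal{E}^0)$. I then consider the path
\[
F_t \coloneqq \cos(t)\, F_D + \sin(t)\, V, \qquad t \in [0, \pi/2].
\]
Each $F_t$ is odd, self-adjoint, and adjointable, and the anticommutation $VF_D + F_D V = 0$ together with $V^2 = 1$ gives $F_t^2 - 1 = \cos^2(t)(F_D^2 - 1) \in \mathcal{K}(\mathcal{E}^0)$. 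Hence $(\mathcal{E}^0, F_t)$ is a norm-continuous operator homotopy of Kasparov $(\mathbb{C}, C^*(G))$-cycles from the cycle representing $\ind_G(D)$ at $t = 0$ to the degenerate cycle $(\mathcal{E}^0, V)$ at $t = \pi/2$. Thus $\ind_G(D) = 0$.

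The technical step I expect to require the most care is verifying $(1+D^2)^{-1} \in \mathcal{K}(\mathcal{E}^0)$. This should follow by factoring the resolvent through the Sobolev module $\mathcal{E}^2$ and applying Theorem~\ref{thm Rellich} to the cutoff function $f \equiv 1$, which is cocompactly supported because $M/G$ is compact, along the same lines as the standard construction of $G$-Fredholm representatives in the cocompact assembly map.
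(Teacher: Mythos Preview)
Your proof is correct and essentially the same as the paper's. The paper first notes that since $M/G$ is compact, $\Phi$ is bounded and the cycle $(\cE,F)$ for $D+\Phi$ is operator-homotopic to $(\cE, D(D^2+1)^{-1/2})$; then it invokes the abstract fact that any Kasparov $(\C,A)$-cycle whose operator has the form $\begin{pmatrix}0&F_0\\F_0&0\end{pmatrix}$ with $F_0=F_0^*$ represents the zero class. Your argument simply spells out the standard proof of that abstract fact: the odd self-adjoint unitary $V=\begin{pmatrix}0&i\\-i&0\end{pmatrix}$ anticommutes with such an $F$, so the straight-line (or rotational) homotopy $F_t=\cos(t)F_D+\sin(t)V$ connects the given cycle to the degenerate one $(\cE,V)$. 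Your remark about first reducing $\ind_G(D+\Phi)$ to $\ind_G(D)$ by a homotopy in $\Phi$ matches the paper's opening sentence.
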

\begin{proof}
In this setting, $\Phi$ is bounded, and the cycle $(\cE, F)$ is operator homotopic to $\Bigl( \cE, \frac{D}{\sqrt{D^2+1}}\Bigr)$. 

In general, let $A$ be a $C^*$-algebra, let  $\cE_0$ be a Hilbert $A$-module, and set $\cE \coloneqq  \cE_0 \oplus \cE_0$.
and an adjointable operator $F$ on $\cE$ such that $(\cE, F)$ is a Kasparov $(\C, A)$-cycle, and $F$ is of the form
\beq{eq F F0}
F = \begin{pmatrix}
0 & F_0 \\ F_0 & 0
\end{pmatrix},
\eeq
for a (necessarily self-adjoint) $F_0 \in \calL(\cE_0)$. Then $[\cE, F] = 0 \in K_0(A)$. Because $\frac{D}{\sqrt{D^2+1}}$ is of the form \eqref{eq F F0}, the claim follows. 
\end{proof}

\subsection{Homotopy invariance}

The index of Definition \ref{def index} has a homotopy invariance property analogous to Proposition 4.1 in \cite{Cecchini16}. This homotopy invariance applies in a more general setting than Callias-type operators.

Let $P$ be an odd, $G$-equivariant Dirac-type operator on a $\Z/2$-graded Clifford module $\cS$, and let $\Psi$ be an odd, smooth $G$-equivariant, uniformly bounded Hermitian vector bundle endomorphism of $\cS$. Fix $t_0 < t_1 \in \R$. For $t \in [t_0, t_1]$, consider the operator $P_t \coloneqq  P+t\Psi$.
\begin{proposition}[Homotopy invariance] \label{prop htp invar}
Suppose that 
%there is a nonnegative, $G$-invariant, cocompactly supported function $f \in C^{\infty}(M)$ such that 
%for all $t \in [t_0, t_1]$, the operator $P_t^2+f\colon \cE^2 \to \cE^0$ is invertible, with inverse in $\calL(\cE^0,  \cE^2)$.
\begin{enumerate}
\item for $j=0,1$, the endomorphism $\Psi$ defines an adjointable operator on the Sobolev module $\cE^j$ of Definition \ref{def sobolev};
\item there is a nonnegative, $G$-invariant, cocompactly supported function $f \in C^{\infty}(M)$ such that for all $t \in [t_0, t_1]$, the operator $P_t^2+f\colon \cE^2 \to \cE^0$ is invertible, with inverse in $\calL(\cE^0,  \cE^2)$.
\end{enumerate}
Then $\ind_G(P_t) \in K_0(C^*(G))$ is independent of $t \in [t_0, t_1]$.
\end{proposition}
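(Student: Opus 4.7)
The plan is to exhibit a norm-continuous family of Kasparov $(\C, C^*(G))$-cycles interpolating between those representing $\ind_G(P_{t_0})$ and $\ind_G(P_{t_1})$, so that operator homotopy invariance of $\KK$-classes gives the result.

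First, for each $t \in [t_0, t_1]$, I would define
\[
F_t \coloneqq P_t\bigl(P_t^2 + f\bigr)^{-1/2},
\]
with $f$ as in hypothesis (2). That hypothesis yields $(P_t^2+f)^{-1/2} \in \calL(\cE^0, \cE^1)$, while $P_t \in \calL(\cE^1, \cE^0)$ by hypothesis (1) together with the boundedness of $P\colon \cE^1 \to \cE^0$, so $F_t \in \calL(\cE^0)$. An argument parallel to the proof of Theorem~\ref{thm Guo} in \cite{Guo18}---with the Callias-type positivity used there replaced by the invertibility in hypothesis (2)---shows that $(\cE^0, F_t)$ is a Kasparov cycle representing $\ind_G(P_t)$; the compactness of $F_t^2 - 1$ and $F_t - F_t^*$ modulo $C^*(G)$ reduces, via Theorem~\ref{thm Rellich}, to the cocompact support of $f$.

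The heart of the argument is then to show that $t \mapsto F_t$ is continuous in the operator norm on $\calL(\cE^0)$. This exhibits $\{(\cE^0, F_t)\}_{t \in [t_0, t_1]}$ as an operator homotopy of Kasparov cycles, so all the classes $[\cE^0, F_t]$ coincide in $\KK(\C, C^*(G))$. I would decompose
\[
F_t - F_s = (t-s)\,\Psi\,(P_t^2+f)^{-1/2} + P_s\bigl[(P_t^2+f)^{-1/2} - (P_s^2+f)^{-1/2}\bigr].
\]
The first summand has norm $O(|t-s|)$ directly from hypothesis (1) and the uniform-in-$t$ bound on $(P_t^2+f)^{-1/2} \in \calL(\cE^0)$. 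For the second summand, I would apply the integral identity
\[
A^{-1/2} - B^{-1/2} = \frac{1}{\pi}\int_0^\infty \bigl[(A+\lambda)^{-1} - (B+\lambda)^{-1}\bigr]\lambda^{-1/2}\,d\lambda,
\]
with $A = P_t^2 + f$ and $B = P_s^2 + f$, followed by the resolvent identity $(A+\lambda)^{-1} - (B+\lambda)^{-1} = (B+\lambda)^{-1}(B-A)(A+\lambda)^{-1}$, together with the expansion
\[
A - B = (t-s)(P\Psi + \Psi P) + (t^2 - s^2)\Psi^2,
\]
whose right-hand side maps $\cE^1 \to \cE^0$ with norm $O(|t-s|)$ by hypothesis (1).

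The main obstacle is the analytic control of this integral: one must prove absolute convergence of the operator-norm integrand uniformly in $(t,s)$ and extract an overall bound of order $|t-s|$. Hypothesis (1), which gives that $\Psi$ defines adjointable operators on both $\cE^0$ and $\cE^1$, is essential here; it allows one to distribute the factor $P_s$ past the intermediate resolvent without invoking unbounded operations, and it bounds the operator $P\Psi + \Psi P$ as an element of $\calL(\cE^1, \cE^0)$. Hypothesis (2) provides the uniform spectral estimates on $(A+\lambda)^{-1}$ and $(B+\lambda)^{-1}$ in $\calL(\cE^0)$, along with the smoothing $(A+\lambda)^{-1}\colon \cE^0 \to \cE^2$ and its counterpart for $B$, which together furnish enough $\lambda$-decay at both endpoints for integrability. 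Once norm continuity of $t \mapsto F_t$ is established, operator homotopy in $\KK$-theory yields $\ind_G(P_t) = [\cE^0, F_t]$ independent of $t \in [t_0, t_1]$.
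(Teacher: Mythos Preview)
Your proposal is correct and follows essentially the same approach as the paper. The paper's own proof simply refers to Proposition~4.1 in \cite{Cecchini16} with the equivariant Rellich lemma (Theorem~\ref{thm Rellich}) and the resolvent estimates from \cite{Guo18} substituted in; what you have written is precisely the standard operator-homotopy argument underlying that reference, with the integral formula for the difference of square roots, the resolvent identity, and the decay estimates on $(P_t^2+f+\lambda)^{-1}$ combining to give norm continuity of $t\mapsto F_t$.
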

\begin{proof}
%By Proposition \ref{prop:boundedhilbert}, $\Psi$ defines an adjointable operator on the Sobolev module $\cE^j$ for $j=0,1$ (Definition \ref{def sobolev}). 
The proof proceeds identically to the proof of Proposition 4.1 in \cite{Cecchini16}, with the exceptions that Theorem \ref{thm Rellich} should be substituted for Lemma 4.2 in \cite{Cecchini16}, and Lemmas 4.4 and 4.6(a) in \cite{Guo18} should be substituted for Lemmas 1.4 and 1.5 in \cite{Bunke95}, respectively.
\end{proof}

\begin{corollary}\label{cor cocpt change}
If $D + \Phi$ is a $G$-Callias-type operator on $S \to M$, and $\Psi$ is a $G$-equivariant, odd vector bundle endomorphism of $S$ that equals zero outside a cocompact set, then $\ind_G(D+\Phi)=\ind_G(D+\Phi+\Psi)$.
\end{corollary}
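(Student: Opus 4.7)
The plan is to apply the homotopy invariance of the $G$-Callias-type index (Proposition \ref{prop htp invar}) to the linear family
\[
P_t \coloneqq (D+\Phi) + t\Psi, \qquad t \in [0,1],
\]
with base Dirac-type operator $P = D+\Phi$ (which is Dirac-type, since $\Phi$ is of order zero and contributes nothing to the principal symbol) and perturbing endomorphism $\Psi$. Then $P_0 = D+\Phi$ and $P_1 = D+\Phi+\Psi$, so the equality $\ind_G(P_0) = \ind_G(P_1)$ furnished by Proposition \ref{prop htp invar} is exactly the content of the corollary. The task thus reduces to checking the two hypotheses of Proposition \ref{prop htp invar}.

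For hypothesis (1), the first observation is that $\Psi$, being smooth, $G$-equivariant and cocompactly supported, is uniformly bounded. Proposition \ref{prop E0} then gives $\Psi \in \calL(\cE^0)$ directly. For adjointability on $\cE^1$, I would adapt the proof of Proposition \ref{prop:boundedhilbert} in the spirit of Remark \ref{rem:constants}; the crucial point is that the commutator
\[
[D+\Phi, \Psi] = [D, \Psi] + [\Phi, \Psi]
\]
is itself a smooth, $G$-equivariant, cocompactly supported (hence uniformly bounded) bundle endomorphism of $S$. Combining boundedness of $\Psi$ on $\cE^0$ from Proposition \ref{prop E0} with boundedness of $[D+\Phi, \Psi]$ on $\cE^0$ should give boundedness, and then adjointability, of $\Psi$ on $\cE^1$.

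For hypothesis (2), I would let $K \subset M$ be a $G$-invariant cocompact set containing both the set $Z$ of Definition \ref{def Phi} and the support of $\Psi$. Outside $K$, one has $P_t = D+\Phi$, so that the admissibility estimate \eqref{eq est Phi} together with $D^2 \geq 0$ yields a uniform positive lower bound on $P_t^2$ there, independently of $t$. The goal is then to exhibit a single $G$-invariant, cocompactly supported, nonnegative function $f$, supported in a neighbourhood of $K$ and sufficiently large on $K$, so that $P_t^2 + f$ is invertible $\cE^2 \to \cE^0$ with inverse in $\calL(\cE^0,\cE^2)$, uniformly in $t \in [0,1]$.

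The hard part will be hypothesis (2): the difference $P_t^2 - (D+\Phi)^2 = t\{D+\Phi, \Psi\} + t^2\Psi^2$ contains the anticommutator $\{D+\Phi, \Psi\} = [D+\Phi, \Psi] + 2\Psi(D+\Phi)$, and the term $2\Psi(D+\Phi)$ is unbounded, so a naive perturbative estimate fails. The resolution is to exploit the cocompact support of $\Psi$, which localizes this term to $K$, and to choose $f$ large enough on $K$ to dominate the perturbation for all $t \in [0,1]$ simultaneously; compactness of $[0,1]$ makes this possible. With both hypotheses verified, Proposition \ref{prop htp invar} gives $\ind_G(D+\Phi) = \ind_G(D+\Phi+\Psi)$.
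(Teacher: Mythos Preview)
Your overall strategy---apply Proposition~\ref{prop htp invar} to the linear homotopy $P_t = (D+\Phi) + t\Psi$---is exactly what the paper does. The differences are in how the two hypotheses are verified.

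For hypothesis~(1), the paper simply cites Proposition~3.5 of \cite{Guo18}, which directly gives that a cocompactly supported smooth endomorphism acts adjointably on every Sobolev module $\cE^j$. Your detour through Proposition~\ref{prop:boundedhilbert} and Remark~\ref{rem:constants} is unnecessary: those results carry geometric hypotheses on $M$ (cylindrical ends, or bounded $C_1$, $C_2$) that are neither available nor needed here. Your commutator observation is the right idea and is essentially what underlies Proposition~3.5 of \cite{Guo18}: since $[D+\Phi,\Psi]$ is a cocompactly supported bundle endomorphism, boundedness and adjointability on $\cE^1$ follow from two applications of the $\cE^0$ result (Proposition~\ref{prop E0}). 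So you should either cite \cite{Guo18} or run that commutator argument directly, and drop the reference to Proposition~\ref{prop:boundedhilbert}.

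For hypothesis~(2), the paper's argument is cleaner than yours and avoids the ``hard part'' you identify. The key observation you are missing is that, since $\Psi$ vanishes outside a cocompact set, $\Phi + t\Psi$ is again an admissible endomorphism in the sense of Definition~\ref{def Phi} for every $t\in[0,1]$; the estimate \eqref{eq est Phi} holds outside a fixed cocompact set independent of $t$. Then Theorem~5.6 of \cite{Guo18} supplies the required invertibility for each $t$, and because the admissibility data are uniform in $t$, the function $f$ can be chosen once for all $t$. Your plan to dominate the first-order perturbation $2\Psi(D+\Phi)$ by making $f$ large on $K$ is exactly what the proof of that theorem in \cite{Guo18} accomplishes, so you would be reproving it; better to invoke it.
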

\begin{proof}
We set $P = D + \Phi$ and apply Proposition \ref{prop htp invar}. Since $\Psi$ is cocompactly supported, the first condition in Proposition \ref{prop htp invar} holds by Proposition 3.5 in \cite{Guo18}. %(\Todo: refer to our propositions?)
 For the same reason, $\Phi+t\Psi$ is a Callias-type potential for all $t \in \R$, so by Theorem 5.6 in \cite{Guo18}, the second condition in Proposition \ref{prop htp invar} holds for $t \in [0,1]$, where a priori the function $f$ may depend on $t$. But since $\Psi$ is zero outside a cocompact set, we can choose $f$ independent of $t$. The claim then follows from Proposition \ref{prop htp invar}.
\end{proof}

%\begin{remark}
%	\label{rm cocpt change}
%	Applying Proposition \ref{prop htp invar} with $D+\Phi$ a $G$-Callias-type operator and $\Psi$ a cocompactly supported bundle endomorphism, one sees that $\ind_G(D+\Phi)=\ind_G(D+\Phi+\Psi)$.
%\end{remark}

\begin{remark}
In Proposition \ref{prop htp invar}, it is not assumed that $\Psi$ is a Callias-type potential in the sense of Definition \ref{def Phi}. We will use Proposition \ref{prop htp invar} in this greater generality in the proof of Lemma \ref{lem DC tilde}.
\end{remark}

\begin{remark}
Corollary \ref{cor cocpt change} can be used to give an alternative proof of Lemma \ref{lem invtble}: this corollary implies that in the setting of that lemma,
 \[
 \ind_G(D + \Phi) =  \ind_G(D - \Phi) =  \ind_G\bigl( (D + \Phi)^* \bigr) = - \ind_G(D + \Phi).
 \]
 See also Corollary 4.9 in \cite{Cecchini16}.
\end{remark}

\subsection{A relative index theorem}

We will use an analogue of Bunke's relative index theorem, Theorem 1.2 in \cite{Bunke95}. For $j=1,2$, let $M_j$, $S_j$, $D_j$ and $\Phi_j$, respectively, be as $M$, $S$, $D$ and $\Phi$ were before.
Suppose that there are a $G$-invariant hypersurface $N_j \subset M_j$ and a $G$-invariant tubular neighbourhood $U_j$ of $N_j$, and that there is a $G$-equivariant isometry $\psi\colon U_1 \to U_2$ such that
\begin{itemize}
\item 
 $\psi(N_1) = N_2$;
 \item  $\psi^*(S_2|_{U_2}) \cong S_1|_{U_1}$;
 \item $\psi^*(\nabla^{S_2}|_{U_2}) = \nabla^{S_1}|_{U_1}$, where $\nabla^{S_j}$ is the Clifford connection used to define $D_j$;  and
 \item  $\Phi_1|_{U_1}$ corresponds to $\Phi_2|_{U_2}$ via $\psi$. 
\end{itemize}

Suppose that $M_j = X_j \cup_{N_j} Y_j$ for closed, $G$-invariant subsets $X_j, Y_j \subset M_j$. We identify $N_1$ and $N_2$ via $\psi$ and write $N$ for this manifold when we do not want to distinguish between $N_1$ and $N_2$.
Using the map $\varphi$, form
\[
M_3 \coloneqq  X_1  \cup_N Y_2; \qquad M_4 \coloneqq  X_2 \cup_N Y_1.
\]
For $j=3,4$, let $S_j$, $D_j$ and $\Phi_j$  be obtained from the corresponding data on $M_1$ and $M_2$ by cutting and gluing along $U_1 \cong U_2$ via $\psi$.
\begin{theorem} \label{thm rel index}
In the above situation,
\[
\ind_G(D_1 + \Phi_1) + \ind_G(D_2 + \Phi_2) = \ind_G(D_3 + \Phi_3) +  \ind_G(D_4 + \Phi_4). 
\]
\end{theorem}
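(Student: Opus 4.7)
The plan is to adapt Bunke's classical proof of the relative index theorem (Theorem 1.2 in \cite{Bunke95}) to the $G$-equivariant Callias-type setting, using the three tools assembled above: homotopy invariance (Proposition \ref{prop htp invar}), vanishing for globally Callias-invertible operators (Lemma \ref{lem invtble}), and invariance under cocompact perturbations (Corollary \ref{cor cocpt change}).

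Step 1 is to arrange, without changing any of the four indices, that each operator $D_j + \Phi_j$ is uniformly invertible on the tubular neighbourhood $U_j$ of $N_j$. I would pick a $G$-invariant smooth cutoff $\chi$ supported in $U$ with $\chi \equiv 1$ on a smaller tubular neighbourhood $V$ of $N$, together with a $G$-equivariant odd self-adjoint bundle endomorphism $\sigma$ of $S|_U$ that anticommutes with Clifford multiplication by the unit normal $\hat n$. Replacing $\Phi_j$ by $\tilde\Phi_j \coloneqq \Phi_j + \lambda \chi \sigma$ for $\lambda$ large makes $\tilde\Phi_j^2$ dominate the commutator $\|[D_j, \tilde\Phi_j]\|$ on $V$, so the modified operator is uniformly invertible there. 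Because $\chi \sigma$ is cocompactly supported and is compatible with $\psi$ across the four manifolds, Corollary \ref{cor cocpt change} gives $\ind_G(D_j + \Phi_j) = \ind_G(D_j + \tilde\Phi_j)$ for $j=1,2$, and the identifications guarantee the modifications still glue to well-defined $\tilde\Phi_3, \tilde\Phi_4$ on $M_3, M_4$.

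Step 2 is a cut-and-reglue argument. Form the auxiliary disjoint union $\widehat M \coloneqq M_1 \sqcup M_2 \sqcup M_3^{\mathrm{op}} \sqcup M_4^{\mathrm{op}}$, where the superscript ``op'' denotes grading reversal on the Clifford module so that the total index on $\widehat M$ equals the alternating sum $\ind_G(D_1+\tilde\Phi_1) + \ind_G(D_2+\tilde\Phi_2) - \ind_G(D_3+\tilde\Phi_3) - \ind_G(D_4+\tilde\Phi_4)$, which is what we want to show vanishes. Each of the four pieces $X_1, X_2, Y_1, Y_2$ appears exactly twice in $\widehat M$, with opposite gradings, separated by copies of $N$ across which the operator is now invertible (by Step 1). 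Cutting $\widehat M$ along all four copies of $N$ and regluing the matching pairs $(X_j, X_j^{\mathrm{op}})$ and $(Y_j, Y_j^{\mathrm{op}})$ produces a new disjoint union $\widehat M'$ of four doubles. Because the regluing operation is supported in the region where the operators are invertible, Proposition \ref{prop htp invar} (with adjointability on $\cE^0$ and $\cE^1$ verified via Propositions \ref{prop E0} and \ref{prop:boundedhilbert}) shows that $\ind_G(\widehat M) = \ind_G(\widehat M')$. On each double the canonical orientation-reversing involution anticommutes with the doubled Dirac operator and potential, forcing the index to vanish by the standard involution argument, so $\ind_G(\widehat M') = 0$ and the theorem follows.

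The main obstacle will be executing the cut-and-reglue homotopy rigorously at the Hilbert $C^*(G)$-module level: one must construct an explicit smooth family of Riemannian metrics, Clifford module structures, and potentials interpolating between $\widehat M$ and $\widehat M'$, for which the hypotheses of Proposition \ref{prop htp invar} hold uniformly throughout, and in particular for which the perturbation endomorphisms define adjointable operators on both $\cE^0$ and $\cE^1$. The non-equivariant version of this construction is the content of Bunke's ``excision at a hypersurface'' argument in \cite{Bunke95}, and the Sobolev module formalism of \cite{Guo18} recalled in Section \ref{sec prop ind}, together with Propositions \ref{prop E0} and \ref{prop:boundedhilbert}, should provide precisely the analytic tools needed to push Bunke's construction through in the equivariant framework.
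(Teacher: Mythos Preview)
Your proposal diverges substantially from the paper's proof, and in fact from Bunke's own argument: Bunke's Theorem~1.14 in \cite{Bunke95} is \emph{not} a cut-and-reglue surgery, but a Clifford-symmetry argument, and that is exactly what the paper adapts. Concretely, the paper sets $\cE \coloneqq \cE_1 \oplus \cE_2 \oplus \cE_3^{\op} \oplus \cE_4^{\op}$ and $F \coloneqq F_1 \oplus F_2 \oplus F_3 \oplus F_4$, then builds from the partition-of-unity functions $\chi_{X_j},\chi_{Y_j}$ (regarded as multiplication operators $\cE_k \to \cE_l$) an odd self-adjoint operator $X$ on $\cE$ with $X^2=1$. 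The analytic core is to show $XF+FX \in \cK(\cE)$: one writes $\chi^* F_l - F_k \chi^*$ via the integral formula for $F$ (Proposition~4.12 in \cite{Guo18}) and applies the Rellich lemma for Hilbert $C^*(G)$-modules (Theorem~\ref{thm Rellich}) together with the resolvent bounds of \cite{Guo18} to see each term is compact and the integral converges in norm. Then $(\cE,F)$ is a Kasparov $(\Cl,C^*(G))$-cycle for the Clifford algebra generated by $X$, and restriction along $\C\hookrightarrow\Cl$ kills the class by Lemma~1.15 of \cite{Bunke95}. No manifold is ever cut or reglued.

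Your Step~2 has a genuine gap. Proposition~\ref{prop htp invar} concerns a family $P+t\Psi$ on a \emph{fixed} Clifford module over a \emph{fixed} manifold, with fixed Sobolev modules $\cE^j$; it gives no mechanism for comparing indices across a change of underlying manifold. To pass from $\widehat M$ to $\widehat M'$ you would need an explicit unitary identification $\cE(\widehat M)\cong\cE(\widehat M')$ together with a norm-continuous path of bounded-transform operators joining the two $F$'s---and producing such a path is essentially the relative index theorem itself, not a corollary of Proposition~\ref{prop htp invar}. You flag this as ``the main obstacle'' but do not resolve it; the Clifford-symmetry method sidesteps it entirely by encoding the regluing algebraically in $X$ rather than geometrically.

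There is also a secondary problem in Step~1: you posit an odd self-adjoint endomorphism $\sigma$ of $S|_U$ with $\sigma^2>0$ and $[D,\sigma]$ of order zero. In this theorem $S$ is a general $\Z/2$-graded Clifford module, not assumed to be of the form $S_0\oplus S_0$ as in \eqref{eq D D0}--\eqref{eq Phi Phi0}, and such a $\sigma$ need not exist globally on $U$.
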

\begin{proof}
This proof is an adaptation of the proof of Theorem 1.14 in \cite{Bunke95}, with some results from \cite{Guo18} added. For $j=1,2,3,4$, let $\cE_j$ and $F_j$ be as $\cE$ and $F$ above and in Theorem \ref{thm Guo}, for the data indexed by $j$. Using superscripts $\op$ to denote opposite gradings, we write
$\cE \coloneqq  \cE_1 \oplus \cE_2 \oplus \cE_3^{\op} \oplus \cE_4^{\op}$ and $F \coloneqq  F_1 \oplus F_2 \oplus F_3 \oplus F_4$. We will show that
\beq{eq 1234}
[\cE, F] = 0 \quad \in \KK_0(\C, C^*(G)),
\eeq
which is equivalent to the theorem.

For $j=1,2$, let $\chi_{X_j}, \chi_{Y_j} \in C^{\infty}(M_j)$ be real-valued functions such that 
\begin{itemize}
\item $\supp(\chi_{X_j}) \subset X_j \cup U_j$ and $\supp(\chi_{Y_j}) \subset Y_j \cup U_j$;
\item $\psi^*(\chi_{X_2}|_{U_2}) = \chi_{X_1}|_{U_1}$ and $\psi^*(\chi_{Y_2}|_{U_2}) = \chi_{Y_1}|_{U_1}$; and
\item $\chi_{X_j}^2 + \chi_{Y_j}^2 = 1$.
\end{itemize}
We view pointwise multiplication by these functions as operators
\beq{eq chi XYj}
\begin{split}
\chi_{X_1}\colon & \cE_1 \to \cE_3;\\
\chi_{X_2}\colon & \cE_1 \to \cE_4;
%\chi_{Y_1}\colon & \cE_2 \to \cE_3;\\
%\chi_{Y_2}\colon & \cE_2 \to \cE_4.\\
\end{split}
\qquad
\begin{split}
%\chi_{X_1}\colon & \cE_1 \to \cE_3;\\
%\chi_{X_2}\colon & \cE_1 \to \cE_4;\\
\chi_{Y_1}\colon & \cE_2 \to \cE_3;\\
\chi_{Y_2}\colon & \cE_2 \to \cE_4.
\end{split}
\eeq
The adjoints of these operators map in the opposite directions, and are also given by pointwise multiplication by the respective functions.
Using these multiplication operators, and the grading operator $\gamma$,
we form the operator
\[
X \coloneqq  \gamma \begin{pmatrix}
0 & 0 & -\chi_{X_1}^* & \chi_{X_2}^* \\
0 & 0 & - \chi_{Y_1}^* & \chi_{Y_2}^* \\
\chi_{X_1} & \chi_{Y_1} & 0 & 0 \\
\chi_{X_2} & -\chi_{Y_2} & 0 & 0
\end{pmatrix}
\]
on $\cE$. Then $X$ is an odd, self-adjoint, adjointable operator on $\cE$ such that $X^2 = 1$. As such, it generates a Clifford algebra $\Cl$.

We claim that $XF + FX$ is a compact operator. This is based on the Rellich lemma for Hilbert $C^*(G)$-modules, Theorem \ref{thm Rellich}. Let $\chi$ be one of the functions $\chi_{X_j}$ or $\chi_{Y_j}$, viewed as an operator from $\cE_k$ to $\cE_l$ as in \eqref{eq chi XYj}. 
Let $f_j \in C^{\infty}(M)$ be as in Theorem \ref{thm Guo}, for the operator $D_j + \Phi_j$. For $j=1,2,3,4$ and $\lambda \in \R$, the operator $(D_j^2 + f_j^2 + \lambda^2)$ on $\cE_j$ is invertible by Lemma 4.6 in \cite{Guo18}, and we denote its inverse by $R_j(\lambda)$.

Then, as in the proof of Theorem 1.14 in \cite{Bunke95}, and using Proposition 4.12 in \cite{Guo18}, we find that
 the operator
\beq{eq chi F}
\chi^* \circ  F_l - F_k \circ  \chi^*\colon \cE_l \to \cE_k 
\eeq
equals
\begin{multline} \label{eq 1234 int}
\frac{2}{\pi} 
\int_{\R}
\Bigl(
-\grad(\chi)R_l(\lambda) + D_k^2 R_k(\lambda) \grad(\chi) R_l(\lambda) +
\Bigr.
\\
\Bigl.
 D_k R_k(\lambda) \grad(f_k) R_k(\lambda) \grad(\chi) R_l(\lambda) + D_k R_k(\lambda) \grad(\chi) D_l R_l(\lambda)
 \Bigr)\, d\lambda.
\end{multline}
Theorem \ref{thm Rellich}, together with Lemma 4.6(a)
in \cite{Guo18}, implies that for all cocompactly supported continuous functions $\varphi$ on $M_j$, the compositions $\varphi D_j^nR_j(\lambda)$, $D_j^n\varphi R_j(\lambda)$ and $D_j^nR_j(\lambda) \varphi$ are compact operators on $\cE_j$ if $n=0,1$, and  adjointable operators if $n=1$. So all the terms in the integrand in \eqref{eq 1234 int} are compact operators. By  Lemmas 4.6 and 4.8 in \cite{Guo18}, the norm of the integrand in \eqref{eq 1234 int} is bounded  by $a(b+\lambda^2)^{-1}$ for constants $a,b>0$. So
the integral converges in the operator norm on $\calL(\cE)$, and we conclude that \eqref{eq chi F} is a compact operator on $\cE$. This implies that $XF + FX$ is a compact operator. 

Because $X$ generates $\Cl$ and $X$ anticommutes with $F$ modulo compacts, the pair $(\cE, F)$ is a Kasparov $(\Cl, C^*(G))$-cycle. Its class in $\KK(\Cl, C^*(G))$ 
is mapped to the left-hand aside of \eqref{eq 1234} by  the pullback along the inclusion map $\C \hookrightarrow \Cl$.
 That map is zero by Lemma 1.15 in \cite{Bunke95}, so \eqref{eq 1234} follows.
\end{proof}

Theorem \ref{thm rel index} implies the following version of Proposition 5.9 in \cite{Cecchini16}. 
\begin{corollary} \label{cor 1234}
In the setting of Theorem \ref{thm rel index}, suppose that for $j= 1,2$, the set $X_j$ is cocompact, and contains a set $Z_j$ for $\Phi_j$ as in Definition \ref{def Phi}. Then
\[
\ind_G(D_1 + \Phi_1) = \ind_G(D_2+ \Phi_2).
\]
\end{corollary}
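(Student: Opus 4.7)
The plan is to apply Theorem~\ref{thm rel index} to the pair $(M_1,M_2)$, yielding
$$\ind_G(D_1+\Phi_1)+\ind_G(D_2+\Phi_2)=\ind_G(D_3+\Phi_3)+\ind_G(D_4+\Phi_4).$$
The hypothesis $Z_j\subset X_j$ ensures that on each mixed manifold $M_3=X_1\cup_N Y_2$ and $M_4=X_2\cup_N Y_1$, the Callias estimate for the spliced potential holds on the ``outside'' pieces $Y_2$ and $Y_1$ respectively (since $\Phi_j$ satisfies the estimate on $M_j\setminus Z_j\supset Y_j\setminus N$ by admissibility), with any failure confined to the cocompact cores $X_1\subset M_3$ and $X_2\subset M_4$. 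In particular $D_3+\Phi_3$ and $D_4+\Phi_4$ are genuine $G$-Callias-type operators, so the right-hand side is well-defined by Theorem~\ref{thm Guo}.

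The heart of the argument is to identify the two mixed indices with the original ones via cocompactly supported deformations. Since $M_3$ and $M_2$ carry the same tail data on $Y_2$, and the tubular-neighborhood identification $\psi\colon U_1\to U_2$ matches all geometric data near $N$, I would use Corollary~\ref{cor cocpt change} to deform $\Phi_3$ on the cocompact core $X_1\subset M_3$ through a family of admissible Callias-type potentials interpolating with the extension of $\Phi_2|_{U_2}$ across $X_2$; since the deformation is supported in a cocompact set, the index is unchanged, and one obtains $\ind_G(D_3+\Phi_3)=\ind_G(D_2+\Phi_2)$. A symmetric argument, swapping the roles of $(X_1,Y_1)$ and $(X_2,Y_2)$, gives $\ind_G(D_4+\Phi_4)=\ind_G(D_1+\Phi_1)$. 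Substitution into the relative index identity then yields the desired equality $\ind_G(D_1+\Phi_1)=\ind_G(D_2+\Phi_2)$.

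The main obstacle is making the identification of cocompact cores across different underlying manifolds rigorous. Because $X_1$ and $X_2$ need not be diffeomorphic and carry different Clifford modules, the ``matching'' of data cannot be realized by a diffeomorphism; rather, it must be implemented at the level of Kasparov cycles by a compact perturbation of the relevant operators in $\cL(\cE^0, \cE^1)$, using the Rellich lemma (Theorem~\ref{thm Rellich}) to show that the difference between the two glued resolvents is compact, and hence that the $\KK$-classes agree. This is where the cocompactness of $X_j$ is essential: it allows all deformations to be carried out within a controlled region where $\cK$-theoretic estimates of the type developed in Section~\ref{sec prop ind} apply. The argument closely parallels that of Proposition~5.9 in~\cite{Cecchini16}, which establishes the non-equivariant analogue.
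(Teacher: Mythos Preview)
Your proposal has a genuine logical gap. Suppose you succeed in establishing the two identifications you propose, namely $\ind_G(D_3+\Phi_3)=\ind_G(D_2+\Phi_2)$ and $\ind_G(D_4+\Phi_4)=\ind_G(D_1+\Phi_1)$. Substituting these into the relative index identity
\[
\ind_G(D_1+\Phi_1)+\ind_G(D_2+\Phi_2)=\ind_G(D_3+\Phi_3)+\ind_G(D_4+\Phi_4)
\]
yields only the tautology $\ind_1+\ind_2=\ind_2+\ind_1$, not the desired equality $\ind_1=\ind_2$. So Theorem~\ref{thm rel index} plays no actual role in your argument; all the content would have to lie in the identifications themselves. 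But each identification \emph{is} an instance of the corollary you are trying to prove: $M_3=X_1\cup_N Y_2$ and $M_2=X_2\cup_N Y_2$ are two manifolds with cocompact cores containing the bad sets, matching data near $N$, and (in this case) identical tails. Your proposed mechanism for proving $\ind_3=\ind_2$ via Corollary~\ref{cor cocpt change} cannot work, since that corollary only deforms the potential on a \emph{fixed} manifold, whereas $M_2$ and $M_3$ have non-diffeomorphic cores; and the ``compact perturbation of resolvents'' route fails because the operators $F_2$ and $F_3$ live on different Hilbert modules $\cE_{M_2}$ and $\cE_{M_3}$, with no natural identification between them.

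The paper's approach (following the proof of Proposition~5.9 in \cite{Cecchini16}) is structurally different: rather than trying to match $\ind_3$ or $\ind_4$ with one of the original indices, one applies Theorem~\ref{thm rel index} in a configuration where the two \emph{glued} manifolds have vanishing index. One glued piece is cocompact, so Lemma~\ref{lem cocpt} kills its index; the other is built from the pieces $Y_1$, $Y_2$ on which the Callias estimate \eqref{eq est Phi} already holds, so Lemma~\ref{lem invtble} kills its index. With the grading on one input reversed so that its index contributes with a minus sign, the relative index identity reads $\ind_G(D_1+\Phi_1)-\ind_G(D_2+\Phi_2)=0+0$, which is exactly the claim. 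The vanishing lemmas, not a cross-manifold deformation, are what close the argument.
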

\begin{proof}
This fact can be deduced from Theorem \ref{thm rel index} in exactly the same way Proposition 5.9 in \cite{Cecchini16} is deduced from Theorem 5.7 in \cite{Cecchini16}. Compared to that proof in \cite{Cecchini16}, references to Corollaries 3.4 and 4.9 and Theorem 5.7 in that paper should be replaced by references to  Lemmas \ref{lem invtble} and \ref{lem cocpt} and Theorem \ref{thm rel index}, respectively, in the present paper.
\end{proof}
The crucial assumption in Corollary \ref{cor 1234} is that all data near $N_1$ can be identified with the corresponding data near $N_2$.

\section{Proof of the $G$-Callias-type index theorem} \label{sec pf ind}

The first and most important step in the proof of Theorem \ref{thm index} is Proposition \ref{prop cylinder}, which states that $\ind_G(D + \Phi)$ equals the index of a $G$-Callias-type operator on the manifold $N \times \R$, which we will call the cylinder on $N$. See Figure \ref{fig cylinder}. Such an approach is taken in proofs of various other index theorems for Callias-type operators; see for example \cite{Anghel93, Braverman18, Bunke95, Cecchini16}. \begin{figure}
\begin{center}
\includegraphics[width=0.7\textwidth]{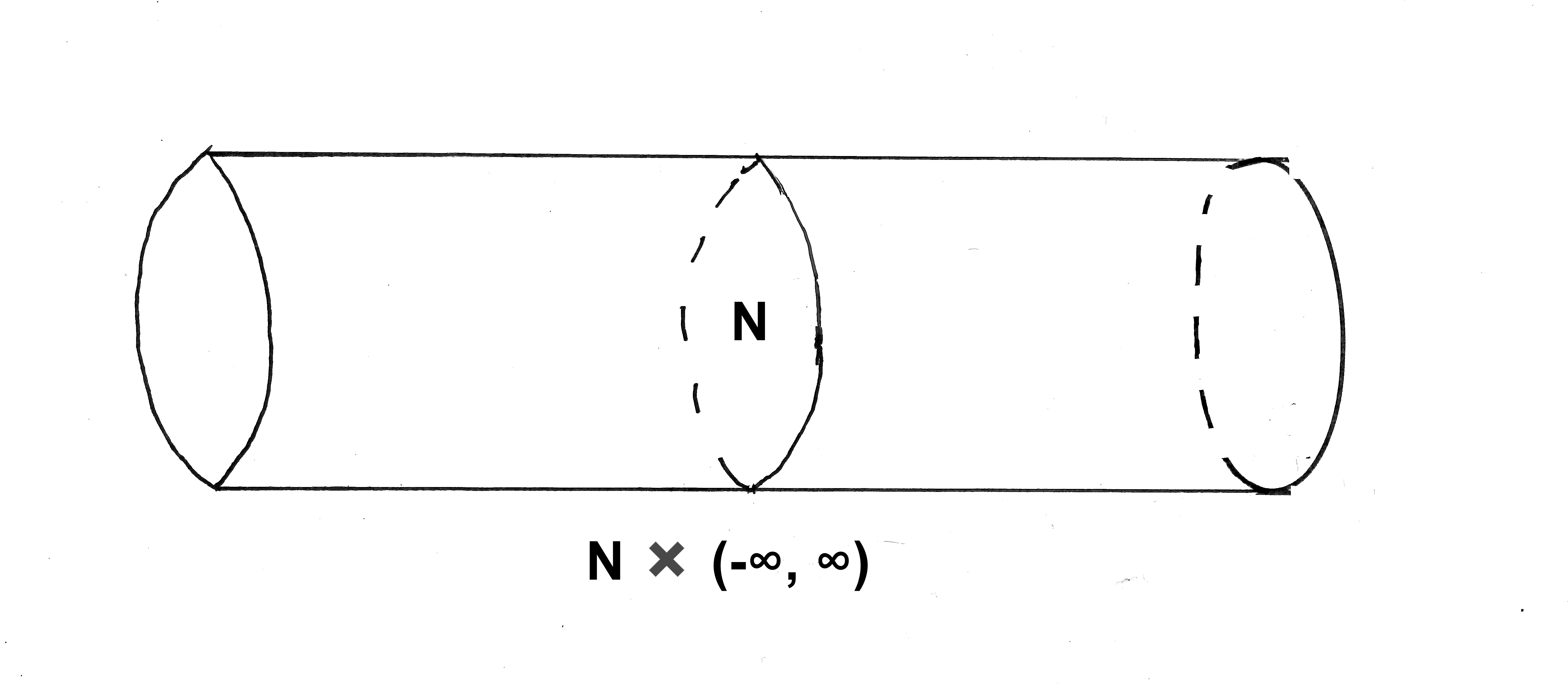}
\end{center}
\caption{The cylinder $N \times \R$}
\label{fig cylinder}
\end{figure}

In this section, we consider the setting of Subsection \ref{sec index thm}. In particular, $D$ and $\Phi$ are assumed to be of the forms \eqref{eq D D0} and \eqref{eq Phi Phi0}.

\subsection{An index on the cylinder}\label{sec ind cyl}

Let $S^{N \times \R}_{\pm} \to N \times \R$ be the pullbacks of the bundles of $S^{N}_{\pm} \to N$ defined in Subsection \ref{sec index thm} along the projection $N \times \R \to N$. They are Clifford modules, with Clifford actions
\[
\hat c(v , t) = c(v + t\hat n),
\]
for $v \in TN$ and $t \in \R$, where $c$ is the Clifford action by $TM$ on $S$ (which preserves $S^{N}_+$ as pointed out in Subsection \ref{sec index thm}), and $\hat n$ is the normal vector field to $N$ in the direction of $M_+$. Let $D^{S^{N\times R}_+}_0$ be the Dirac operator on $\Gamma^{\infty}(S^{N \times \R}_+)$ defined by this Clifford action, and the pullback to $N \times \R$ of the restriction to $N$ of the Clifford connection $\nabla^{S^N_+}$ used to define $D_0^{S^N_+}$.

Let $\chi \in C^{\infty}(\R)$ be an odd function such that $\chi(t) = t$ for all $t \geq 2$. We also denote its pullback to $N \times \R$ by $\chi$. Then pointwise multiplication by $\chi$ is an admissible endomorphism for $D^{S^{N\times \R}_+}_0$. 
Whenever a Dirac operator with a subscript $0$ is given, we will remove that subscript to denote the corresponding Dirac operator on two copies of the Clifford module in question, as in \eqref{eq D D0}. In the current setting, this gives us the Dirac operator
\[
D^{S^{N\times \R}_+} = 
\begin{pmatrix}
0 & D^{S^{N\times \R}_+}_0\\ D^{S^{N\times \R}_+}_0  & 0
\end{pmatrix}
\]
on $\Gamma^{\infty}(S^{N \times \R}_+  \oplus S^{N \times \R}_+ )$. We also consider the admissible endomorphism
\[
\chi^{N \times \R} = \begin{pmatrix}
0 & i\chi \\ -i \chi & 0
\end{pmatrix}
\]
of $S^{N \times \R}_+  \oplus S^{N \times \R}_+$.
\begin{proposition} \label{prop cylinder}
We have
\beq{eq cylinder}
\ind_G(D + \Phi) = \ind_G(D^{S^{N\times\R}_+} + \chi^{N \times \R}).
\eeq
\end{proposition}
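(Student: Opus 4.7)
The strategy is to apply the relative index theorem (Corollary \ref{cor 1234}) to identify $\ind_G(D+\Phi)$ on $M$ with the $G$-index of the cylinder Callias operator, after deforming the data on $M$ in a cocompact neighborhood of $N$ so that the two situations can be matched. Since the bundles on $M$ and $N\times\R$ have different ranks near $N$ (the former contains both $S^N_+$ and $S^N_-$ summands of $S_0|_N$, while the latter only involves $S^N_+$), the deformation must also separate out the $S^N_-$-contribution, which will be shown to have vanishing $G$-index.

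First, I would choose a $G$-invariant tubular neighborhood $U \cong N \times (-\epsilon, \epsilon)$ of $N$ in $M$, and extend the spectral decomposition $S_0|_N = S^N_+ \oplus S^N_-$ to $U$ via parallel transport along normal geodesics. Because $U$ is cocompact, Corollary \ref{cor cocpt change} allows modifying $\Phi$ inside $U$ without changing $\ind_G(D+\Phi)$, and Proposition \ref{prop htp invar} permits deforming $D$ and $\Phi$ through continuous families of Callias-type operators, with its adjointability hypotheses verified via Propositions \ref{prop E0} and \ref{prop:boundedhilbert}. Using these tools, I would arrange that on a smaller subcollar $U' \subset U$, both $D$ and $\Phi$ are of product type with respect to the normal coordinate $t$, preserve the decomposition $S_0|_{U'} = \pi^*S^N_+ \oplus \pi^*S^N_-$, and $\Phi_0$ acts as $\chi(t)\cdot \id$ on the $+$-summand (matching the cylinder potential) and as a large negative constant on the $-$-summand (making the associated Callias-type operator pointwise invertible on $U'$).

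Second, I would use this product structure to decompose $\ind_G(D+\Phi)$ into two contributions via the relative index theorem. For the $+$-contribution, the data on $U'$ agrees by construction with the cylinder data on a neighborhood of $N\times\{0\}$, so Corollary \ref{cor 1234} applied with $M_1 = M$, $M_2 = N\times\R$, and the cocompact sets $X_j$ chosen so that they contain the respective Callias sets, identifies this contribution with $\ind_G(D^{S^{N\times\R}_+} + \chi^{N\times\R})$. For the $-$-contribution, the potential is bounded away from zero on $U'$; by a further cocompactly supported modification (using Corollary \ref{cor cocpt change}) one makes the corresponding operator invertible everywhere on $M$, and its $G$-index is then zero by Lemma \ref{lem invtble}.

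The main technical obstacle I anticipate is executing the collar deformation rigorously. One must construct one-parameter families of admissible Callias potentials $\Phi_s$ interpolating between the original $\Phi$ and its product-form version, verify the estimate \eqref{eq est Phi} uniformly in $s$ along the homotopy, and check that at each stage the relevant endomorphisms define adjointable operators on the Sobolev modules $\cE^0$ and $\cE^1$ so Proposition \ref{prop htp invar} applies. The splitting of the resulting operator into $+$- and $-$-pieces is only defined on $U'$, so the splitting argument must be phrased in terms of auxiliary globally defined Callias operators on $M$ whose difference from the deformed $D+\Phi$ is cocompactly supported, and the relative index theorem must be applied carefully on these auxiliary operators to separate the two contributions without ambiguity.
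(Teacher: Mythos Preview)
Your overall strategy---deform the data cocompactly to product form near $N$, separate the $S^N_+$ and $S^N_-$ contributions, and compare with the cylinder via the relative index theorem---matches the paper's in spirit, but the execution you sketch has a genuine gap at the comparison step.

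You propose to apply Corollary~\ref{cor 1234} with $M_1=M$ and $M_2=N\times\R$ (carrying only $S^{N\times\R}_+$). This cannot work as stated, for two reasons. First, the hypothesis of Corollary~\ref{cor 1234} (and of Theorem~\ref{thm rel index} underlying it) requires the Clifford modules to be isomorphic near the hypersurface; on $M$ the bundle is $S_0\oplus S_0$, while on the $+$-cylinder it is $S^{N\times\R}_+\oplus S^{N\times\R}_+$, and these have different rank. Your suggestion to ``split'' the operator on $M$ into $+$ and $-$ pieces does not resolve this, because the eigenbundle decomposition $S_0=S^N_+\oplus S^N_-$ is only defined where $\Phi_0$ is invertible; it has no extension across $M_-$, so there is no globally defined ``$+$-part'' of $D+\Phi$ on $M$ to which Corollary~\ref{cor 1234} could be applied. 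Second, even if you compare $M$ with a cylinder carrying the full bundle $S_0^{N\times\R}$, Corollary~\ref{cor 1234} still requires a cocompact $X_2\subset N\times\R$ bounded by a single copy of $N$ and containing the Callias set of $\chi^{N\times\R}$; but any half-cylinder $N\times(-\infty,c]$ or $N\times[c,\infty)$ is noncocompact, so no such $X_2$ exists.

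The paper circumvents both obstacles by an intermediate construction. It first replaces $M_+$ by a half-cylinder to form $M_C=M_-\cup_N N\times[1,\infty)$; here Corollary~\ref{cor 1234} does apply, since both $M$ and $M_C$ share the cocompact core $M_-$ with the same bundle $S_0$. On $M_C$ the decomposition $S_+\oplus S_-$ is global on the cylindrical end, so one can legitimately deform $D^C$ and $\Phi^C$ to diagonal form there (Lemmas~\ref{lem DC tilde} and~\ref{lem Phi chi}). Then, rather than invoking Corollary~\ref{cor 1234} again, the paper builds a second manifold $M_C^-$ with vanishing index (Lemma~\ref{lem DC-}) and applies the full relative index Theorem~\ref{thm rel index} to the pair $(M_C,M_C^-)$, producing $N\times\R$ (with the full bundle $S_0^{N\times\R}$) together with the cocompact double $M_-\cup_N M_-^-$, whose index vanishes by Lemma~\ref{lem cocpt}. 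Only on $N\times\R$, where the splitting is global, does the $+$/$-$ separation finally take place. The auxiliary manifold $M_C^-$ and the use of Theorem~\ref{thm rel index} rather than its corollary are precisely the missing ingredients in your outline.
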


The proof of Proposition \ref{prop cylinder} that we give below is an analogue of the proof of Theorem 5.4 in \cite{Cecchini16}. 
We give this proof in Subsections \ref{sec half cylin} and \ref{sec pf cyl}, referring to \cite{Cecchini16} for details in some places, and using results from \cite{Guo18} and from Section \ref{sec prop ind}.

\subsection{Attaching a half-cylinder} \label{sec half cylin}

Let $S_0^{N \times \R} \to N \times \R$ be the pullback of $S_0|_N \to N$. We choose $U$ small enough so that
 $S_0|_{U} \cong S^{N\times \R}_0|_U$. 

Because the sets $X_j$ are cocompact in Corollary \ref{cor 1234}, we initially compare the left-hand side of \eqref{eq cylinder} to an index on a manifold where only $M_+$ is replaced by a half-cylinder $N \times [1,\infty)$. To be more precise, $\ind_G(D + \Phi)$ is invariant under changes in the Riemannian metric on cocompact sets because the  Kasparov $(\C, C^*(G))$-cycles corresponding to two $G$-invariant Riemannian metrics differing only a cocompact set are homotopic by convexity of the space of $G$-invariant Riemannian metrics.
We choose a metric such that there is a neighbourhood $U$ of $N$ that is isometric to $N \times (1/4, 7/4)$ (see Figure \ref{fig M}). 
% pullback of a Clifford connection on $S|_N$. 
\begin{figure}
\begin{center}
\includegraphics[width=0.7\textwidth]{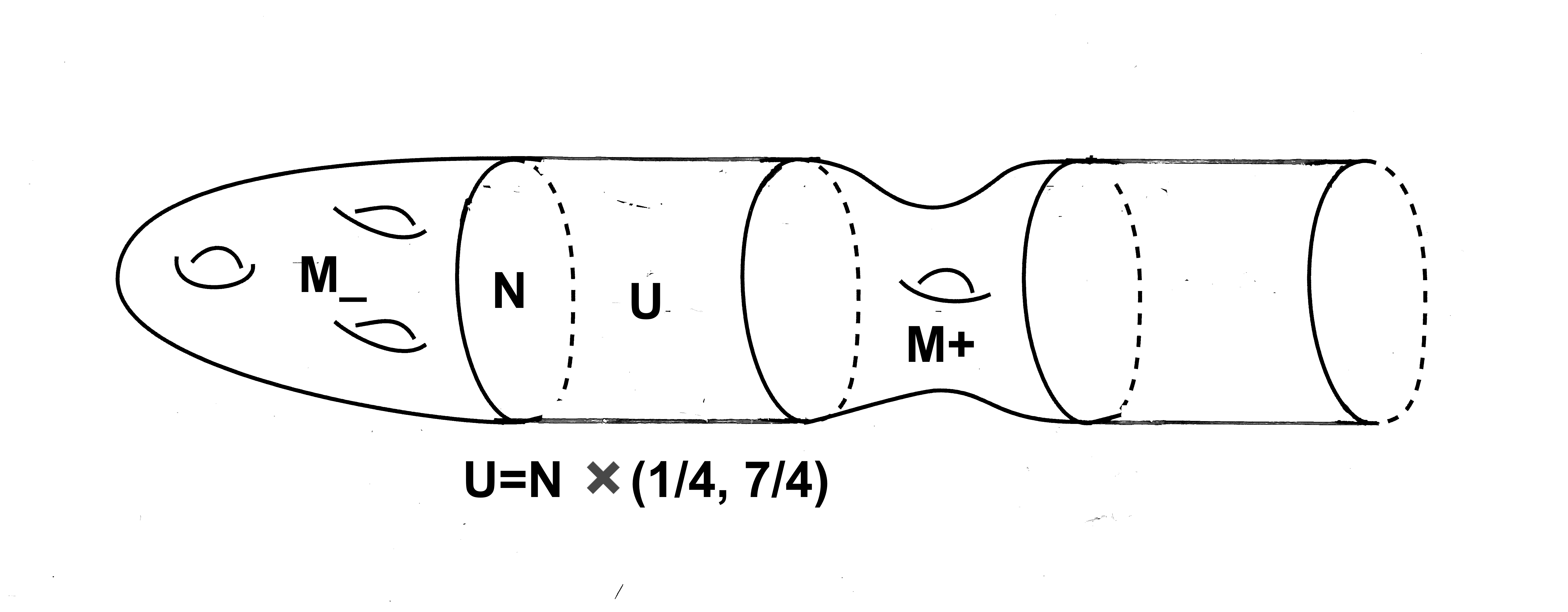}
\end{center}
\caption{The manifold $M$}
\label{fig M}
\end{figure}
 
 By Corollary \ref{cor cocpt change}, the index of $D+\Phi$ does not change if we change $\Phi_0$
  in a cocompact set. So we may assume that
 $\Phi_0$ is constant in the direction normal to $N$ inside $U$; i.e.\ for all $n \in N$ and $t \in (1/4, 7/4)$, $\Phi_0(n,t) = \Phi^N_0(n)$, for an endomorphism $\Phi^N_0$ of $S_0|_N$. We further choose $U$ such that a set $Z$ as in Definition \ref{def Phi} is contained in $M_- \setminus U$.
 
 Let $\nabla^{S_0^{N \times \R}}$ be the pullback of $\nabla^{S_0}|_{N}$ to a connection on $S_0^{N \times \R}$. We choose the Clifford connection $\nabla^{S_0}$ to define $D_0$ so that on $U$, it equals the restriction of $\nabla^{S_0^{N \times \R}}$  to $N \times (1/4, 7/4)$.

For this structure near $N$, we can form the Riemannian manifold $M_C \coloneqq  M_- \cup_N [1,\infty)$ (see Figure \ref{fig MC}), and define the Clifford module $S^C_0 \to M_C$ such that it equals $S_0$ on $M_-$ and $S^{N\times \R}_0$ on $N \times (1/4, \infty)$. Let $\nabla^{S_0^C}$ be the  Clifford connection on $S^C_0$ corresponding  to $\nabla^{S_0}$ on $M_-$ and to $\nabla^{S^{N\times \R}_0}$ on $N \times (1/4, \infty)$. Let $D^C_0$ be the resulting Dirac operator.
\begin{figure}
\begin{center}
\includegraphics[width=0.7\textwidth]{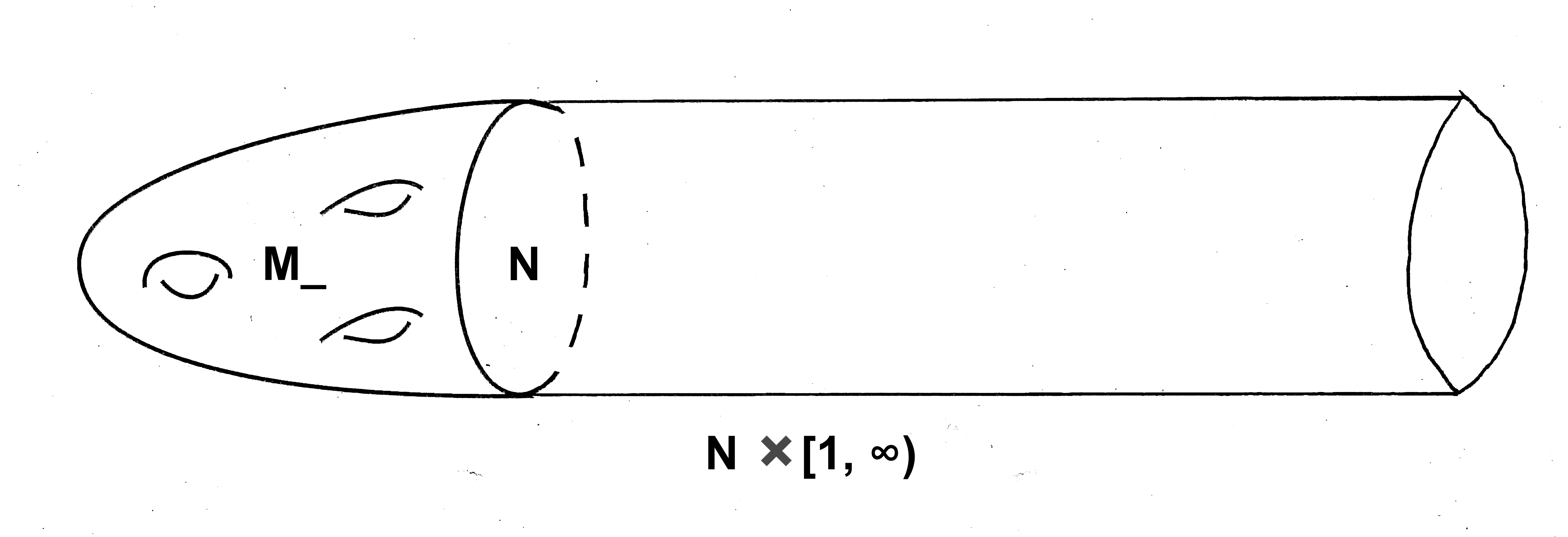}
\end{center}
\caption{The manifold $M_C$}
\label{fig MC}
\end{figure}

We define an endomorphism $\Phi^C_0$ of $S^C_0$ that is equal to $\Phi_0$ on $M_-$ and to the pullback of $\Phi^N_0$ on $N \times (1/4, \infty)$. 
Recall that by removing the subscript $0$ from $D^C_0$ we refer to the construction \eqref{eq D D0}. Similarly, when we remove the subscript $0$ from $\Phi^C_0$, we will be referring to the endomorphism $\Phi^C$ defined by $\Phi^C_0$ as in \eqref{eq Phi Phi0}.
Then Corollary \ref{cor 1234} immediately implies that
\beq{eq PhiC}
\ind_G(D + \Phi) = \ind_G(D^C + \Phi^C).
\eeq

The connection $\nabla^{S^{N\times \R}_0}$, and therefore the corresponding Dirac operator, does not preserve the decomposition $S^{N\times \R}_0 = S^{N\times \R}_+ \oplus  S^{N\times \R}_-$. With respect to this decomposition, that Dirac operator has the form
\beq{eq AB}
\begin{pmatrix}
D^{S^{N\times \R}_+}_0 & A \\ B & D^{S^{N\times \R}_-}_0,
\end{pmatrix}
\eeq
for vector bundle homomorphisms $A\colon S^{N\times \R}_- \to S^{N\times \R}_+$ and $B\colon  S^{N\times \R}_+ \to S^{N\times \R}_-$. (See Section 5.16 in \cite{Braverman18}, or use the fact that the difference of two connections is an endomorphism-valued one-form.) The Dirac operator $D^C_0$ equals this operator on $N \times (1/4, \infty)$. 
Let $\nabla^{S_{\pm}^{N \times \R}}$ be the pullback of $\nabla^{S_{\pm}^{N}}$ to a connection on $S_{\pm}^{N \times \R}$.
Consider a Clifford connection $\nabla^{S_0^C}$ on $S^C_0$ that is equal to the direct sum of $\nabla^{S_+^{N \times \R}}$ and $\nabla^{S_-^{N \times \R}}$ on $N \times (1/2, \infty)$ and to $\nabla^{S_0}$ on $M_- \setminus U$.
Then
the corresponding Dirac operator $\tilde D^C_0$
is equal to
\[
\begin{pmatrix}
D^{S^{N\times \R}_+}_0 & 0 \\ 0 & D^{S^{N\times \R}_-}_0
\end{pmatrix}
\]
on $N \times (1/2, \infty)$ and to $D_0$ on 
$M_- \setminus U$.
\begin{lemma} \label{lem DC tilde}
There exists a $\lambda \geq 1$ such that $\lambda \Phi^C$ is admissible for $\tilde D^C_0$. For such $\lambda$,
\beq{eq DC tilde}
\ind_G(D^C + \Phi^C) = \ind_G(\tilde D^C + \lambda \Phi^C).
\eeq
\end{lemma}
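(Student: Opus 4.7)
The plan is to reduce to Proposition \ref{prop htp invar} via the intermediate operator $D^C + \lambda \Phi^C$. First, for the admissibility claim, observe that the difference $\Xi := \tilde D^C - D^C$ is an odd, $G$-equivariant bundle endomorphism of $S^C$: the connections $\nabla^{S_0^C}$ and $\tilde\nabla^{S_0^C}$ differ by a bundle-valued one-form, and on the half-cylinder the corresponding difference of Dirac operators is the off-diagonal term appearing in \eqref{eq AB}. It is uniformly bounded, since on $M_- \setminus U$ it vanishes, and on $N \times [1,\infty)$ it is pulled back from the cocompact base $N$. The endomorphism $\Phi^C$ is also uniformly bounded, being smooth on the cocompact set $M_-$ and equal to the constant extension of the bounded $\Phi^N_0$ on $N \times [1,\infty)$. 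From
\[
\tilde D^C \Phi^C + \Phi^C \tilde D^C = (D^C \Phi^C + \Phi^C D^C) + (\Xi \Phi^C + \Phi^C \Xi),
\]
each summand on the right is a uniformly bounded endomorphism, while the inherited admissibility of $\Phi^C$ for $D^C$ yields $(\Phi^C)^2 \geq c_0 > 0$ outside a cocompact set. Hence for $\lambda$ sufficiently large, $\lambda^2 c_0$ dominates $\lambda\|\tilde D^C \Phi^C + \Phi^C \tilde D^C\|_\infty + C$, establishing admissibility of $\lambda \Phi^C$ for $\tilde D^C_0$.

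To obtain \eqref{eq DC tilde}, I would pass through $D^C + \lambda \Phi^C$ by concatenating two homotopies:
\[
P^{(1)}_t := (D^C + \Phi^C) + t(\lambda-1)\Phi^C, \qquad P^{(2)}_t := (D^C + \lambda \Phi^C) + t\,\Xi, \qquad t \in [0,1].
\]
The first connects $D^C + \Phi^C$ to $D^C + \lambda\Phi^C$, and the second connects $D^C + \lambda\Phi^C$ to $\tilde D^C + \lambda\Phi^C$; both are of the form $P + t\Psi$ required in Proposition \ref{prop htp invar}, so concatenation would give \eqref{eq DC tilde} once the hypotheses of that proposition are checked for each.

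The perturbations $(\lambda-1)\Phi^C$ and $\Xi$ are uniformly bounded, hence adjointable on $\cE^0$ by Proposition \ref{prop E0}. Adjointability on $\cE^1$ follows from Proposition \ref{prop:boundedhilbert} applied with the cocompact subset $M_- \subset M_C$: the cylindrical end $M_C \setminus M_- = N \times (1,\infty)$ has the required product structure, and the perturbations, as well as $D^C$ on this region, are constant in the $\R$ coordinate. For uniform invertibility of $(P^{(1)}_t)^2 + f$, setting $s_t := 1 + t(\lambda-1) \geq 1$, one checks that $s_t^2 (\Phi^C)^2 \geq s_t \|D^C \Phi^C + \Phi^C D^C\| + C$ outside the cocompact set witnessing admissibility of $\Phi^C$ for $D^C$, so Theorem 5.6 of \cite{Guo18} supplies one cocompactly supported $f$ that works uniformly in $t \in [0,1]$. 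For $(P^{(2)}_t)^2 + f$, outside $M_-$ one has $(P^{(2)}_t)^2 \geq \lambda^2 (\Phi^C)^2 - O(\lambda)$ with the implicit constant independent of $t$; enlarging $\lambda$ if necessary yields a uniform positive lower bound, while a cocompactly supported $f$ handles the behaviour on $M_-$. The main technical obstacle is this uniform invertibility in the second homotopy, but it reduces to the quadratic-versus-linear estimate in $\lambda$; once it is secured, Proposition \ref{prop htp invar} delivers the desired index equality.
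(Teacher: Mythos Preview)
Your proof is correct and follows essentially the same route as the paper's. Both arguments pass through the intermediate operator $D^C + \lambda\Phi^C$ via two linear homotopies (scaling $\Phi^C$ and deforming $D^C$ to $\tilde D^C$ by the off-diagonal endomorphism $\Xi$), verify hypothesis (1) of Proposition \ref{prop htp invar} using Propositions \ref{prop E0} and \ref{prop:boundedhilbert}, and obtain admissibility of $\lambda\Phi^C$ from the quadratic-versus-linear estimate in $\lambda$; you supply somewhat more detail on the invertibility hypothesis than the paper, which simply refers to \cite{Cecchini16}.
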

\begin{proof}
Existence of $\lambda$ with the desired property can be established as in the proof of Lemma 5.13 in \cite{Cecchini16}. The equality \eqref{eq DC tilde} can be proved via a linear homotopy; again see the proof of Lemma 5.13 in \cite{Cecchini16} for details, where references to Proposition 4.1 in \cite{Cecchini16} should be replaced by references to Proposition \ref{prop htp invar} in the current paper, and one uses Propositions \ref{prop E0} and \ref{prop:boundedhilbert} to check that the first condition in Proposition \ref{prop htp invar} is satisfied.

To be explicit, an application of Proposition \ref{prop htp invar} shows that 
\[
\ind_G(D^C + \lambda \Phi^C) =
\ind_G(\tilde D^C + \lambda \Phi^C).
\]
This follows from Proposition \ref{prop htp invar} by setting $P = D^C + \lambda \Phi^C$, $\Psi = \begin{pmatrix}
0 & A \\ B & 0,
\end{pmatrix}$, with $A$ and $B$ as in \eqref{eq AB}, $t_0 = -1$ and $t_1 = 0$. Note that although $\Psi$ is not a Callias-type potential here, Proposition \ref{prop htp invar} still applies, since $\Psi$ defines an element of $\mathcal{L}(\mathcal{E}^0)$ by Proposition \ref{prop E0} and an element of $\mathcal{L}(\mathcal{E}^1)$ by Proposition \ref{prop:boundedhilbert}. (The conditions of Proposition \ref{prop:boundedhilbert} hold because of the forms of $D^C$, $\Phi^C$ and $\Psi$.)
A more straightforward application of Proposition \ref{prop htp invar} yields $\ind_G(D^C +\Phi^C) = \ind_G(D^C + \lambda \Phi^C)$.
\end{proof}

Let $\chi \in C^{\infty}(\R)$ be an odd function such that 
\[
\chi(t) = 
\left\{
\begin{array}{ll}
 0 & \text{if $ -1/4 \leq t \leq1/4$};\\ 
1 & \text{if $3/4 \leq t \leq 3/2$};\\
t & \text{if $t\geq 2$}.
\end{array}
\right.
\]
(The property that $\chi$ is unbounded in a way that makes it proper is only used in the proof of Lemma \ref{lem diff cpt}; see Lemma \ref{lem ebert}.)
Such functions form a subset of the set of functions $\chi$ in Subsection \ref{sec ind cyl}, but Proposition \ref{prop cylinder} for general $\chi$ follows from the case for this class of functions, because the index on the right-hand side of \eqref{eq cylinder} does not change if we modify $\chi$ in a cocompact set. (And at any rate, to prove Theorem \ref{thm index}, we only need Proposition \ref{prop cylinder} to hold for one such function $\chi$.)

Let $\gamma^N$ be the grading operator on $S_0|_N$ that equals $\pm 1$ on $S^N_{\pm}$. Let $\gamma^{N\times \R}$ be its pullback to $S^{N \times \R}_0$. 
Let $\Phi_0^{\chi}$ be the endomorphism of $S_0^C$ equal to $\chi \gamma^{N \times \R}$ on $N \times (1/4, \infty)$ and equal to zero on the rest of $M_C$. 
\begin{lemma} \label{lem Phi chi}
We have
\[
\ind_G(\tilde D^C + \lambda \Phi^C) = \ind_G(\tilde D^C + \Phi^{\chi}).
\]
\end{lemma}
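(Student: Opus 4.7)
The plan is to combine the cocompact-modification invariance of the index (Corollary \ref{cor cocpt change}) with the relative index theorem (Corollary \ref{cor 1234}), rather than attempt a direct linear homotopy. A naive application of Proposition \ref{prop htp invar} runs into the obstacle that $\Phi^\chi - \lambda\Phi^C$ is unbounded on the cylindrical end, since $\chi(t)=t$ for $t\ge 2$; neither Proposition \ref{prop E0} nor Proposition \ref{prop:boundedhilbert} then supplies the adjointability on $\cE^0$ and $\cE^1$ required by that homotopy invariance result.

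First I will construct an auxiliary potential $\tilde\Phi^\chi$ that interpolates between $\lambda\Phi^C$ near the seam $N\times\{1\}$ and $\Phi^\chi$ far out on the cylinder. Fix $a>1$ and $R>a+1$, and let $\rho\in C^\infty([1,\infty))$ be a nondecreasing function with $\rho(t)=0$ for $t\le a+\tfrac{1}{2}$ and $\rho(t)=1$ for $t\ge R$. Define
\[
\tilde\Phi^\chi_0 :=
\begin{cases}
\lambda\Phi^C_0 & \text{on } M_-,\\
(1-\rho(t))\lambda\Phi^C_0 + \rho(t)\,\chi(t)\,\gamma^{N\times\R} & \text{on } N\times[1,\infty),
\end{cases}
\]
and let $\tilde\Phi^\chi$ be the associated off-diagonal endomorphism built from $\tilde\Phi^\chi_0$ as in \eqref{eq Phi Phi0}. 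The decisive observation is that on the cylindrical end both $\lambda\Phi^C_0$ (the $t$-constant pullback of $\lambda\Phi^N_0$) and $\chi\gamma^{N\times\R}$ are diagonal in the splitting $S^{N\times\R}_+\oplus S^{N\times\R}_-$: strictly positive on $S^N_+$ with a uniform lower bound $\min(\lambda m,1)>0$ (where $m>0$ bounds $\Phi^N_0|_{S^N_+}$ below on the cocompact $N$, and $\chi\ge 1$ on $[1,\infty)$), and symmetrically strictly negative on $S^N_-$. Every convex combination therefore inherits this matched sign structure and the same uniform lower bound on $|\tilde\Phi^\chi_0|$. Since the commutator $[\tilde D^C,\tilde\Phi^\chi]$ is uniformly bounded on the cylinder ($\Phi^N_0$ is $t$-constant and $|\chi'|$ is bounded), $\tilde\Phi^\chi$ is admissible for $\tilde D^C$, with an admissibility set that can be placed inside $M_-\cup N\times[1,R]$.

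Because $\tilde\Phi^\chi$ and $\Phi^\chi$ differ only on the cocompact set $M_-\cup N\times[1,R]$, Corollary \ref{cor cocpt change} yields $\ind_G(\tilde D^C+\Phi^\chi)=\ind_G(\tilde D^C+\tilde\Phi^\chi)$. To compare $\tilde D^C+\lambda\Phi^C$ with $\tilde D^C+\tilde\Phi^\chi$, I apply Corollary \ref{cor 1234} with $M_1=M_2=M_C$, $D_j=\tilde D^C$, $\Phi_1=\lambda\Phi^C$, $\Phi_2=\tilde\Phi^\chi$, hypersurface $N_1=N_2=N\times\{a\}$, tubular neighborhood $U_j=N\times(a-\delta,a+\delta)$ for some $\delta<\tfrac{1}{2}$, and $X_j=M_-\cup N\times[1,a]$. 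On $U_j$ one has $\rho\equiv 0$, hence $\tilde\Phi^\chi=\lambda\Phi^C$ and all structures are identified by the identity; the sets $X_j$ are cocompact and contain admissibility sets $Z_j$ for the two potentials. Corollary \ref{cor 1234} then delivers the required equality, which combined with the previous identity proves the lemma.

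The principal technical point is the admissibility check for $\tilde\Phi^\chi$ during interpolation, which rests squarely on the matched sign structure of $\lambda\Phi^C_0$ and $\chi\gamma^{N\times\R}$ on the eigenbundles $S^N_\pm$ of $\Phi^N_0$; this matching guarantees that no destructive cancellation occurs in the convex combinations, and is the analog of the admissibility argument in the proof of the corresponding statement in \cite{Cecchini16}.
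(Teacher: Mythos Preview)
Your approach is genuinely different from the paper's: the paper follows Cecchini and combines a linear homotopy (Proposition~\ref{prop htp invar}, together with Propositions~\ref{prop E0} and~\ref{prop:boundedhilbert}) with Corollary~\ref{cor 1234}, whereas you avoid the homotopy step entirely and rely only on Corollary~\ref{cor cocpt change} and Corollary~\ref{cor 1234}. Your observation that a direct linear homotopy fails because $\Phi^\chi-\lambda\Phi^C$ is unbounded is correct, and this is why the paper's argument also needs Corollary~\ref{cor 1234} (in Cecchini's proof one first caps $\chi$ to get a bounded potential, then runs the homotopy).

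However, your application of Corollary~\ref{cor 1234} has a gap. You take the hypersurface at $N\times\{a\}$ and set $X_2=M_-\cup N\times[1,a]$, asserting that $X_2$ contains an admissibility set $Z_2$ for $\tilde\Phi^\chi$. But your own admissibility argument only places $Z_2$ inside $M_-\cup N\times[1,R]$ with $R>a+1$. To shrink $Z_2$ into $X_2$ you would need the Callias estimate $(\tilde\Phi^\chi_0)^2\ge \|[\tilde D^C_0,\tilde\Phi^\chi_0]\|+C$ to hold on the interpolation region $N\times[a+\tfrac12,R]$. This is \emph{not} automatic: the commutator picks up a term $\rho'(t)\chi(t)\,c(\partial_t)\gamma^{N\times\R}$ of size roughly $\|\rho'\|_\infty\cdot R$, together with a term $(1-\rho)\lambda[\tilde D^C_0,\Phi^N_0]$ of size up to $\lambda B_0$, while the lower bound you obtain for $(\tilde\Phi^\chi_0)^2$ on that region is only $\min(\lambda m,1)^2$, which is fixed and need not dominate $\lambda B_0+O(1)$.

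The gap is repairable, and your overall strategy is sound. One clean fix: first note that $\ind_G(\tilde D^C+\lambda\Phi^C)$ is independent of the admissible $\lambda$ (this follows from Proposition~\ref{prop htp invar} applied to the bounded perturbation $\Psi=\Phi^C$, or simply from Lemma~\ref{lem DC tilde}), so one may enlarge $\lambda$ at will. Then choose $a>\max(2,\lambda\|\Phi^N_0\|_\infty)$ and let $R\to\infty$ with $\|\rho'\|_\infty\sim (R-a)^{-1}$. On the interpolation region one then has $(\tilde\Phi^\chi_0)^2\ge\lambda^2 m_0^2$ (with $m_0$ the spectral gap of $\Phi^N_0$), while the commutator is bounded by $\lambda B_0+O(1)$ uniformly in $R$. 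For $\lambda$ large enough the estimate holds on all of $N\times[a,\infty)$, so $Z_2\subset X_2$ and Corollary~\ref{cor 1234} applies as you intended.
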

\begin{proof}
This can be proved via a linear homotopy between $\lambda \Phi^C$ and $\Phi^{\chi}$. The details are precisely as in the proof of Lemma 5.15 in \cite{Cecchini16}, with references to Propositions 4.1 and 5.9 in \cite{Cecchini16} replaced by references to Proposition \ref{prop htp invar} (combined with Propositions \ref{prop E0} and \ref{prop:boundedhilbert}) and Corollary \ref{cor 1234}, respectively, in the present paper. 
%Our function $\chi$ is different from the function $\zeta$ used in \cite{Cecchini16}, but that actually makes the proof slightly more direct in our setting.
\end{proof}

\subsection{Proof of Proposition \ref{prop cylinder}} \label{sec pf cyl}

Let $M_-^-$ be the manifold $M_-$ with reversed orientation.
Form the manifold
\[
M_C^- \coloneqq  (N \times (-\infty, 1]) \cup_N M_-^-.
\]
See Figure \ref{fig MCminus}.
(The notation  is motivated by the fact that $M_C$ with reversed orientation is naturally equal to $(N \times (-\infty, -1]) \cup_N M_-^-$, which can be identified with $M_C^-$ via a shift over a distance $2$.)
\begin{figure}
\begin{center}
\includegraphics[width=0.7\textwidth]{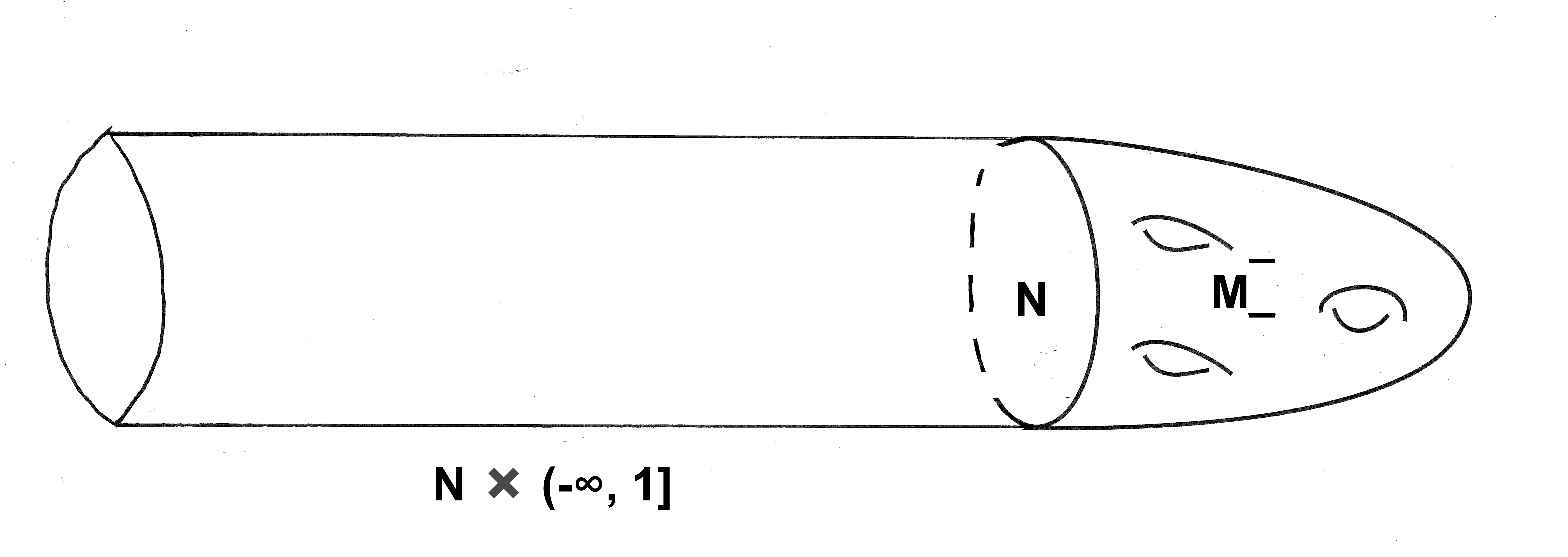}
\end{center}
\caption{The manifold $M_C^-$}
\label{fig MCminus}
\end{figure}

Let $S_0^- \to M^-_-$ be equal to the vector bundle $S_0|_{M_-}$, but with the opposite Clifford action (where $v \in TM^-_-$ acts as $c(-v)$).
Let $S_0^{C,-} \to M_C^-$ be the Clifford module that is equal to $S^{N \times \R}_0$ on $N \times (-\infty, 5/4]$ and to $S_0^-$ on $M^-_-$. From the Clifford connections $\nabla^{S^{N \times \R}_{\pm}}$ on $S^{N \times \R}_{\pm}$ and a Clifford connection $\nabla^{S_0^-}$  on $S_0^-$, construct a Clifford connection $\nabla^{S_0^{C, -}}$ on $S_0^{C,-}$ by 
\[
\nabla^{S_0^{C, -}} \coloneqq  \left\{ 
\begin{array}{ll}
\nabla^{S^{N \times \R}_+} \oplus \nabla^{S^{N \times \R}_-} & \text{on $N \times (-\infty, 5/4)$};
\\
\nabla^{S_0^-} & \text{on $M_-^-$}. 
\end{array}
\right.
\]
Using this connection, we obtain the Dirac operator $D^{C, -}_0$ on $\Gamma^{\infty}(S_0^{C,-})$. 
%
%(We use the diagonal connection on $S^{N \times \R}_0 = S^{N \times \R}_+ \oplus S^{N \times \R}_-$, so that
%
Then  $D^{C, -}_0$ equals $\tilde D^C_0$ on $N \times (1/2, 5/4)$.

%Let $\varphi\colon \R \to \R$ be a smooth function such that $\varphi(t) = 1$ for all $t<11/8$ and $\varphi(t) = 0$ for all $t > 13/8$. If we denote the pullback of $\varphi$ to $N \times \R$ be the same letter, then $\varphi \varphi^{\chi}_0$ is a well-defined endomorphism of $S_0^{C, -}$. (\Todo: comment that Cecchini may have forgotten $\varphi$\ldots)

The function $\chi$ equals $1$ on $(3/4, 5/4)$. 
Thus on this interval, both $\Phi_0^{\chi}$ and $\begin{pmatrix} \chi & 0 \\ 0 & -1\end{pmatrix}$ are equal to $\gamma^{N \times \R}$.
%
% of $S_0^{N \times \R}$ equals $\varphi \chi \gamma^{N\times \R}$, which in that interval equals
%\[
%\gamma^{N\times \R} = \begin{pmatrix} 1& 0 \\ 0 & -1\end{pmatrix} =
%\begin{pmatrix} \chi & 0 \\ 0 & -1\end{pmatrix}. 
%\]
(Here we use $2\times 2$ matrix notation with respect to the decomposition $S_0^{N \times \R} = S_+^{N \times \R} \oplus S_-^{N \times \R}$.)
So we can define the endomorphism $\Phi^{C, -}_0$ of $S_0^{C, -}$ by setting it equal to  $ \Phi^{\chi}_0$ on $(N \times (3/4, 1]) \cup_N M_-^-$ and equal to
\beq{eq matrix chi -1}
\begin{pmatrix}
\chi & 0 \\ 0 & -1
\end{pmatrix}
\eeq
on $N \times (-\infty, 5/4)$, where $S_0^{C, -} = S_0^{N \times \R} = S_+^{N \times \R} \oplus S_-^{N \times \R}$. 
%(So, in this range, $\Phi^{C, -}_0$ acts on $S_+^{N \times \R}$ as $\chi$ and on $S_-^{N \times \R}$ as $-1$.)

\begin{lemma} \label{lem DC-}
We have
\[
\ind_G(D^{C,-} + \Phi^{C, -}) = 0.
\]
\end{lemma}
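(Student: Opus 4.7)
The strategy is to modify $\Phi^{C,-}$ by a cocompactly supported, odd, $G$-equivariant vector bundle endomorphism $\Psi$ to obtain a Callias-type potential $\tilde\Phi \coloneqq \Phi^{C,-} + \Psi$ for which \eqref{eq est Phi} holds on all of $M_C^-$. Once such a $\Psi$ is constructed, Corollary \ref{cor cocpt change} gives $\ind_G(D^{C,-} + \Phi^{C,-}) = \ind_G(D^{C,-} + \tilde\Phi)$, while Lemma \ref{lem invtble} then implies that both sides vanish.

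For the construction, observe that $\Phi^{C,-}_0$ vanishes on the interior of $M_-^-$ and has smallest-modulus eigenvalue $\min(|\chi|, 1)$ on the cylinder end, so its non-invertibility locus is contained in a cocompact subset $\mathcal{K} \subset M_C^-$. Choose a smooth, $G$-invariant cutoff $\psi \colon M_C^- \to [0, 1]$ of cocompact support with $\psi \equiv 1$ on a neighbourhood of $\mathcal{K}$ and $\psi \equiv 0$ on the cylinder end far from $\mathcal{K}$. For a large constant $K > 0$, define
\[
\tilde\Phi_0 \coloneqq \Phi^{C,-}_0 - K\psi \cdot I_{S^{C,-}_0},
\]
and let $\tilde\Phi$ be the odd endomorphism on $S^{C,-}$ built from $\tilde\Phi_0$ via \eqref{eq Phi Phi0}, so that $\Psi \coloneqq \tilde\Phi - \Phi^{C,-}$ is automatically cocompactly supported, odd, and $G$-equivariant.

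The main technical step is verifying that $\tilde\Phi$ satisfies the global Callias-type estimate. For $K$ larger than the supremum of $\|\Phi^{C,-}_0\|$ on $\supp(\psi)$, the pointwise eigenvalues $\mu - K\psi$ of $\tilde\Phi_0$ are uniformly bounded away from zero on $\supp(\psi)$, so $\tilde\Phi^2$ is uniformly positive there; off $\supp(\psi)$ we have $\tilde\Phi = \Phi^{C,-}$, which is admissible by hypothesis. The anticommutator $[D^{C,-}_0, \tilde\Phi_0]$ differs from $[D^{C,-}_0, \Phi^{C,-}_0]$ by a term whose pointwise operator norm is bounded by $K\|d\psi\|_\infty$; by spreading the transition of $\psi$ over a sufficiently long portion of the cylinder end, $\|d\psi\|_\infty$ can be made arbitrarily small for fixed $K$. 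Combining these estimates yields the uniform bound $\tilde\Phi^2 \geq \|D^{C,-}\tilde\Phi + \tilde\Phi D^{C,-}\| + C_0$ on all of $M_C^-$ for some $C_0 > 0$, completing the proof. The delicate balance between $K$ and $\|d\psi\|_\infty$ in the transition region is the main obstacle to be overcome.
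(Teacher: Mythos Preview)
Your overall strategy---modify $\Phi^{C,-}$ cocompactly so that the estimate \eqref{eq est Phi} holds globally, then invoke Lemma~\ref{lem invtble} and Corollary~\ref{cor cocpt change}---is exactly the paper's. The difference is in how the modified potential is built, and your construction has a genuine gap.

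The eigenvalue claim is not correct as stated: the condition $K > \sup_{\supp(\psi)}\|\Phi^{C,-}_0\|$ does \emph{not} force $\mu - K\psi$ away from zero on all of $\supp(\psi)$, since at any transition point where $\psi$ happens to equal $\mu/K$ for a positive eigenvalue $\mu$ one gets $\mu - K\psi = 0$. More seriously, your stated condition on $K$ creates a circular dependence with the spreading of $\psi$: as the transition is pushed further into the cylinder to shrink $\|d\psi\|_\infty$, the support of $\psi$ reaches points where $|\chi(t)| = |t|$ is large, so $\sup_{\supp(\psi)}\|\Phi^{C,-}_0\|$ grows proportionally. Then $K$ must grow at the same rate, and $K\|d\psi\|_\infty$ stays bounded away from zero---the ``delicate balance'' you flag cannot actually be achieved under the hypotheses you wrote down. (The repair is to observe that on the cylinder for $t < -1/4$ every eigenvalue of $\Phi^{C,-}_0$, namely $\chi(t)$ and $-1$, is already negative, so $\mu - K\psi \le \mu < 0$ there for any $K>0$; then $K$ need only dominate $\|\Phi^{C,-}_0\|$ on the fixed set $\{\psi=1\}$, and the circularity disappears.)

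The paper sidesteps all of this by exploiting the explicit form of $\Phi^{C,-}_0$. Since $\chi \equiv -1$ on $(-3/2,-3/4)$, the endomorphism $\Phi^{C,-}_0$ is already equal to $-I$ on $N\times(-3/2,-3/4)$. One therefore defines $\tilde\Phi^{C,-}_0$ to be $-I$ on the cocompact piece $(N\times(-1,1])\cup_N M_-^-$ and to agree with $\Phi^{C,-}_0$ on $N\times(-\infty,-3/4)$; the two prescriptions match on the overlap, so no cutoff function and no balancing argument is needed at all. On the cocompact piece $\tilde\Phi^{C,-}_0=-I$ gives $(\tilde\Phi^{C,-})^2=1$ with vanishing commutator; on the cylinder end $\tilde\Phi^{C,-}=\Phi^{C,-}$, where the Callias estimate already holds. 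Lemma~\ref{lem invtble} then yields $\ind_G(D^{C,-}+\tilde\Phi^{C,-})=0$, and Corollary~\ref{cor cocpt change} finishes.
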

\begin{proof}
Because $\chi = -1$ on $(-\infty, -3/4)$, we can define the endomorphism $\tilde \Phi^{C, -}_0$ of $S_0^{C, -}$ by setting it equal to $\Phi^{C, -}_0$ on $N \times (-\infty, -3/4)$ (where it equals \eqref{eq matrix chi -1}) and equal to $-1$ on $(N \times (-1, 1]) \cup_N M_-^-$. For that endomorphism, the estimate \eqref{eq est Phi} holds on all of $M$. Therefore by Lemma \ref{lem invtble},
\[
\ind_G(D^{C,-} + \tilde \Phi^{C, -}) = 0.
\]
The claim now follows from Corollary \ref{cor cocpt change}.
%
%Proposition  \ref{prop htp invar}, via a linear homotopy between $\Phi^{C,-}$ and $\tilde \Phi^{C,-}$.
\end{proof}

\begin{proof}[Proof of Proposition \ref{prop cylinder}.]
Consider the cylinder $N \times \R$ as in Figure \ref{fig cylinder}. The data $(M_C, S^C_0, \Phi^{\chi}_0)$ and $(M_C^-, S_0^{C, -}, \Phi_0^{C,-})$ coincide in a neighbourhood of $N \times \{1\}$. 
By cutting along $N \times \{1\}$ and gluing, we obtain the corresponding data $(N \times \R, S_0^{N \times \R}, \Phi^{N \times \R}_0)$ 
and 
$(M_- \cup_N M_-^-, S_0^{M_- \cup_N M_-^-}, \Phi_0^{M_- \cup_N M_-^-})$.
See Figure \ref{fig M minus minus}.
To be explicit, 
\beq{eq Phi 0 N R}
\Phi_0^{N \times \R} = 
\begin{pmatrix}
\chi & 0 \\ 0 & -1
\end{pmatrix}
\eeq
on 
 $S^{N \times \R}_+ \oplus S^{N \times \R}_-$.

\begin{figure}
\begin{center}
\includegraphics[width=0.7\textwidth]{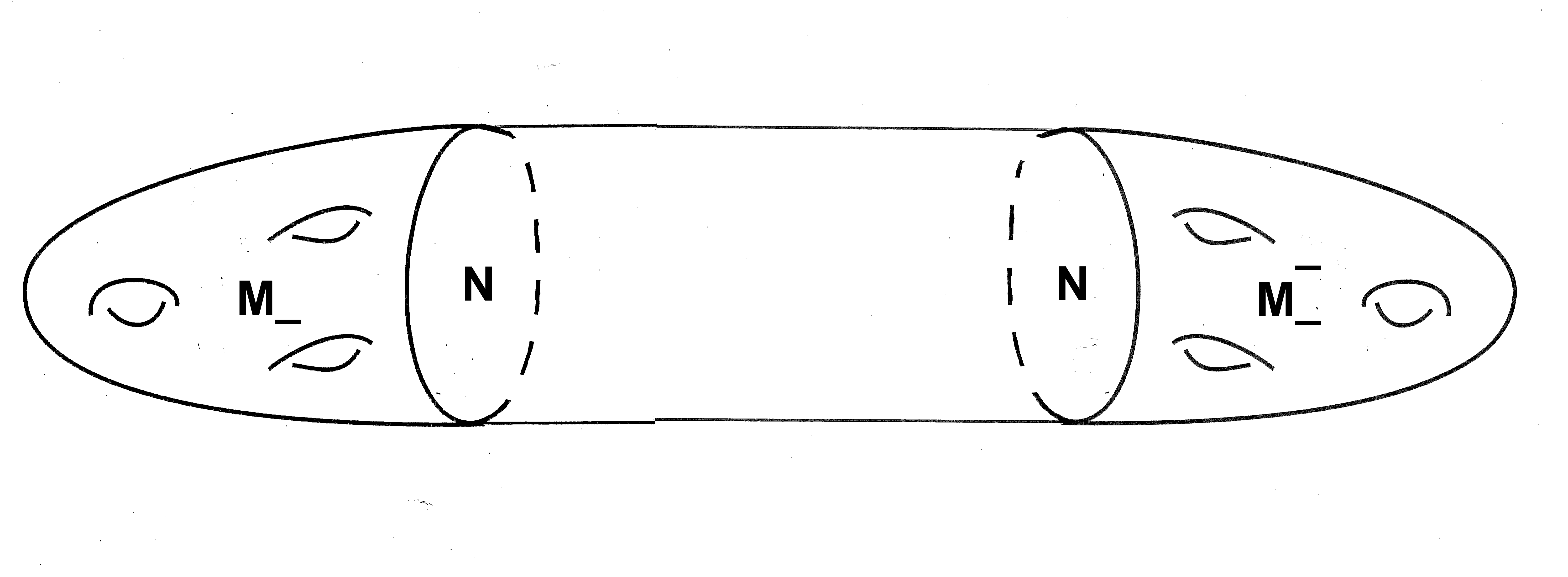}
\end{center}
\caption{The manifold $M_- \cup_N M_-^-$}
\label{fig M minus minus}
\end{figure}

Theorem \ref{thm rel index} implies that
\begin{multline*}
\ind_G(\tilde D^C + \Phi^{\chi}) + \ind_G(D^{C, -} + \Phi^{C, -}) \\
= \ind_G(D^{N \times \R} + \Phi^{N \times \R}) + \ind_G(D^{M_- \cup_N M_-^-} + \Phi^{M_- \cup_N M_-^-}).
\end{multline*}
By Lemmas \ref{lem cocpt} and \ref{lem DC-}, this implies that
\[
\ind_G(\tilde D^C + \Phi^{\chi}) 
= \ind_G(D^{N \times \R} + \Phi^{N \times \R}). 
\]

The connection $\tilde \nabla^{S_0^{N \times \R}}$ on $S_0^{N \times \R}$ obtained from cutting and gluing the connections $\nabla^{S_0^C}$ and $\nabla^{S_0^{C, -}}$ is the direct sum connection $\nabla^{S_+^{N \times \R}} \oplus \nabla^{S_+^{N \times \R}}$. So the corresponding Dirac operator $D_0^{S_0^{N \times \R}}$ equals
\[
D_0^{S_0^{N \times \R}} =
\begin{pmatrix}
D_0^{S_+^{N \times \R}} & 0 \\
0 & D_0^{S_+^{N \times \R}}
\end{pmatrix}.
\]
By the explicit form \eqref{eq Phi 0 N R} of $\Phi^{N \times \R}_0$, the operator $D_0^{S_0^{N \times \R}} \pm i \Phi^{N \times \R}_0$ on $\Gamma^{\infty}(S^{N \times \R}_0)$ is the direct sum of the operators $D_0^{S_+^{N \times \R}} \pm i \chi$ on  $\Gamma^{\infty}(S^{N \times \R}_+)$  and 
$D_0^{S_-^{N \times \R}} \mp i $ on  $\Gamma^{\infty}(S^{N \times \R}_-)$.
So
\[
\ind_G(D^{N \times \R} + \Phi^{N \times \R}) = \ind_G(D^{S^{N\times \R}_+} + \chi^{N \times \R} )+ 
\ind_G\left(D^{S^{N\times \R}_-} + \begin{pmatrix} 0 & -i \\ i & 0\end{pmatrix} \right).
\]
Lemma \ref{lem invtble} then implies that the second term on the right-hand side is zero, whence
\[
\ind_G(\tilde D^C + \Phi^{\chi}) = \ind_G(D^{S^{N\times \R}_+} + \chi^{N \times \R} ).
\]

The claim now follows from \eqref{eq PhiC} in conjunction with Lemmas \ref{lem DC tilde} and \ref{lem Phi chi}.
As pointed out above Lemma \ref{lem Phi chi}, the case for the class of functions $\chi$ we have used implies the case of the more general functions $\chi$ allowed in Proposition \ref{prop cylinder}.
\end{proof}

\subsection{Proof of Theorem \ref{thm index}}

Let $\cE_{N \times \R}$ be the Hilbert $C^*(G)$-module constructed from $\Gamma_c(S_+^{N \times \R})$ as in Subsection \ref{sec def index}. We write $D_{\chi}\coloneqq D^{S^{N \times \R}_+} + \chi^{N \times \R}$ for brevity.

\begin{lemma}\label{lem ebert}
For all $a>0$ the operator $(D_{\chi}^2 + a)^{-1}$ on $\cE_{N \times \R}$ is compact.
\end{lemma}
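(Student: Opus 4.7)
The plan is to approximate $(D_\chi^2+a)^{-1}$ in operator norm by cocompactly supported truncations, and show each truncation is compact via the Rellich lemma.

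First, note that $D_\chi^2+a$ is a non-negative self-adjoint regular operator on $\cE_{N\times\R}$ bounded below by $a>0$, so $(D_\chi^2+a)^{-1}$ exists as an adjointable operator; moreover, by elliptic regularity for Dirac-type operators on Hilbert $C^*(G)$-modules (Lemma 4.6 in \cite{Guo18}), it maps $\cE^0=\cE_{N\times\R}$ boundedly into $\cE^2$. Now let $\phi_M\in C_c^\infty(\R)$ be a cutoff equal to $1$ on $[-M,M]$ with $\|\phi_M\|_\infty=1$. Since $N/G$ is compact, the pullback of $\phi_M$ to $N\times\R$ is cocompactly supported, so by Theorem \ref{thm Rellich}, multiplication by $\phi_M$ is a compact operator $\cE^2\to\cE^0$. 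Composing yields $\phi_M(D_\chi^2+a)^{-1}\in\cK(\cE_{N\times\R})$ for each $M$.

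To conclude, it remains to show $\|(1-\phi_M)(D_\chi^2+a)^{-1}\|\to 0$ as $M\to\infty$, and this is where the properness of $\chi$ enters. A direct matrix computation as in Subsection \ref{sec ind cyl} gives
\[
D_\chi^2=D^2+\chi^2\,\mathrm{I}+B,
\]
where $D=D^{S^{N\times\R}_+}$ in its $2\times 2$ form, $(\chi^{N\times\R})^2=\chi^2\,\mathrm{I}$, and $B=D\,\chi^{N\times\R}+\chi^{N\times\R}\,D$ reduces to a Clifford multiplication by $\grad\chi$. Since $\chi'$ is bounded ($\chi'\equiv 1$ for $|t|\geq 2$), $B$ is a uniformly bounded $G$-invariant endomorphism of $S^{N\times\R}_+\oplus S^{N\times\R}_+$ to which Propositions \ref{prop E0} and \ref{prop:boundedhilbert} apply. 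This yields the a priori inequality
\[
\langle (D_\chi^2+a)u,u\rangle_{\cE^0}\geq \langle \chi^2 u,u\rangle_{\cE^0}+(a-\|B\|)\langle u,u\rangle_{\cE^0}
\]
for $u$ in the domain of $D_\chi$, which via the functional calculus for regular self-adjoint operators on $C^*$-modules translates into boundedness of $\chi(D_\chi^2+a)^{-1/2}$, and hence of $\chi(D_\chi^2+a)^{-1}$, as an element of $\calL(\cE^0)$.

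Finally, write $1-\phi_M=\psi_M\,\chi$ for a smooth function $\psi_M$ on $\R$; since $\chi(t)=t$ for $|t|\geq 2$, one can choose $\phi_M$ so that $\|\psi_M\|_\infty\leq C/M$ for $M\geq 2$. Then
\[
\|(1-\phi_M)(D_\chi^2+a)^{-1}\|\leq \|\psi_M\|_\infty\,\|\chi(D_\chi^2+a)^{-1}\|\xrightarrow{M\to\infty}0,
\]
so $(D_\chi^2+a)^{-1}$ is a norm-limit of compact operators, hence compact. The main obstacle will be justifying rigorously, in the Hilbert $C^*(G)$-module setting, that the $C^*(G)$-valued sesquilinear inequality above yields a genuine norm bound on $\chi(D_\chi^2+a)^{-1/2}$; this is precisely the kind of step for which Propositions \ref{prop E0}, \ref{prop:boundedhilbert} and the functional-calculus machinery developed in \cite{Guo18} are tailored.
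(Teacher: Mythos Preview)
Your approach is essentially the same as the paper's, which simply refers to Ebert's Theorem 2.40 for the strategy: show that the cocompactly truncated resolvents $g_n(D_\chi^2+a)^{-1}$ are compact via the Rellich lemma (Theorem \ref{thm Rellich}) together with Lemma 4.6(a) in \cite{Guo18}, and then prove norm convergence using that $D_\chi^2$ is bounded below by a $G$-proper function (in your notation, $\chi^2-\|B\|$). The technical point you flag---passing from the $C^*(G)$-valued form inequality to an honest operator-norm bound---is precisely the step Ebert handles in the Hilbert-space case, and it carries over to the Hilbert-module setting by the same density arguments; your factorisation $(1-\phi_M)=\psi_M\chi$ is a minor repackaging of the same estimate rather than a different idea.
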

\begin{proof}
This is (a special case of) an analogue of Theorem 2.40 in \cite{Ebert18}. The proof proceeds in the same way, with the difference that the operator $D_{\chi}^2$ can only be bounded below by a function $h$ that is \emph{$G$-proper}, in the sense that the inverse image of a compact set is cocompact instead of compact. One chooses the bump functions $g_n$ in the proof of Theorem 2.40 in \cite{Ebert18} to be $G$-invariant and cocompactly supported. Theorem \ref{thm Rellich},  and Lemma 4.6(a) in \cite{Guo18}, imply that  $g_n (D_{\chi}^2+a)^{-1}$ is a compact operator on $\cE_{N \times \R}$. And as   in the proof of Theorem 2.40 in \cite{Ebert18}, one shows that $g_n (D_{\chi}^2+a)^{-1}$ converges to $ (D_{\chi}^2+a)^{-1}$ in the operator norm on $\calL(\cE_{N \times \R})$.
\end{proof}

We will use an analogue of Theorem 6.6 in \cite{Cecchini16}.
\begin{lemma} \label{lem diff cpt}
The operator
\beq{eq op diff}
D_{\chi}\bigl(D_{\chi}^2+f\bigr)^{-1/2} -
D_{\chi}\bigl( D_{\chi}^2+1\bigr)^{-1/2}
\eeq
lies in $\cK(\cE_{N \times \R})$.
\end{lemma}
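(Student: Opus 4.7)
The plan is to exploit the integral representation of the inverse square root of a positive regular self-adjoint operator on a Hilbert $C^*$-module. For any such $T$ with spectrum bounded below by a positive constant, one has
\[
T^{-1/2} = \frac{2}{\pi}\int_0^\infty (T+s^2)^{-1}\,ds,
\]
with convergence in the operator norm. Applying this to both $T = D_\chi^2 + f$ and $T = D_\chi^2 + 1$ and combining with the resolvent identity $(A+\mu)^{-1} - (B+\mu)^{-1} = (B+\mu)^{-1}(B-A)(A+\mu)^{-1}$, the operator in \eqref{eq op diff} can be rewritten as
\[
\frac{2}{\pi}\int_0^\infty D_\chi\,(D_\chi^2+1+s^2)^{-1}\,(1-f)\,(D_\chi^2+f+s^2)^{-1}\,ds.
\]

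The next step is to verify that each integrand lies in $\cK(\cE_{N\times\R})$. By Lemma \ref{lem ebert}, $(D_\chi^2+a)^{-1}$ is compact for every $a>0$, and essentially the same argument, viewing $f$ as a bounded perturbation that can be absorbed into the Callias potential, shows that $(D_\chi^2+f+s^2)^{-1}$ is compact as well. Writing $D_\chi(D_\chi^2+1+s^2)^{-1}$ as the composition of the bounded operator $D_\chi(D_\chi^2+1+s^2)^{-1/2}$ (of norm at most $\tfrac12 (1+s^2)^{-1/2}$ by functional calculus) with the compact $(D_\chi^2+1+s^2)^{-1/2}$ shows that this factor is compact, and since $\cK(\cE_{N\times\R})$ is a two-sided ideal and $1-f$ is a bounded endomorphism, the full integrand is compact.

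To close the argument, I would bound the integrand in operator norm by
\[
\tfrac{1}{2}\|1-f\|_\infty \,(1+s^2)^{-1/2}\,(\delta+s^2)^{-1},
\]
where $\delta>0$ is a spectral lower bound for $D_\chi^2+f$ on $\cE_{N\times\R}$. This estimate is integrable on $[0,\infty)$ (near $s=0$ the bound is $O(\delta^{-1})$ and near $\infty$ it is $O(s^{-3})$), so the integral converges in $\calL(\cE_{N\times\R})$; since compact operators form a norm-closed ideal, the limit is compact. The main obstacle is establishing the uniform lower bound $D_\chi^2 + f \geq \delta$, which is what controls the integrand near $s=0$; this comes from the Callias condition together with the defining property of $f$ in Theorem \ref{thm Guo}, namely that $f$ is chosen precisely so that $(D_\chi^2+f)$ is invertible as a regular self-adjoint operator on $\cE_{N\times\R}$. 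The overall structure parallels Theorem 6.6 in \cite{Cecchini16}, with Hilbert-module compactness supplied by Lemma \ref{lem ebert} in place of classical compactness of resolvents.
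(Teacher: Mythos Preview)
Your proof is correct and follows essentially the same route as the paper: the integral representation of the inverse square root combined with the resolvent identity, compactness of the resolvent of $D_\chi^2$ from Lemma~\ref{lem ebert}, and a norm bound on the integrand guaranteeing convergence in $\calL(\cE_{N\times\R})$. The only cosmetic differences are that the paper orders the resolvent factors the other way (writing $D_\chi(D_\chi^2+f+\lambda^2)^{-1}(f-1)(D_\chi^2+1+\lambda^2)^{-1}$) and cites Lemmas~4.6 and~4.8 of \cite{Guo18} for the cruder but sufficient bound $a(b+\lambda^2)^{-1}$, whereas you give the sharper explicit estimate $\tfrac12\|1-f\|_\infty(1+s^2)^{-1/2}(\delta+s^2)^{-1}$.
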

\begin{proof}
By Proposition 4.12 in \cite{Guo18}, and as in (6.6) in \cite{Cecchini16}, the operator \eqref{eq op diff} equals
\beq{eq diff int}
\frac{2}{\pi} \int_0^{\infty} D_{\chi} (D_{\chi}^2 + f + \lambda^2)^{-1}(f-1) (D_{\chi}^2 + 1 + \lambda^2)^{-1}\, d\lambda.
\eeq
The operator $(D_{\chi}^2 + 1 + \lambda^2)^{-1}$ is compact by Lemma \ref{lem ebert}. Lemmas 4.6 and 4.8 in \cite{Guo18} imply that the integrand in \eqref{eq diff int} is bounded by $a(b+\lambda^2)^{-1}$ for certain $a,b>0$, so the integral converges in operator norm. It therefore defines a compact operator.
\end{proof}

Theorem \ref{thm index} follows from Proposition \ref{prop cylinder} and the following fact.
\begin{proposition}
In the setting of Proposition \ref{prop cylinder},
\[
\ind_G(D_{\chi}) = \ind_G(D^{S^N_+}). 
\]
\end{proposition}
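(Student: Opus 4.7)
\emph{Proof plan.} The strategy is to separate variables on the cylinder $N \times \R$, reducing the Callias index to the Dirac index on $N$ tensored with a Gaussian ground state in the $\R$-direction. The key structural observation is that $S^{N\times\R}_+ = \pi_N^*S^N_+$ with pullback Clifford connection, and since $\hat c(\partial_t) = c(\hat n) = i\gamma^N$ (where $\gamma^N = -ic(\hat n)$ is the grading on $S^N_+$), the Dirac operator separates as
\[
D_0^{S^{N\times\R}_+} = i\gamma^N \partial_t + D_0^{S^N_+},
\]
with $\gamma^N$ anticommuting with $D_0^{S^N_+}$. In particular $(D_0^{S^{N\times\R}_+})^2 = -\partial_t^2 + (D_0^{S^N_+})^2$.

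First I would reduce to a convenient model for $\chi$. Using Lemma \ref{lem diff cpt} to replace $(D_\chi^2+f)^{-1/2}$ by $(D_\chi^2+1)^{-1/2}$, and then Proposition \ref{prop htp invar} along the family $\chi_s(t) = (1-s)\chi(t) + st$ (whose hypotheses are verified via Propositions \ref{prop E0} and \ref{prop:boundedhilbert} on the cylindrical end and the uniform Callias estimate at infinity), I may assume $\chi(t) = t$ globally. Direct computation then gives
\[
(D_0 \mp it)(D_0 \pm it) = -\partial_t^2 + t^2 \mp \gamma^N + (D_0^{S^N_+})^2,
\]
where $D_0 = D_0^{S^{N\times\R}_+}$. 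On the $\pm 1$-eigenspaces of $\gamma^N$, the $t$-part is a shifted harmonic oscillator $-\partial_t^2 + t^2 \mp 1$, whose ground state $e^{-t^2/2}$ has eigenvalue $0$ in one case and whose spectrum is strictly positive in the other. Combined with $(D_0^{S^N_+})^2 \geq 0$, the kernels of $D_0 \pm it$ are identified with $e^{-t^2/2}$ tensored with the $\mp 1$-graded piece of $\ker D_0^{S^N_+}$.

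The final step is to promote this kernel identification to an equality of $K_0(C^*(G))$-classes. The cleanest route is to recognise the whole construction as the exterior Kasparov product of the $\KK(\C, \C)$-cycle on $L^2(\R, \C^{1|1})$ given by the model Callias operator on $\R$ (with class $1 \in K_0(\C)$, verified by the Gaussian kernel computation above) and the $\KK(\C, C^*(G))$-cycle for $D^{S^N_+}$, whence multiplicativity yields $\ind_G(D_\chi) = 1 \cdot \ind_G(D^{S^N_+}) = \ind_G(D^{S^N_+})$. The main obstacle is carefully verifying this exterior product identification at the level of bounded transforms on $\cE_{N \times \R} \cong \cE_N \mathbin{\hat\otimes} L^2(\R, \C^{1|1})$, showing that the difference between $D_\chi(D_\chi^2 + 1)^{-1/2}$ and the graded tensor product of the two bounded transforms is compact; this reduces to the functional calculus estimates from \cite{Guo18} together with Theorem \ref{thm Rellich} and Lemma \ref{lem diff cpt}.
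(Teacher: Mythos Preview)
Your separation-of-variables computation and the identification of the Gaussian ground state are correct and are exactly the engine behind the paper's argument as well. But the execution you propose for the final step differs from the paper's, and as written it has a gap.

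The paper does not invoke the exterior Kasparov product. After using Lemma~\ref{lem diff cpt} to pass to the unbounded cycle $(\cE_N\otimes L^2(\R)\otimes\C^2,\,D_\chi)$, it writes $D_\chi$ explicitly as a $4\times4$ block matrix on
$(\cE_N^+\oplus\cE_N^-\oplus\cE_N^+\oplus\cE_N^-)\otimes L^2(\R)$, observes that the one-dimensional $L^2$-kernel $\C f_+$ of $i\frac{d}{dt}+i\chi$ (with $f_+(t)=e^{-\int_0^t\chi}$) yields a $D_\chi$-invariant submodule $\cE_1\cong\cE_N\otimes\C f_+$ on which $D_\chi$ restricts precisely to $D^{S^N_+}$, and then checks that $D_\chi|_{\cE_1^\perp}$ has strictly positive square and hence gives a degenerate cycle. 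No homotopy to $\chi(t)=t$ is needed, no kernel of $D_0^{S^N_+}$ enters, and no product machinery is used: the argument is a bare-hands module decomposition.

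Your Kasparov-product route can be made to work, but not by the mechanism you describe. In the notation of the paper's display for $D_\chi$, one has
$D_\chi = D_0^{S^N_+}\otimes 1\otimes\sigma_1 + \gamma^N\otimes i\partial_t\otimes\sigma_1 + 1\otimes\chi\otimes\sigma_2$,
whereas the standard product operator $D_0^{S^N_+}\otimes 1 + \gamma^N\otimes B$ (with $B=i\partial_t\,\sigma_1+\chi\,\sigma_2$) differs from this by $(\gamma^N-1)\otimes\chi\otimes\sigma_2$, which is \emph{not} compact---it is unbounded in $t$ on the $\gamma^N=-1$ eigenspace---and moreover the two cycles carry different gradings ($1\otimes\sigma_3$ versus $\gamma^N\otimes\sigma_3$). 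So ``difference of bounded transforms is compact'' is the wrong mechanism. The correct fix is a unitary: conjugating by $U=P_+\otimes 1 + P_-\otimes\sigma_1$ (with $P_\pm$ the spectral projections of $\gamma^N$) simultaneously intertwines the gradings and carries $D_\chi$ exactly to the product operator, after which your multiplicativity argument with $[L^2(\R,\C^2),B]=1\in K_0(\C)$ goes through. If you add this unitary, your approach is complete; the paper's direct decomposition simply sidesteps the issue.
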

\begin{proof}
Let $\cE_{N}$ be the Hilbert $C^*(G)$-module constructed from $\Gamma_c(S_+^{N})$ as in Subsection \ref{sec def index}.
 Then $\cE_{N \times \R} = \cE_{N} \otimes L^2(\R)$. So
Lemma \ref{lem diff cpt} implies that
$
\ind_G(D_{\chi}) 
$
is represented by the unbounded Kasparov cycle 
%(\Todo: here we use an analogue of Thm 6.6 in \cite{Cecchini16}? See Cor 6.7 in that paper. We may need $\chi$ to go to infinity here!)
\beq{eq index cycle}
\left( 
\cE_N \otimes L^2(\R) \otimes \C^2, 
D_{\chi} \right).
%
% \begin{pmatrix}
%0 & D^{S^N_+} \\ D^{S^N_+} & 0
%\end{pmatrix}
%\otimes 1_{L^2(\R)} +  
% \gamma_{S^{N}_+} \otimes
%\begin{pmatrix}
%0 & i\frac{d}{dt} \\  i\frac{d}{dt}  & 0
%\end{pmatrix}
%+
%1_{\cE_N}\otimes
%\begin{pmatrix}
%0 &  -i\chi \\ i\chi & 0
%\end{pmatrix}
%%
%%\gamma_{S^N_+} \otimes
%%\begin{pmatrix}
%%0 & i\frac{d}{dt} -i\chi \\  i\frac{d}{dt} +i\chi & 0
%%\end{pmatrix}
%\right) 
%%\\ =
%%\left( 
%%\cE_N \otimes ((\C f_+)^{\perp}\oplus L^2(\R)),  \begin{pmatrix}
%%0 &  D^{S^N_+} \otimes 1_{L^2(\R)}\\
%%D^{S^N_+} \otimes 1_{(\C f_+)^{\perp}} & 0
%%\end{pmatrix}
%% +   1_{\cE_N} \otimes
%%\begin{pmatrix}
%%0 & i\frac{d}{dt} -i\chi \\  i\frac{d}{dt} +i\chi & 0
%%\end{pmatrix}
%%\right)\\
%%+
%%(\cE_N \otimes \C f_+, D^{S^N_+} 
%%\otimes 1)  
\eeq

The Callias-type operator $D_{\chi}$ 
on 
\[
\Gamma^{\infty}(S^{N \times \R}_+ \oplus S^{N \times \R}_+) = \Gamma^{\infty}(S^{N}_+) \otimes C^{\infty}(\R) \otimes \C^2
\]
is equal to
\beq{eq DNR N R}
%D^{S^{N \times \R}_+} = 
\begin{pmatrix}
0 & D^{S^N_+} \\ D^{S^N_+} & 0
\end{pmatrix}
\otimes 1_{C^{\infty}(\R)} + %1_{ \Gamma^{\infty}(S^{N}_+)}
 \gamma_{S^{N}_+} \otimes
\begin{pmatrix}
0 & i\frac{d}{dt} \\  i\frac{d}{dt}  & 0
\end{pmatrix}
+
1_{\Gamma^{\infty}_c(S^N_+)}\otimes
\begin{pmatrix}
0 &  -i\chi \\ i\chi & 0
\end{pmatrix},
\eeq
where $\gamma_{S^N_+}$ is the grading operator on $S^N_+$, equal to $-i$ times Clifford multiplication by the unit normal vector field $\hat n$ on $N$ pointing into $M_+$ (so that $\gamma_{S^N_+} \otimes i\frac{d}{dt} = c(\hat n) \otimes \frac{d}{dt}$). 
%
%\Todo: not quite, $ \begin{pmatrix}
%0 & D^{S^N_+} \\ D^{S^N_+} & 0
%\end{pmatrix}$ is not well-behaved wrt the decomposition $\C f_+ \oplus (\C f_+)^{\perp} \oplus L^2(\R)$.
%
%\Todo: and argue that the first term on the RHS is degenerate and the second term represents $\ind_G(D^{S^N_+})$?
%
Let $\cE_N^{\pm}$ be the even and odd-graded parts of $\cE_N$, and let $D^{S^N_+}_{\pm}$ be the restriction of $D^{S^N_+}$ to even and odd-graded sections of $S^N_+$, respectively.
With respect to the decomposition
\beq{eq ENR decomp}
\cE_N \otimes L^2(\R) \otimes \C^2 = 
(\cE_N^+ \otimes L^2(\R)) \oplus
(\cE_N^- \otimes L^2(\R)) \oplus
(\cE_N^+ \otimes L^2(\R)) \oplus
(\cE_N^- \otimes L^2(\R)), 
\eeq
%The right-hand side of
the operator \eqref{eq DNR N R} equals
\beq{eq matrix Dchi}
D_{\chi} =
\begin{pmatrix}
0 & 0 & 1_{\cE_N^+} \otimes ( i\frac{d}{dt} -i\chi) & D^{S^N_+}_- \otimes 1_{L^2(\R)} \\
0 & 0 & D^{S^N_+}_+ \otimes 1_{L^2(\R)} &  1_{\cE_N^-} \otimes ( -i\frac{d}{dt} -i\chi) &  \\
1_{\cE_N^+} \otimes ( i\frac{d}{dt} +i\chi) & D^{S^N_+}_- \otimes 1_{L^2(\R)} & 0 & 0 \\
 D^{S^N_+}_+ \otimes 1_{L^2(\R)} & 1_{\cE_N^-} \otimes ( -i\frac{d}{dt} +i\chi) & 0 & 0 
\end{pmatrix}
\eeq

The kernel of $ i\frac{d}{dt} \pm i\chi$ in $C^{\infty}(\R)$ is one-dimensional, and spanned by the function
\[
f_{\pm}(t) = e^{\mp \int_0^t \chi(u)\, du}.
\]
By the properties of $\chi$, $f_+ \in L^2(\R)$, whereas $f_- \not \in L^2(\R)$. It follows that  $i\frac{d}{dt} - i\chi$ is invertible on the appropriate domain, while $ i\frac{d}{dt} + i\chi$ is zero on $\C f_+$ and invertible on $f_+^{\perp}$. 

Consider the submodules 
\[
\begin{split}
\cE_1 &\coloneqq  (\cE_N^+ \otimes \C f_+) \oplus 0 \oplus 0 \oplus  (\cE_N^- \otimes \C f_+);\\
\cE_2 \coloneqq  
\cE_1^{\perp} &=
% \cE_N^+ \otimes f_+^{\perp}) \oplus 0 \oplus 0 \oplus  (\cE_N^- \otimes \C f_+)
(\cE_N^+ \otimes f_+^{\perp})  \oplus
(\cE_N^- \otimes L^2(\R)) \oplus
(\cE_N^+ \otimes L^2(\R)) \oplus
(\cE_N^- \otimes f_+^{\perp}) 
\end{split}
\]
of \eqref{eq ENR decomp}. These are preserved by the operator $D_{\chi}$. (For $\cE_1$, this is immediate from \eqref{eq matrix Dchi}; for $\cE_2$, this follows from the facts that $D_{\chi}$ is symmetric and preserves $\cE_1$.)
%(To see that 
%$A$ preserves $\cE_2$, one uses the fact that the image of $i\frac{d}{dt} - i\chi$ is orthogonal to $f_+$.)
We find that the cycle \eqref{eq index cycle} decomposes as 
\beq{eq index cycle 2}
\left( 
\cE_1, 
\begin{pmatrix}
0 & 0 & 0 & D^{S^N_+}_- \otimes 1_{\C f_+} \\
0 & 0 & 0 & 0 \\
0 & 0 & 0 & 0 \\
D^{S^N_+}_+ \otimes 1_{\C f_+} & 0 & 0 & 0
\end{pmatrix}
\right)
\oplus (\cE_2, D_{\chi}|_{\cE_2}).
\eeq

The operator  $D_{\chi}|_{\cE_2}$ is essentially self-adjoint on the initial domain of compactly supported smooth sections by Proposition 5.5 in \cite{Guo18}, and its square has a positive lower bound in the Hilbert $C^*(G)$-module sense. Thus its self-adjoint closure is invertible, so that
the second term in \eqref{eq index cycle 2}
is homotopic to a degenerate cycle. The first term 
 represents $\ind_G(D^{S^N_+})$.
%
%So the unbounded operator
%\beq{eq ddt chi}
% \begin{pmatrix}
%0 & i\frac{d}{dt} -i\chi \\  i\frac{d}{dt} +i\chi & 0
%\end{pmatrix} 
%\eeq
%on $L^2(\R) \oplus L^2(\R)$ is invertible on the subspace $(\C f_+)^{\perp} \oplus L^2(\R)$.
%As  after Lemma 4.1 in \cite{Kucerovsky01}, one uses invertibility of \eqref{eq ddt chi} on $(\C f_+)^{\perp} \oplus L^2(\R)$ to construct a homotopy between this cycle and $(\cE_N \otimes \C f_+, D^{S^N_+} \otimes 1)$, which represents $\ind_G(D^{S^N_+})$.
%
%(\Todo: explain this last step? Also: how does the grading on $S^N_+$ come in? That apparently works in Kucerovsky's setting.)
\end{proof}
\begin{remark}
A similar argument in the case where $G$ is trivial is hinted at below Lemma 4.1 in \cite{Kucerovsky01}.		
\end{remark}

\section{Proofs of results on positive scalar curvature} \label{sec obstr}

\subsection{Obstruction results} \label{sec pf obstr}

%
%\subsection{An obstruction in $K$-theory}

We now deduce Theorem \ref{thm obstr psc} from Theorem \ref{thm index}, and Corollary \ref{cor obstr psc} from Theorem \ref{thm obstr psc} and index theorems in \cite{HW2,  Wangwang, Wang14}.

\begin{proof}[Proof of Theorem \ref{thm obstr psc}]
This proof is an adaptation of the proof of Theorem 2.1 in \cite{Anghel93}.

First suppose that $M$ is odd-dimensional.
Let $\kappa$ denote scalar curvature. Let $K \subset \overline X$ be a cocompact subset of $M$ such that $N \subset K$, $\kappa > 0$ on $K$, and the distance from  $X \setminus K$ to $Y$ is positive. 
Let 
$\chi \in C^{\infty}(M)^G$ be a function such that $\chi(x) = 1$ for all $x \in Y$ and $\chi(x) = -1$ for all $x \in X \setminus K$.  Consider the operator $D$ as in \eqref{eq D D0}, where $D_0$ is the $\Spin$-Dirac operator on $M$, and the admissible endomorphism $\Phi$ as in \eqref{eq Phi Phi0}, where $\Phi_0$ is pointwise multiplication by $\chi$.
Then the set $M_-$ in Subsection \ref{sec index thm} can be chosen to be cocompact as required, and so that $N = N^- \cup H$, where $f|_{N^-} = -1$. In this setting,
\beq{eq SH}
S^{N}_+ = S_0|_H,
\eeq
 where $S_0 \to M$ is the spinor bundle. (This is consistent with Corollary \ref{cor Spinc}.)

For any $\lambda \in \R$, Lichnerowicz' formula implies that
\[
%\begin{split}
(D_0\pm i \lambda \chi)^2 = D_0^2 \pm i \lambda c(d\chi) + \lambda^2 \chi^2 
\geq \kappa/4 - \lambda \|d\chi\| + \lambda^2 \chi^2.
%\end{split}
\]
On $M \setminus K$, the function on the right-hand side equals $\kappa/4 + \lambda^2 \geq \lambda^2$. Since $\kappa$ is $G$-invariant, and positive on the cocompact set $K$, it has a positive lower bound on that set. Further, $d\chi$ is $G$-invariant and cocompactly supported, hence bounded. So we can choose $\lambda>0$ small enough so that $\kappa/4 - \lambda \|d\chi\| > 0$ on $K$. It follows that $(D_0\pm i \lambda \chi)^2$ has a positive lower bound. This implies that $D + \Phi$ is invertible, so $\ind_G(D + \Phi) = 0$. The claim then follows by Theorem \ref{thm index} and \eqref{eq SH}.

If $M$ is even-dimensional, then the claim follows by applying the result in the odd-dimensional case to the manifold $M \times S^1$.
\end{proof}

%\Todo: if $H = G \times_K F$ for a compact $K$-$\Spin$ manifold $F$, then by injectivity of Dirac induction and $[Q, \Ind] = 0$, $\ind_G(D^H) = 0$ if and only if $\ind_K(D^F) = 0$. Can we use this? But then Atiyah--Hirzebruch, or our version of it, says that the index is zero\ldots? This means that we can induce up examples where $G=K$, but there may not be any of those\ldots (At least this lets us show that the existence result is a natural sharpening of our generalisation of A--H, just like Lawson--Yau is a sharpening of A-H.) Except when $K$ is trivial, e.g.\ $G = \R^n$. But that case is a bit trivial. Or when $K$ acts trivially on $F$, but then $H = G/K \times F$. (Does this give us anything?) So, focus on discrete group case, i.e.\ orbifolds? Then get existence result for connected groups and obstructions for discrete groups.

%\subsection{Topological obstructions}

\begin{proof}[Proof of Corollary \ref{cor obstr psc}]
This follows from Theorem \ref{thm obstr psc} and index formulas for traces defined by orbital integrals applied to $\ind_G(D^H)$. These index formulas are:
\begin{itemize}
\item Theorem 6.10 in \cite{Wang14} if  $G$ is any locally compact group and $g = e$;
\item Theorem 6.1 in \cite{Wangwang} if $G$ is discrete and finitely generated and $g$ is any element;
\item Proposition 4.11 in \cite{HW2} if $G$ is a connected semisimple Lie group and $g$ is a semisimple element.
\end{itemize}
These results imply that in the setting of Corollary \ref{cor obstr psc}, 
\[
\tau_g(\ind_G(D^H)) = \hat A_g(H),
\]
for a trace $\tau_g$.
As $\ind_G(D^H)$ is independent of the choice of Riemannian metric, so is $\hat A_g(H)$. Finally, vanishing of  $\ind_G(D^H)$ implies vanishing of $\hat A_g(H)$ for all $g$ as above.
\end{proof}

%\Todo: compute $\hat A_g(X)$ in an example, e.g.\ $X = G/K$ for $G$ semisimple/reductive, or more specifically $X = \R^2$, $G = \SO(2)$. Then we see that $\hat A_g(X)$ can be nonzero when $\hat A_e(X) =0$. Does the $G/K$ example give applications to $M = G\times_K N$\ldots? In $G/K$ case, we can use index thms from \cite{HW2, HST19} to get a very explicit expression! 
%
%
%\begin{remark}
%Consider the case where $G$ is a connected semisimple Lie group. Then Theorem 3.2 in \cite{HW19} implies that $\hat A_g(H)$ is always zero if $G$ does not have a compact Cartan subgroup. If $G$ does have a compact Cartan subgroup, then Corollary 4.1 in \cite{HW19} implies that vanishing of $\hat A_g(H)$ for all elliptic elements $g$ implies vanishing of $\ind_G(D^H)$, so that no information is lost in passing to the topological obstructions $\hat A_g(H)$.
%
%For general connected semisimple Lie groups, not necessarily admitting compact Cartans, Song and Tang \cite{ST19} constructed higher cyclic cocycles that capture all information about classes in $K_*(C^*_r(G))$. For such groups, the index theorem in \cite{HST19} allows us to deduce nontrivial topological obstructions to $G$-invariant metrics of positive scalar curvature from Theorem \ref{thm obstr psc}. 
%
%Explicitly,   these obstructions are the numbers $\hat A_g(H/(NA))$, where $P = MAN$ is the Langlands decomposition of a maximal cuspidal parabolic subgroup $P<G$ (so $M$ has a different meaning here than before), and $g \in M$ is an elliptic element. Vanishing of all these numbers implies vanishing of $\ind_G(D^H)$.
%\end{remark}

\subsection{Existence result}
\label{sec pf exist}

In the remainder of this section, we prove Theorem \ref{thm:noncptlawsonyauG} by generalising a construction by Lawson and Yau \cite{Lawson74}. We first prove in this subsection an extension of Theorem 3.8 in \cite{Lawson74}, namely Proposition \ref{prop:easyextension}.
	
	To prepare, let us recall the steps in the construction of Lawson and Yau's positive scalar curvature metrics \cite{Lawson74}, which we denote by $\tilde{g}_t$, on a compact manifold $N$. 
	
	Let $K$ be a compact Lie group acting on $N$.
%	First, there is a free action $\alpha$ of $K$ on $K\times N$ given by
%	$$\alpha(l,(k,y))=(lk,ly).$$
%	(Here and in the rest of this subsection, we will use $k$ and $l$ for elements of $K$, $y$ for points in $N$, and reserve the letter $g$ for Riemannian metrics.)
%
%	The orbit space of this action is diffeomorphic to $N$ via the map
%	$$[(k,y)]=[(e,k^{-1}y)]\mapsto k^{-1}y.$$
%	Composing the orbital projection with this diffeomorphism yields a 
Consider the principal $K$-bundle defined by the map
\[	
p\colon K\times N\rightarrow N,\qquad (k,y)\mapsto k^{-1}y.
\]
Take a $K$-invariant Riemannian metric $g_N$ on $N$. Let $b$ be a bi-invariant Riemannian metric on $K$.
%Let $\ker(Tp) \subset T(K \times N)$ be the vertical sub-bundle. We will call its orthogonal complement in $T(K\times N)$ with respect to the product metric $b\oplus g_N$ on $K \times N$ the horizontal sub-bundle.    
Let $\hat{g}$ denote the lift of $g_N$ to the orthogonal complement to $\ker(Tp)$ in $T(K\times N)$ with respect to the product metric $b\oplus g_N$ on $K \times N$.
%this horizontal sub-bundle 
%$
%K \times TN \subset T(K \times N).
%$
%
%(\Todo: we don't seem to actually use the form of $p$ now, so maybe the horizontal subbundle means something different?)
%of $\pi^\alpha$ (as defined by the product metric on $K\times N$). 

For each $t>0$, let $\hat{b}_{t^2}$ be the lift of the metric $t^2 b$ on $K$ to $TK \times N \subset T(K \times N)$. Then $$
	g_t\coloneqq \hat{g}\oplus\hat{b}_{t^2}$$
	is a Riemannian metric on the total space $K\times N$. One can check that, for each $t$, $g_t$ is invariant under the left $K$-action on $K\times N$ defined by
	\[
	l\cdot (k,y)=(kl^{-1},y).
	\]
	Thus $g_t$ descends, via the projection onto the second factor,
	$$\pi\colon K\times N\rightarrow N,\qquad(k,y)\mapsto y,$$
	to a $K$-invariant metric $\tilde{g}_t$ on $N$. Further, one sees that $\pi$ is a Riemannian submersion with respect to the metrics $g_t$ and $\tilde{g}_t$ on the total space and base respectively.
%	\begin{remark}
%	\label{rem:bgpreserved}
%	If $g$ has bounded geometry, then each element of the family $\{\tilde{g}_t\}_{t>0}$ constructed above also has bounded geometry. (\Todo: {This needs a proof.})
%	\end{remark}

	The following proposition shows that, under the conditions stated,  for all sufficiently small $t$, $\tilde{g}_t$ has positive scalar curvature outside a neighbourhood of the fixed point set. It is an adaptation of the proof of Theorem 3.8 in \cite{Lawson74} to the more general setting when $N$ is non-compact but has $K$-bounded geometry; see Definition \ref{def K bdd geom}.
	\begin{proposition}
	\label{prop:easyextension}
		Let $N$ be a manifold with an action by a non-abelian, compact, connected Lie group $K$. Fix a bi-invariant metric on $K$. If $g_N$ is a $K$-invariant Riemannian metric on $N$ with $K$-bounded geometry, then for any neighbourhood $U$ of the fixed point set $N^K$, there exists $t_U>0$ such that for all $t\leq t_U$, the metric $\tilde{g}_t$ constructed above has uniform positive scalar curvature on $N\setminus U$.
	\end{proposition}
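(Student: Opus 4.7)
The plan is to follow Lawson and Yau's \cite{Lawson74} argument for their Theorem 3.8, verifying that each of their pointwise estimates becomes a uniform estimate on $N \setminus U$ under the $K$-bounded geometry hypothesis. The structural mechanism is the same as in their compact-case proof: since $K$ has non-abelian identity component, shrinking the $K$-orbits of $N$ in the submersion $\pi\colon (K \times N, g_t) \to (N, \tilde g_t)$ produces a large positive contribution to the scalar curvature $\tilde\kappa_t$ of $\tilde g_t$, which for small $t$ dominates the remaining terms in the O'Neill formula for the scalar curvature of the base.

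Concretely, applying O'Neill's submersion formulas to $\pi$ and simplifying as in \cite{Lawson74}, one obtains a pointwise identity
\[
\tilde\kappa_t(y) \;=\; \frac{A(y)}{t^2} \;+\; R_t(y),
\]
where $A(y)$ arises from the intrinsic scalar curvature of the shrunken $K$-orbit through $y$; because $K$ has non-abelian identity component, $A(y)$ is strictly positive whenever $\varphi_y \neq 0$ and is bounded below by a continuous positive function of $\|\varphi_y\|$. The remainder $R_t(y)$ is built, exactly as in \cite{Lawson74}, from the Riemann curvature tensor of $(N, g_N)$ and its first derivatives, from $\varphi_y$ (see \eqref{eq:differentiation}) and its first covariant derivatives, and from $t$-regularised rational combinations of $\varphi_y^{*}\varphi_y$ that describe the horizontal distribution of $\pi$; in particular, $R_t(y)$ is polynomial in $t$ with coefficients controlled by these geometric quantities.

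To make this uniform on $N \setminus U$, both halves of $K$-bounded geometry enter. The no-shrinking-orbits condition provides the uniform lower bound $\|\varphi_y\| \geq C_U > 0$ on $N\setminus U$; this yields on one hand a uniform positive lower bound $A(y) \geq A_U > 0$, and on the other hand a uniform upper bound on the $t$-regularised inverses of $\varphi_y^{*}\varphi_y$ that enter the horizontal-lift formula, hence on the corresponding contributions to $R_t(y)$. Bounded geometry of $(N, g_N)$ controls the Riemann tensor and its derivatives in $L^\infty$, which together with the previous bounds gives a constant $E_U > 0$, depending only on $U$ and the $K$-bounded geometry data, with $|R_t(y)| \leq E_U$ for all $y \in N\setminus U$ and all $t \in (0, 1]$. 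Choosing $t_U$ so small that $A_U/t_U^2 > E_U + 1$ then yields $\tilde\kappa_t(y) \geq 1$ on $N\setminus U$ for every $t \leq t_U$, completing the proof.

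The main obstacle will be the term-by-term bookkeeping in the O'Neill expansion. The delicate point is that $\|\varphi_y\|$ need not be bounded above on the non-compact manifold $N$ (indeed it is unbounded already for the standard rotation action of $\SO(3)$ on Euclidean space), so one must check that $\varphi$ always appears in $R_t(y)$ either through its covariant derivative, controlled by bounded geometry, or through the regularised projection operator $\varphi(\varphi^{*}\varphi + t^2)^{-1}\varphi^{*}$, which has operator norm at most $1$; any genuine appearance of an inverse of $\varphi^{*}\varphi$ is restricted to the orthogonal complement of $\ker \varphi_y$, where the no-shrinking-orbits condition is precisely what supplies the missing lower bound on $N\setminus U$.
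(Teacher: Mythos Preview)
Your approach is the same as the paper's: follow Lawson--Yau's proof of their Theorem~3.8, use bounded geometry to make the remainder term uniformly $O(1)$ in $y$, and use the no-shrinking-orbits hypothesis to bound the leading $1/t^2$ contribution from below on $N\setminus U$. Your identification of the genuine analytic obstacle (that $\|\varphi_y\|$ need not be bounded above, so one must check that the $O(1)$ remainder only involves regularised expressions in $\varphi$) is well placed; the paper handles this by citing Lawson--Yau's Propositions~3.6--3.7 together with bounded geometry.

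There is, however, a gap in your justification of the positivity of $A(y)$. You assert that ``because $K$ has non-abelian identity component, $A(y)$ is strictly positive whenever $\varphi_y\neq 0$ and is bounded below by a continuous positive function of $\|\varphi_y\|$.'' This is false as stated: the leading coefficient is (up to the $t$-regularised weights) $\sum_{j,k}\|[e_j(y),e_k(y)]\|_b^2$ over an orthonormal basis of $\mathfrak p_y=\mathfrak k_y^{\perp}$, and this vanishes whenever $\mathfrak p_y$ happens to be an abelian subspace of $\mathfrak k$---for example, if $K=\SU(2)\times T^1$ and $K_y=\SU(2)$, the orbit through $y$ is a circle and contributes nothing. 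The paper (following Lawson--Yau) closes this gap by first reducing, without loss of generality, to $K=\SU(2)$ or $\SO(3)$: these groups have no closed subgroups of codimension~$1$, so $\dim\mathfrak p_y\geq 2$ on $N\setminus N^K$, and constant positive sectional curvature of the bi-invariant metric then gives a uniform lower bound on the bracket sum independent of $y$. You need to invoke this reduction explicitly. A related minor point: the leading term is not literally $A(y)/t^2$ but rather $\tfrac{1}{t^2}\sum_{j,k}\tfrac{\lambda_j^2\lambda_k^2}{(t^2+\lambda_j^2)(t^2+\lambda_k^2)}\|[e_j,e_k]\|_b^2$; the no-shrinking-orbits bound $\lambda_j\geq c_U$ is what forces the $t$-dependent weights to be $\geq 1/4$ once $t\leq c_U$, so it enters the leading term and not only the remainder.
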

	\begin{proof}
		We will follow the steps in the proof of Theorem 3.8 in \cite{Lawson74} and show where the assumptions of bounded geometry and no shrinking orbits (Definition \ref{def:noshrinkingorbits}) are needed to obtain the conclusion.
		
%		Without loss of generality, we may assume that $K=SU(2)$ or $SO(3)$, since any connected compact non-abelian group has such a subgroup. Normalise the bi-invariant metric $b$ on $K$ so that it has constant sectional curvature $1$. 
		
		For each $y\in N\setminus N^K$, there is an orthogonal splitting $\mathfrak{k}=\mathfrak{k}_y\oplus\mathfrak{p}_y$, where $\mathfrak{k}_y$ is the Lie subalgebra of the isotropy subgroup $K_y$ of $y$. 
		%(\Todo: what does this mean, is $\kp_y$ the orthogonal complement of the stabiliser Lie algebra $\kk_y$?) 
		The map $\varphi_y$ from \eqref{eq:differentiation} restricts to an injection on $\mathfrak{p}_y$. Denote the orthogonal complement of $\varphi_y(\mathfrak{p}_y)$ in $T_yN$ by $V_y$. Then
		$$T_{(e,y)}(K\times N)\cong\mathfrak{k}_y\oplus\mathfrak{p}_y\oplus\varphi_y(\mathfrak{p}_y)\oplus V_y.$$
		%With respect to the above splitting, the embedding $\mathfrak{k}\hookrightarrow_{(e,y)}(K\times N)$ is given by $(d,e)\mapsto(d,e,\varphi_y(e),0)$.

		For each $y\in N\setminus N^K$, choose an orthonormal basis $\{ e_1(y),\ldots,e_{l_y}(y) \}$ of $\mathfrak{p}_y$ with respect to $b$ such that for all $j,k = 1, \ldots, l_y$,
\[		
g\bigl(\varphi_y(e_j(y)),\varphi_y(e_k(y)) \bigr)=\sigma^2_j(y)\delta_{jk}
\]
		for some continuous, positive functions $\sigma_j$. For each $j=1,\ldots,l_y$, define a function $\lambda_j\colon N\setminus N^K\to(0,\infty)$ by
		$$\lambda_j(y)\coloneqq\sigma_j(y)(1+\sigma_j(y)^2).$$ 		
		By the calculations in the proofs of Propositions 3.6 and 3.7 in \cite{Lawson74} and the assumption that $g_N$ has bounded geometry, for any neighbourhood $U$ of $N^K$, the scalar curvature of $\tilde{g}_t$ at any $y\in N\setminus U$ is bounded below by
		\begin{equation}
		\label{eq est off NK}
			\sum_{j,k=1}^{l_y}\frac{1}{t^2}\frac{\lambda_j(y)^2\lambda_k(y)^2}{(t^2+\lambda_j(y)^2)(t^2+\lambda_k(y)^2)}\norm{[e_j(y),e_k(y)]}_b^2+O(1)
		\end{equation}
		as $t\rightarrow 0$, where the $O(1)$ term is independent of $y$. 
		%(\Todo: really, not uniformly in $y$ outside a neighbourhood of $N^K$?) 
	Since the $K$-action has no shrinking orbits with respect to $g_N$, there exists $c_U>0$ such that $\lambda_j(y)>c_U$ for each $y\in N\setminus U$ and $j=1,\ldots,l_y$. In particular, for $t\leq c_U$, the expression \eqref{eq est off NK} is bounded below by
		\begin{equation}
		\label{eq est off NK 2}
			\sum_{j,k=1}^{l_y}\frac{1}{4t^2}\norm{[e_j(y),e_k(y)]}_b^2+O(1).
		\end{equation}
Now, without loss of generality we may assume that $K=\SU(2)$ or $K = \SO(3)$, as any compact, connected, non-abelian Lie group has such a subgroup. Since $K$ has no subgroups of codimension $1$, we have $l_y=\dim\mathfrak{p}_y\geq 2$ at each $y\in N\setminus N^K$. For all $j$ and $k$, $\norm{[e_j(y),e_k(y)]}_b^2$ is 4 times the sectional curvature  of the plane spanned by $e_j(y)$ and $e_k(y)$ with respect to the metric $b$, and this is constant in $y$, and positive for $K = \SU(2)$ or $K = \SO(3)$. 
%(\Todo: why nonzero?)
%
%our choice of $b$ implies that $\norm{[e_i(y),e_j(y)]}_b^2=1$ for all $i\neq j$.
%
 Thus for any neighbourhood $U$ of $N^K$, there exists $t_U>0$ such that for all $t\leq t_U$, the expression \eqref{eq est off NK 2}, and hence also \eqref{eq est off NK}, is uniformly positive outside $U$. It follows that for all such $t$, the scalar curvature of $\tilde{g}_t$ is uniformly positive outside $U$.
	\end{proof}

We now deduce Theorem \ref{thm:noncptlawsonyauG} from the following noncompact generalisation of the main result in \cite{Lawson74}.
	\begin{theorem}
	\label{thm:noncptlawsonyau}
	Let $N$ be a manifold that admits an effective action by a compact, connected, non-abelian Lie group $K$, such that the fixed point set $N^K$ is compact. If there exists 
	%a bi-invariant metric on $K$ and
	 a $K$-invariant Riemannian metric on $N$ such that the $K$-action has $K$-bounded geometry, then $N$ admits a $K$-invariant metric with uniformly positive scalar curvature.
	% and bounded geometry.
	\end{theorem}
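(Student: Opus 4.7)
The plan is to adapt Lawson--Yau's strategy for the compact case, as indicated on page 233 of \cite{Lawson74}. Proposition \ref{prop:easyextension} already yields uniform positive scalar curvature for the shrunk metric $\tilde{g}_t$ outside any preassigned neighborhood of the fixed point set $N^K$, so the remaining task is to arrange positive scalar curvature in a neighborhood of $N^K$ and then combine the two regions.

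First I would modify $g_N$ near $N^K$ to produce a $K$-invariant metric $g'_N$ with uniform positive scalar curvature on a neighborhood $V$ of $N^K$, agreeing with $g_N$ outside a compact set and still of $K$-bounded geometry. The key dimension input is that $K$, being non-abelian compact connected, contains a closed subgroup isomorphic to $SU(2)$ or $SO(3)$; since neither has a subgroup of codimension less than $3$ and the $K$-action on $N$ is effective, each fiber of the normal bundle $\nu \to N^K$ is a nontrivial real $K$-representation of real dimension at least $3$. On a $K$-equivariant tubular neighborhood of $N^K$ (identified with a disc bundle in $\nu$), I would build $g'_N$ as a warped product with small-radius sphere fibers, so that the fiberwise Gauss contribution to the scalar curvature is arbitrarily large and positive. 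A $K$-invariant cutoff then glues this to $g_N$ across a compact transition annulus; since the modification is compactly supported, $K$-bounded geometry is automatic.

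Next I would apply Proposition \ref{prop:easyextension} to $g'_N$: for any open $U$ with $N^K \subset U \subset V$, there exists $t_U>0$ such that $\tilde{g}'_t$ has uniform positive scalar curvature on $N \setminus U$ for all $t \leq t_U$. For the interior of $U$, inspection of the lower bound \eqref{eq est off NK} shows that the divergent $1/t^2$ contributions carry weight factors $\lambda_j(y)^2/(t^2+\lambda_j(y)^2)$ that tend to zero as $y \to N^K$ (since the $\sigma_j$ do), while the $O(1)$ remainder is controlled by the bounded geometry of $g'_N$. So on a sufficiently small $U$ and for sufficiently small $t$, the scalar curvature of $\tilde{g}'_t$ on $U$ is a small perturbation of that of $g'_N$ on $U$, which is uniformly positive by construction. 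Combining the two regions then gives uniform positive scalar curvature of $\tilde{g}'_t$ on all of $N$.

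The main obstacle is the interpolation step producing $g'_N$: one must verify that the warped-product construction near $N^K$ can be joined to $g_N$ across the transition annulus while preserving uniform positive scalar curvature on a (possibly smaller) neighborhood of $N^K$. This amounts to a careful but standard radial bump-function argument, exploiting that a large fiberwise positive curvature term can absorb bounded cross terms arising from the cutoff. The compatibility of the two positivity estimates in the final combining step is then routine, since both are uniform and the thresholds on $t$ and on the neighborhood size can be chosen jointly.
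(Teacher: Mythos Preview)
Your two-region split is the same as the paper's, and the far-from-$N^K$ part via Proposition~\ref{prop:easyextension} is correct. The divergence is in the near-$N^K$ region, and there your argument has a genuine gap.

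The paper does not try to arrange positive scalar curvature for the \emph{undeformed} metric near $N^K$. Since $N^K$ is compact and the action effective, it simply invokes Section~4 of \cite{Lawson74}: this yields a $K$-invariant metric $g'$ on $N$, a relatively compact $K$-invariant neighbourhood $U$ of $N^K$, and $t_0>0$ such that the \emph{deformed} metrics $\tilde g'_t$ already have positive scalar curvature on $U$ for all $0<t<t_0$. One then blends $g'$ with the given $K$-bounded-geometry metric $g''$ via a $K$-invariant partition of unity supported in a slightly larger relatively compact $U'$, and applies the deformation to the blend $g_N=f_1g'+f_2g''$. On the $K$-invariant set $U$ one has $g_N=g'$, so $\tilde g_t=\tilde g'_t$ there and positivity on $U$ is inherited directly; outside $U$, $g_N$ differs from $g''$ only on a compact set, so $K$-bounded geometry persists and Proposition~\ref{prop:easyextension} gives uniform positivity for small $t$. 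Taking $t\le\min\{t_0,t_1\}$ finishes the proof.

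Your route instead builds a metric $g'_N$ with positive scalar curvature near $N^K$ \emph{before} deforming, and then claims that on a small $U$ the deformed metric $\tilde g'_t$ is a small perturbation of $g'_N$. That last step is not supported by \eqref{eq est off NK}: the display is a lower bound whose $O(1)$ remainder is a bounded error coming from O'Neill-type cross terms, not the scalar curvature of $g'_N$, so on $U$ it only yields $[\text{nonnegative }1/t^{2}\text{ term}]+O(1)$, which need not be positive however large you make the scalar curvature of $g'_N$. Nor are the metrics $\tilde g'_t$ $C^2$-close to $g'_N$ on any fixed $U$ as $t\to 0$: on the part of $U$ where $\sigma_j\gg t$ the orbit directions are shrunk by a factor of order $t^{2}/\sigma_j^{2}$. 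Making your sketch rigorous would require precisely the uniform near-$N^K$ analysis of $\tilde g_t$ that Lawson--Yau carry out in their Section~4, which is what the paper cites rather than reproves. A secondary issue: your claim that every normal fibre to $N^K$ has real dimension at least $3$ does not follow from effectiveness of the $K$-action on $N$ together with the existence of an $SU(2)$ or $SO(3)$ subgroup, since the isotropy representation on a given normal fibre need not itself be effective (for instance $K=SU(2)\times S^1$ can have $2$-dimensional normal fibres on which the $SU(2)$ factor acts trivially).
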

	\begin{proof}
	Since $N^K$ is compact and the action is effective, by Section 4 of \cite{Lawson74} there exists $t_0>0$, a $K$-invariant neighbourhood $U$ of $K$ with compact closure, and a $K$-invariant Riemannian metric $g'$ on $N$ such that each metric $\tilde{g}_t'$, constructed from $g'$ as in subsection \ref{sec pf exist}, has positive scalar curvature on $U$ for $0<t<t_0$.

 Fix a bi-invariant metric $b$ on $K$. Let $g''$ be a $K$-invariant metric on $N$ for which the $K$-action has $K$-bounded geometry. Let $\{f_1,f_2\}$ be a smooth, $K$-invariant partition of unity on $N$ such that $f_1\equiv 1$ on $U$ and $f_1\equiv 0$ on $N\setminus U'$, where $U'$ is a relatively compact neighbourhood of $N^K$ containing the closure of $U$. 
% Define
%%	\begin{cases}1&\textnormal{for }x\in M\setminus U,\\0&\textnormal{for }x\in M^G,\end{cases}$$
%	\begin{align*}
%	g_1\coloneqq f_1 g',\qquad 	g_2&\coloneqq f_2 g''.
%	\end{align*}
Then 
\[
g_N\coloneqq f_1 g' +f_2 g''
\]	
	is a $K$-invariant Riemannian metric on $N$. Applying the prescription in Subsection \ref{sec pf exist} to $g_N$, we obtain a family $\{\tilde{g}_{t}\}_{t>0}$ of $K$-invariant metrics on $N$. We claim that for sufficiently small $t$, $\tilde{g}_t$ has uniformly positive scalar curvature on $N$.
	%and with respect to which the $K$-action has $K$-bounded geometry. 
	%\textbf{In fact, I think the PSC obtained is uniform. Check.}

%	\begin{remark}
%	\label{rem:zero}
%	Note that the map $\|{\varphi}\|$, from equation \eqref{eq norm phi} in subsection \ref{sec exist}, still makes sense without the pointwise non-degeneracy condition of a metric, so we may define the norm functions	
%Consequently, Definition \ref{def:noshrinkingorbits} can still be formulated for such $g$.
%	\end{remark}
To see this, let $\norm{\varphi}$ be the norm function  \eqref{eq norm phi} 
%(subsection \ref{sec exist})
	 %be the norm functions\footnote{See Remark \ref{rem:zero}.} 
	 associated to the metric $g_N$. Since $g''$ and $g_N$ coincide on $N\setminus U'$, and $g''$ has $K$-bounded geometry, there exists $C_{U'}>0$ such that $\norm{\varphi}(y)\geq C_{U'}$ for all $y\in N\setminus U'$. One sees that $g_N$ has $K$-bounded geometry, and so by Proposition \ref{prop:easyextension}, there exists $t_1=(t_1)_{U}>0$ such that for all $t\leq t_1$, $\tilde{g}_{t}$ has uniformly positive scalar curvature on $N\setminus U$. 	 
%	 Since $g\equiv g'$ on $U$, by Theorem 4.2 in \cite{Lawson74}, there exists $t_2>0$ such that for all $t\leq t_2$, $\tilde{g}_t$ has positive scalar curvature at each point in $U$. 
	 It follows that for all $t\leq\min\{t_0,t_1\}$, $\tilde{g}_t$ has uniformly positive scalar curvature on $N$.
	%For the assertion about bounded geometry, let $\{\tilde{g}''_t\}$ denote the family of Riemannian metrics corresponding to $g''$, constructed as in subsection \ref{sec pf exist}. By Remark \ref{rem:bgpreserved}, each $\tilde{g}''_t$ has bounded geometry. Since $\tilde{g}_t$ is identical to $\tilde{g}''_t$ on the non-compact part $N\setminus U$, $\tilde{g}_t$ has bounded geometry for each $t>0$.
%	It remains to verify that $g$ has positive scalar curvature on $N\setminus N^K$. Since $g$ has no shrinking orbits, the family $\tilde{g}_t$ corresponding to $g$ via the previous construction have positive scalar curvature on $N\setminus N^K$ (to do: check if this actually gives uniformly positive scalar curvature).
	\end{proof}
%	\section{Other Cases}
%	\subsection{Semi-Free Actions}
%	In this subsection we assume that the fixed point set $M^G$ is compact, but we do not need the condition of no shrinking orbits. A simpler construction of a $G$-invariant positive scalar curvature metric is available in this case.
%	
%	On a neighbourhood $U$ of $M^G$ we can use the construction in Lawson--Yau section 4 to give a metric $\tilde{g}_{t_0}$ (for some small $t_0$) that is $G$-invariant and has positive scalar curvature on $U$. Now take a $G$-invariant bounded geometry metric $g'$ on $M\setminus M^G$. Let $f_1$ and $f_2$ be smooth non-negative functions on $M$ such that $f_1\equiv 1$ on $M\setminus U$ and $f_1\equiv 0$ on $M^G$, and $f_2\equiv 1$ on $M^G$ and $f_2\equiv 0$ on $M\setminus U$. 
%	
%	Then $f_1 g'+f_2\tilde{g}_{t_0}$ has bounded sectional curvature on $M\setminus M^G$. Since $G$ acts freely on $M\setminus M^G$, this restriction of this metric descends to the quotient manifold $(M\setminus M^G)/G$, still with bounded sectional curvature. Then $M\setminus M^G\rightarrow(M\setminus M^G)/G$ is a principal $G$-bundle. Now by scaling a bi-invariant metric $b$ on $G$, and lifting it to the vertical subbundle, we obtain a metric of positive scalar curvature on $M\setminus M^G$. (To do: there seems to be a gluing problem here.)
%	\subsection{Pseudo-Free Actions}

\begin{proof}[Proof of Theorem \ref{thm:noncptlawsonyauG}.]
In the setting of Theorem \ref{thm:noncptlawsonyauG}, Theorem \ref{thm:noncptlawsonyau} implies that $N$ admits a $K$-invariant metric with uniformly positive scalar curvature. By Theorem 4.6 in \cite{GHM1} (see also Theorem 58 in \cite{GMW}), this metric induces a $G$-invariant Riemannian  metric on $G \times_K N$ of uniformly positive scalar curvature.
\end{proof}

\section{Further applications of the Callias-type index theorem}  \label{sec appl}

We used Theorem \ref{thm index} to prove Theorem \ref{thm obstr psc}   in Subsection \ref{sec pf obstr}. We give some other applications of Theorem \ref{thm index} here.

\subsection{The image of the assembly map}

If $M/G$ is noncompact, and $G$ is not known to satisfy Baum--Connes surjectivity, then it is a priori unclear if $\ind_G(D + \Phi)$ lies in the image of the Baum--Connes assembly map \cite{Connes94}; see the question raised on page 3 of \cite{Guo18}. Theorem \ref{thm index} implies that this is in fact the case for $G$-Callias-type operators as defined above:\begin{corollary} \label{cor im mu}
The Callias-index of $D+\Phi$ lies in the image of the Baum--Connes assembly map.
\end{corollary}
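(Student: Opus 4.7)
The plan is to deduce this essentially immediately from Theorem \ref{thm index}. By that theorem,
\[
\ind_G(D + \Phi) = \ind_G(D^{S^N_+}) \quad \in K_0(C^*(G)),
\]
where $N \subset M$ is a $G$-invariant cocompact hypersurface and $D^{S^N_+}$ is a $G$-equivariant Dirac-type operator on $N$. The key observation is that the right-hand side, by its very construction via the analytic assembly map of \cite{Connes94}, lies in the image of the Baum--Connes assembly map
\[
\mu_G \colon K_*^G(\underline{EG}) \to K_*(C^*(G)).
\]

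Concretely, since $G$ acts properly and cocompactly on $N$, there is a $G$-equivariant classifying map $N \to \underline{EG}$, and $[N, D^{S^N_+}]$ defines a class in $K_*^G(\underline{EG})$ whose image under $\mu_G$ is $\ind_G(D^{S^N_+})$. Composing with Theorem \ref{thm index} gives that $\ind_G(D + \Phi)$ is the image under $\mu_G$ of this class.

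No serious obstacle is expected; the only thing to verify is that the definition of the equivariant index on the cocompact manifold $N$ used in Theorem \ref{thm index} (via the analytic assembly map, as recalled at the beginning of Subsection \ref{sec def index}) coincides with the standard Baum--Connes assembly construction, which it does by design.
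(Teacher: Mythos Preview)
Your proposal is correct and follows exactly the paper's approach: the paper gives no formal proof at all, simply stating that Theorem \ref{thm index} implies the corollary, since $\ind_G(D^{S^N_+})$ is by definition the image under the analytic assembly map of a class on the cocompact $G$-manifold $N$. Your additional explanation of the classifying map $N \to \underline{EG}$ just makes explicit what the paper leaves implicit.
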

%\begin{remark}
%\end{remark}
%(\Todo: is this related to a question asked by Guoliang somewhere?)

%\Todo: corollary, $\ind_G(D + \Phi) $ is in the image of the Baum--Connes map. Refer to question by Guoliang about operators that are invertible at infinity. Also: did we ask something like this in our first paper?

\subsection{Cobordism invariance of the assembly map}

Theorem \ref{thm index} leads to a perspective on cobordism invariance of the analytic assembly map.
\begin{corollary}\label{cor cobord}
Let $X$ be an odd-dimensional Riemannian manifold with boundary $N$, on which $G$ acts properly and isometrically, preserving $N$, such that $M/G$ is compact. Suppose that a neighbourhood $U$ of $N$ is $G$-equivariantly isometric to $N \times [0,\varepsilon)$, for some $\varepsilon > 0$.
Let $S^X_0 \to X$ be a $G$-equivariant Clifford module, and consider the Clifford module $S^X_0|_N \to N$, graded by $i$ times Clifford multiplication by the inward-pointing unit normal.
Suppose that  $S^X_0|_U \cong S^X_0|_N \times [0,\varepsilon)$.
Let $D^N$ be the Dirac operator on $S^X_0|_N$ associated to a $G$-invariant Clifford connection $\nabla^{N}$ for the Clifford action by $TN$ on $S^X_0|_N$. Then $\ind_G(D^N) = 0$.
\end{corollary}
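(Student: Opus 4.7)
\medskip

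The plan is to realise $D^N$ as the boundary operator $D^{S^N_+}$ appearing in Theorem~\ref{thm index}, applied to a $G$-Callias-type operator whose index vanishes because its symbol is globally invertible.

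First, I would use the collar structure $U \cong N \times [0,\varepsilon)$ to attach a cylindrical end and form the complete Riemannian manifold
\[
M \coloneqq X \cup_N \bigl( N \times [0,\infty) \bigr),
\]
on which $G$ acts properly and isometrically, with $X \subset M$ cocompact (taking $X/G$ compact, as the corollary intends). The $G$-equivariant Clifford module $S_0^X$ extends to a $G$-equivariant Clifford module $S_0^M \to M$ by taking the pullback of $S_0^X|_N$ on the cylindrical end, where the compatibility $S_0^X|_U \cong S_0^X|_N \times [0,\varepsilon)$ ensures smoothness of the glued bundle, metric and Clifford connection. Let $D_0^M$ be the resulting Dirac operator, and form the odd operator $D$ as in \eqref{eq D D0} on $S_0^M \oplus S_0^M$.

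Next, I would choose a smooth $G$-invariant function $\psi\colon M \to [1,\infty)$ equal to $1$ on $X$ and depending only on $t$ on the cylindrical end, with $\psi(n,t) = 1+t$ for $t \geq 1$, and set $\Phi_0 \coloneqq R\psi \cdot \mathrm{id}_{S_0^M}$ for a constant $R > 0$, with $\Phi$ of the form \eqref{eq Phi Phi0}. Then $[\Phi_0,D_0] = -Rc(d\psi)$ is a bounded bundle endomorphism, and
\[
\Phi_0^2 - \|[\Phi_0,D_0]\| = R^2 \psi^2 - R\|d\psi\|,
\]
which for $R$ large enough is bounded below by a positive constant \emph{globally} on $M$, since $\psi \geq 1$ and $\|d\psi\|_\infty$ is finite. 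Hence $\Phi$ is admissible for $D$, and the global invertibility estimate of Lemma~\ref{lem invtble} gives
\[
\ind_G(D + \Phi) = 0 \quad \in K_0(C^*(G)).
\]

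Finally, I would identify the boundary operator. Taking $M_- = X$ as the cocompact side, the inward-pointing normal of $N = \partial X$ equals the inward-pointing normal of $X$, which is the negative of the normal $\hat n$ into $M_+ = N\times[0,\infty)$. Since $\Phi_0|_N = R \cdot \mathrm{id}$ is positive definite, the positive eigenbundle $S^N_+$ of Subsection~\ref{sec index thm} equals all of $S_0^M|_N = S_0^X|_N$, and the grading prescribed there, namely $-ic(\hat n)$, coincides with $ic(\hat n_{\text{in}})$, the grading appearing in the corollary. The Clifford connection $\nabla^{S^N_+}$ of Subsection~\ref{sec index thm} then agrees with $\nabla^N$, so $D^{S^N_+} = D^N$. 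Applying Theorem~\ref{thm index} yields
\[
0 = \ind_G(D + \Phi) = \ind_G(D^{S^N_+}) = \ind_G(D^N),
\]
as required; the even-dimensional case is unchanged, and the odd/even dimension of $X$ plays no essential role here since we use the ungraded Clifford module version \eqref{eq D D0}--\eqref{eq Phi Phi0}. The main technical point to verify carefully is the global Callias inequality and the matching of gradings, but both reduce to straightforward pointwise checks once the cylindrical extension is set up.
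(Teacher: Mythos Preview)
Your proof is correct and follows essentially the same route as the paper: extend $X$ by a cylindrical end to a complete $M$, choose an admissible endomorphism $\Phi_0$ that is positive definite everywhere so that Lemma~\ref{lem invtble} forces $\ind_G(D+\Phi)=0$, then read off $\ind_G(D^N)=0$ from Theorem~\ref{thm index} after identifying $S^N_+ = S_0^X|_N$ and matching the gradings. The only difference is that the paper simply takes $\Phi_0 = \mathrm{id}_{S_0}$, so that $D\Phi+\Phi D = 0$ and the Callias estimate \eqref{eq est Phi} holds globally with $C=1$; your linearly growing $\Phi_0 = R\psi\cdot\mathrm{id}$ works too but is more than is needed.
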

\begin{proof}
Form the manifold $M$ by attaching the cylinder $N \times (0,\infty)$ to $X$ along $U$. Extend  $S^X_0$ and the Clifford action to $M$ in the natural way. We write $S_0$ for the extension of $S^X_0$ to $M$. The connection $\nabla^N$ pulls back to a Clifford connection on $S_0|_{N \times (0,\infty)}$.
Because the $G$-invariant Clifford connections form an affine space, we can extend this pulled-back connection to all of $M$ using a $G$-invariant Clifford connection on $X \setminus N$ and a partition of unity. Let $D_0$ be the associated Dirac operator.

Let $\Phi_0$ be the identity endomorphism on $S_0$. Then $\Phi$, as in \eqref{eq Phi Phi0} is admissible for $D$ as in \eqref{eq D D0}, and  \eqref{eq est Phi} holds on all of $M$. By Lemma \ref{lem invtble}, this implies that $\ind_G(D + \Phi) = 0$. In this case, $S^N_+ = S^X_0|_N$, so by Theorem \ref{thm index}, $\ind_G(D^N) = 0$.
\end{proof}

\subsection{$\Spinc$-Dirac operators}

\begin{corollary} \label{cor Spinc}
Consider the setting of Theorem \ref{thm index}, and assume that 
 $M$ is odd-dimensional and $D_0$ is a $\Spinc$-Dirac operator. Then there is a union $N^+$ of connected components of $N$ and a $\Spinc$-structure on $N^+$ with spinor bundle $S_0|_{N^+}$ such that 
\beq{eq Spinc}
\ind_G(D+ \Phi) = \ind_G(D^{S_0|_{N^+}}) \quad \in K_0(C^*(G)),
\eeq
where $D^{S_0|_{N^+}}$ is a $\Spinc$-Dirac operator on the spinor bundle $S_0|_{N^+} \to N^+$.
If $N$ is connected, then $\ind_G(D+ \Phi) = 0$.
\end{corollary}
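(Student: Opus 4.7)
My plan hinges on the observation that, for a $\Spinc$-Dirac operator, the Callias admissibility condition forces $\Phi_0$ to be a real scalar. First I would unpack the requirement that $D\Phi + \Phi D$ be a bundle endomorphism: in the matrix form \eqref{eq D D0}--\eqref{eq Phi Phi0}, this translates to $[D_0, \Phi_0]$ being of order zero, whose principal symbol at a covector $\xi$ is $i[c(\xi), \Phi_0]$. Since $S_0$ is the spinor bundle of a $\Spinc$-structure on an odd-dimensional manifold, the complex Clifford algebra acts fibrewise irreducibly on $S_0$, so its commutant in $\End(S_0)$ consists of scalars. Combined with the Hermiticity of $\Phi_0$ inherited from $\Phi$, this yields $\Phi_0 = \phi \cdot \id_{S_0}$ for some smooth real-valued function $\phi$ on $M$.

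Next, since $\Phi_0|_N$ is fibrewise invertible by Definition \ref{def Phi}, $\phi$ is nowhere zero on $N$ and has constant sign on each connected component. I would then define $N^+ \subset N$ as the union of components on which $\phi > 0$; the positive eigenbundle $S^N_+$ from Subsection \ref{sec index thm} is then naturally identified with $S_0|_{N^+}$, regarded as a bundle over $N$ that is zero over $N \setminus N^+$. Using the coorientation provided by the unit normal $\hat n$ pointing into $M_+$, the ambient $\Spinc$-structure on $M$ restricts to a $\Spinc$-structure on $N^+$ whose spinor bundle is $S_0|_{N^+}$, graded by $-ic(\hat n)$ exactly as in Subsection \ref{sec index thm}. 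A standard comparison of the ambient Clifford connection $\nabla^{S_0}$ along a cooriented hypersurface then identifies the projected connection $\nabla^{S^N_+}$ with the induced $\Spinc$-connection on $N^+$, so $D^{S^N_+}$ coincides with the $\Spinc$-Dirac operator $D^{S_0|_{N^+}}$. Applying Theorem \ref{thm index} then gives \eqref{eq Spinc}.

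For the vanishing statement when $N$ is connected, observe that $N^+$ is either empty or all of $N$. In the first case, $S^N_+ = 0$ and the right-hand side of \eqref{eq Spinc} is trivially zero. In the second case, $N = \partial M_-$ bounds the $G$-cocompact $\Spinc$-manifold $M_-$, whose $\Spinc$-structure restricts to that on $N$. After a $G$-invariant deformation of the metric on $M_-$ near $N$ to cylindrical form (which does not alter $\ind_G(D^{S_0|_N})$ by standard arguments), Corollary \ref{cor cobord} applies and gives $\ind_G(D^{S_0|_N}) = 0$.

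The main technical obstacle is the identification, in the second paragraph, of the projected Clifford connection $\nabla^{S^N_+}$ with the $\Spinc$-connection on $N^+$ coming from the restricted $\Spinc$-structure: this is classical in the compact setting, but in our noncompact, equivariant framework one must track second fundamental form terms carefully to confirm that the operator produced by Theorem \ref{thm index} is genuinely the $\Spinc$-Dirac operator on $N^+$ rather than a twisted variant. Nothing conceptually new is required beyond this bookkeeping, since the irreducibility argument in the first paragraph already guarantees that no nontrivial endomorphism twist can appear.
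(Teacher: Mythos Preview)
Your argument is correct, and the first part (identifying $S^N_+$ with $S_0|_{N^+}$ via irreducibility of the Clifford action) is essentially the same as the paper's proof, though you make the scalar nature of $\Phi_0$ explicit while the paper phrases it in terms of Clifford-invariant subbundles of an irreducible module.

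For the vanishing when $N$ is connected and $N^+ = N$, you take a genuinely different route from the paper. You invoke cobordism invariance (Corollary~\ref{cor cobord}) applied to $X = M_-$, after deforming the metric to be cylindrical near $N$; this is correct and gives a pleasant geometric reason for the vanishing. The paper instead argues more directly: once \eqref{eq Spinc} is established, it applies equally well to the Callias operator with the \emph{constant} potential $\Phi_0 = 1$, for which the commutator $[D_0, \Phi_0]$ vanishes identically and $\Phi^2 = 1$ on all of $M$, so Lemma~\ref{lem invtble} gives
\[
\ind_G(D+\Phi) \;=\; \ind_G(D^{S_0|_N}) \;=\; \ind_G\!\left(D + \begin{pmatrix} 0 & i \\ -i & 0 \end{pmatrix}\right) \;=\; 0.
\]
This avoids the metric deformation and the appeal to Corollary~\ref{cor cobord}. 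Since Corollary~\ref{cor cobord} is itself proved by combining Theorem~\ref{thm index} with Lemma~\ref{lem invtble} for a constant potential, your approach is really a repackaged form of the same mechanism; the paper's version just cuts out the intermediate step.

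The bookkeeping you flag about identifying $\nabla^{S^N_+}$ with an induced $\Spinc$-connection is not a genuine obstacle here: any two $G$-invariant Clifford connections on $S_0|_{N^+}$ differ by a bounded zeroth-order term, so the resulting Dirac operators have the same equivariant index. This is why the paper speaks of ``a $\Spinc$-Dirac operator'' rather than ``the'' one.
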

\begin{proof}
%To prove part (a), let $\chi \in C^{\infty}(M)$ be a $G$-invariant, nonnegative function such that $\chi$ is constant $1$ outside a cocompact set $Z$. Then $\| [D, \chi]\| = \|d\chi\|$ is cocompactly supported and hence bounded and, outside $Z$, 
%\[
%\chi^2 = 1  = \|[D, \chi]\| + 1.
%\]
%If $N$ lies outside $Z$, then $S^N_+ = S_0|_N \not= 0$. So we may take $\Phi_0 = \chi$.
%
%In the $\Spinc$-setting, 
 Since $S_0$ is an irreducible Clifford module, and $S^N_+ \subset S_0|_N$  is invariant under the Clifford action of $TM|_N$, over each connected component $X$ of $N$ the bundle $S^N_+|_X$ is either zero or $S_0|_X$. Since $M$ is odd-dimensional, $S_0|_X$ is the spinor bundle of the $\Spinc$-structure of $X$ that it inherits from $M$. So \eqref{eq Spinc} follows.
% 
% ,  the bundle $S^N_+$ is either the zero bundle or the whole  bundle $S_0|_N$. And $S_0|_N$ is the spinor bundle of the $\Spinc$-structure of $N$ that it inherits from $M$.

If $N$ is connected, then we either have $N^+ = \emptyset$, in which case $\ind_G(D+ \Phi) = 0$ because $S^N_+$ is the zero bundle, or $N^+ = N$, in which case  \eqref{eq Spinc} and Lemma \ref{lem invtble}  imply that
\[
 \ind_G(D+ \Phi) =   \ind_G \left(D+ \begin{pmatrix} 0 & i \\ -i & 0 \end{pmatrix} \right)  = 0.
\]
\end{proof}

There is a converse to Corollary \ref{cor Spinc} in the following sense. Let $N^+ \subset N$ be any union of connected components; there is a finite number of such subsets of $N$ since $N/G$ is compact. We can define an admissible endomorphism $\Phi_0$ such that \eqref{eq Spinc} holds, by taking $\Phi_0$ to be multiplication by a $G$-invariant function on $M$ that equals $1$ on $N^+$ and $-1$ on $N \setminus N^+$, and is constant $1$ or $-1$ outside a cocompact set. Thus given any hypersurface $N$ bounding a cocompact set, and any set $N^+$ of connected components of $N$, we have an index
\beq{eq Spinc Callias}
\ind_G^{N^+}(D) := \ind_G(D + \Phi),
\eeq
with $\Phi$ and $\Phi_0$ related as in \eqref{eq Phi Phi0},
%are as in Corollary \ref{cor Spinc},
independent of the choice of $\Phi_0$ with the property that $\Phi_0$ is positive definite on $N^+$ and negative definite on $N \setminus N^+$.

%In the context of part (b) of Corollary \ref{cor Spinc}, we obtain a Callias-type index of $\Spinc$-Dirac operators for non-cocompact group actions,
%\beq{eq Spinc Callias}
%\ind_G^{\Callias}(D) := \ind_G(D+\Phi) \quad \in K_0(C^*(G)),
%\eeq
%where $\Phi_0$ is as in part (a) of Corollary \ref{cor Spinc}. 
%%This index is canonical in the sense that it is the only Callias-type index of $D$ that is not always zero for trivial reasons,  and  that can be defined by adding an admissible endomorphism to $D$, and is independent of that endomorphism. 
%This index is well-defined whenever $M$ admits a cocompact,  $G$-invariant hypersurface $N \subset M$ that partitions $M$ into two disjoint open sets.

Versions of the index \eqref{eq Spinc Callias} are sometimes used in applications of Callias-type index theorems to obstructions to positive scalar curvature for $\Spin$-manifolds, see  \cite{Anghel93,  Cecchini16} and the proof of Theorem \ref{thm obstr psc} in Subsection \ref{sec pf obstr}.

\subsection{Induction} \label{sec res ind}

Suppose that $G$ is an almost connected, reductive Lie group, and let $K<G$ be maximal compact.
In \cite{GMW, Hochs09, HW2} some results were proved relating $G$-equivariant indices to $K$-equivariant indices via Dirac induction. Such results allow one to deduce results in equivariant index theory for actions by noncompact groups from corresponding results for compact groups. This was applied to obtain results in geometric quantisation \cite{Hochs09, Hochs15, HM14} and geometry of group actions \cite{GMW, HM16}. 
Corollary \ref{cor Ind} below is a version of this idea for the index of Definition \ref{def index}. 

To state this corollary, we consider the setting of Subsection \ref{sec index thm}. Using Abels' slice theorem, we write $M = G \times_K Y$, for a $K$-invariant submanifold $Y \subset M$, and $S_0 = G \times_K S_0|_Y$. 
Let $\kg= \kk \oplus \kp$ be a Cartan decomposition. Then $TM \cong G \times_K(TY\oplus \kp)$. We assume that the $G$-invariant Riemannian metric on $M$ is induced by a $K$-invariant Riemannian metric on $Y$ and an $\Ad(K)$-invariant inner product on $\kp$ via this identification. 

We assume for simplicity that the adjoint representation $\Ad\colon K \to \SO(\kp)$ lifts to the double cover $\Spin(\kp)$ of $\SO(\kp)$. (This is true for a double cover of $G$.) Then the standard $\Spin$ representation $S_{\kp}$ of $\Spin(\kp)$ may be viewed as a representation of $K$. We assume that $S_0|_Y = S_0^Y \otimes S_{\kp}$ for a Clifford module $S_0^Y \to Y$. 
The Clifford action by $TM|_Y$ on $S_0|_Y$ equals $c_Y \otimes 1_{S_{\kp}} + 1_{S_0^Y} \otimes c_{\kp}$, for a Clifford action $c_Y$ by $TY$ on $S_0^Y$, and the Clifford action $c_{\kp}$ of $\kp$ on $S_{\kp}$. We choose the $G$-invariant connection $\nabla^{S_0}$ so that $\nabla^{S_0}|_Y = \nabla^{S_0^Y} \otimes 1_{S_{\kp}} + 1_{S_0^Y} \otimes \nabla^{S_\kp}$, for Clifford connections $\nabla^{S_0^Y}$ on $S_0^Y$ and $\nabla^{S_\kp}$ on $Y \times S_{\kp}$

Since $\Phi_0$ is $G$-equivariant, it is determined by its restriction to $Y$, which is a $K$-equivariant endomorphism of $S_0|_Y$.
We  assume that $\Phi_0|_Y = \Phi_0^Y \otimes 1_{\kp}$, for a $K$-equivariant endomorphism $\Phi_0^Y$ of $S_0^Y$. (What follows remains true if $\Phi_0 = \Phi_0^Y \otimes 1_{\kp} + 1\otimes \Phi_0^{\kp}$ for an $\Ad(K)$-invariant endomorphism of $S_{\kp}$, but this requires a small extra argument that we omit here.) 

Consider the Dirac operator $D_0^Y := c_Y \circ \nabla^{S_0^Y}$ on $\Gamma^{\infty}(S_0^Y)$. Form $D^Y$ from $D_0^Y$ as in \eqref{eq D D0} and $\Phi^Y$ from $\Phi_0^Y$ as in \eqref{eq Phi Phi0}. Let $R(K)$ be the representation ring of $K$ and
\begin{equation}
\label{eq DInd}
\DInd_K^G\colon R(K) \to K_*(C^*(G))
\end{equation}
be the Dirac induction map \cite{Connes94}.
\begin{corollary} \label{cor Ind}
The operator $D^Y + \Phi^Y$ is a $K$-equivariant Callias-type operator, and
\[
\ind_G(D + \Phi) = \DInd_K^G(\ind_K(D^Y + \Phi^Y)).
\]
\end{corollary}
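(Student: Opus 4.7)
My proposal proceeds in three main stages: verifying the $K$-Callias condition, setting up a compatible hypersurface, and invoking a known compatibility between Dirac induction and the equivariant index of Dirac operators on cocompact hypersurfaces.

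First, I would verify that $D^Y + \Phi^Y$ is a $K$-equivariant Callias-type operator on $S_0^Y \oplus S_0^Y \to Y$. The tensor product decomposition $S_0|_Y = S_0^Y \otimes S_{\kp}$ together with the assumed splitting $\nabla^{S_0}|_Y = \nabla^{S_0^Y} \otimes 1 + 1 \otimes \nabla^{S_\kp}$ shows that, viewed along the slice $Y$, the operator $D_0|_Y$ restricted to $Y$-directions is $D_0^Y \otimes 1_{S_\kp}$, while the $\kp$-directional part commutes with $\Phi_0|_Y = \Phi_0^Y \otimes 1_{S_\kp}$ because $\Phi_0$ is $G$-equivariant (hence $\kp$-constant in the slice coordinates). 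Consequently $[D_0|_Y, \Phi_0|_Y] = [D_0^Y, \Phi_0^Y] \otimes 1_{S_\kp}$, which is a bundle endomorphism since the left-hand side is. Choosing the set $Z \subset M$ in Definition \ref{def Phi} to be $K$-saturated and $G$-invariant (this is possible since the slice $Y$ is $K$-invariant and $M = G \times_K Y$), we may take $Z^Y \coloneqq Z \cap Y$, a $K$-cocompact subset of $Y$. Then the pointwise estimate \eqref{eq est Phi} for $(\Phi_0, D_0)$ outside $Z$ yields the corresponding estimate for $(\Phi_0^Y, D_0^Y)$ outside $Z^Y$, since the norm of $[D_0^Y, \Phi_0^Y]$ and of $(\Phi_0^Y)^2$ agree on $Y$ with those of their $S_\kp$-tensored versions.

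Second, I would apply Theorem \ref{thm index} to both sides. On $M$ we choose the $G$-invariant cocompact hypersurface $N$ of the form $N = G \times_K N^Y$, for a $K$-invariant cocompact hypersurface $N^Y \subset Y$ bounding $Y_-$ (this is possible by $G$-equivariantly isotoping the hypersurface from Subsection \ref{sec index thm} into the form $G \times_K N^Y$, using $G$-invariance of $\Phi$ and the slice decomposition). Then Theorem \ref{thm index} gives
\[
\ind_G(D + \Phi) = \ind_G(D^{S^N_+}), \qquad \ind_K(D^Y + \Phi^Y) = \ind_K(D^{S^{N^Y}_+}),
\]
where $S^N_+ \to N$ and $S^{N^Y}_+ \to N^Y$ are the positive eigenbundles of $\Phi_0|_N$ and $\Phi_0^Y|_{N^Y}$. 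Since $\Phi_0|_N$ acts as $\Phi_0^Y|_{N^Y} \otimes 1_{S_\kp}$ in the slice description, its positive eigenbundle decomposes as $S^N_+ = G \times_K (S^{N^Y}_+ \otimes S_\kp)$, with Clifford actions, gradings (by the inward normal) and Clifford connections inherited in the natural way from the $K$-equivariant data on $N^Y$.

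Third, I would appeal to the compatibility between the $G$-equivariant analytic assembly index on such an induced cocompact manifold and the $K$-equivariant index of the underlying slice operator, twisted by $S_\kp$. Concretely, the $\Spin$-representation twist $S^{N^Y}_+ \otimes S_\kp$ is precisely the geometric ingredient that turns the $K$-index on $N^Y$ into the Dirac-induced class in $K_*(C^*(G))$. This is the content of the multiplicativity of Dirac induction applied to cocompact manifolds of the form $G \times_K N^Y$, as used in \cite{GMW, Hochs09, HW2}: for any $K$-equivariant Clifford module $E \to N^Y$,
\[
\ind_G\!\bigl(D^{G \times_K (E \otimes S_\kp)}\bigr) = \DInd_K^G\!\bigl(\ind_K(D^{E})\bigr).
\]
Applying this with $E = S^{N^Y}_+$ combines with the two identities above to give the statement.

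The main obstacle will be the last step: carefully matching the Clifford module, connection, and grading data on $G \times_K (S^{N^Y}_+ \otimes S_\kp)$ with those of $S^N_+$, so that the cited induction principle applies verbatim. In particular, one must check that the Clifford connection on $S^N_+$ coming from projection of $\nabla^{S_0}|_N$ agrees (up to a cocompactly supported perturbation, which does not affect the index by Corollary \ref{cor cocpt change} at the level of $G$-Callias indices and by homotopy invariance of the assembly map on the $K$-side) with the induced connection, and that the grading by the inward normal is compatible with the $\Z/2$-grading on $S_\kp$ used in the definition of $\DInd_K^G$. These are standard compatibility checks, analogous to those in \cite[\S 5]{GMW}, but they are where the real work lies.
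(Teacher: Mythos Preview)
Your proposal is correct and follows essentially the same route as the paper: apply Theorem~\ref{thm index} on both the $G$-side (to $N = G\times_K N^Y$) and the $K$-side (to $N^Y = Y\cap N$), and then invoke the cocompact induction result of \cite{GMW, Hochs09, HW2} to identify $\ind_G(D^{S^N_+})$ with $\DInd_K^G(\ind_K(D^{S^{N^Y}_+}))$. The paper's proof is terser---it omits your first paragraph verifying the $K$-Callias condition and the compatibility checks you flag at the end---but the logical skeleton is identical.
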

\begin{proof}
Theorem \ref{thm index} implies that $\ind_G(D + \Phi) =\ind_G(D^{S^N_+})$. Write $Y^N := Y \cap N$, so that $N = G \times_K Y^N$. Then $Y^N$ is a compact manifold. Define $D^{S^{Y^N}_+}$ analogously to $D^Y$. The induction result for cocompact actions, Theorem, 4.5 in \cite{Hochs09}, Theorem 5.3 in \cite{HW2} or Theorem 46 in \cite{GMW}, implies that 
\[
\ind_G(D^{S^N_+}) = \DInd_K^G(\ind_K(D^{S^{Y^N}_+})).
\]
Another application of Theorem \ref{thm index}, now with $G$ replaced by $K$, or Theorem 1.5 in \cite{Anghel93} with a compact group action added, shows that $\ind_K(D^{S^{Y^N}_+} )= \ind_K(D^Y + \Phi^Y)$.
\end{proof}

\subsection{Callias quantisation commutes with reduction}

Theorem 3.11 in \cite{GHM2} is a \emph{quantisation commutes with reduction} result for the equivariant index of $\Spinc$-Callias-type operators. This result applies to reduction at the trivial representation of $G$; i.e.\ to an index defined in terms of $G$-invariant sections of $S$. Using Theorem \ref{thm index}, we can generalise this result to reduction at more general representations, or more precisely, at arbitrary generators of $K_0(C^*_r(G))$. Furthermore, this result is `exact' rather than asymptotic as Theorem 3.11 in \cite{GHM2}, in the sense that one does not need to consider high powers of a line bundle. 

In the setting of Subsection \ref{sec index thm}, we now assume that $M$ is odd-dimensional, and that $S_0$ is the spinor bundle for a $G$-equivariant $\Spinc$-structure. 
%Let $L \to M$ be the determinant line bundle of this $\Spinc$-structure. 
Let $D_0$ be the $\Spinc$-Dirac operator on  $\Gamma(S_0)$, defined by the Clifford connection corresponding to a connection $\nabla^L$ on the determinant line bundle $L$.

The \emph{$\Spinc$-moment map} associated to $\nabla^L$ is the map
$
\mu\colon M \to \kg^*
$
such that for all $X \in \kg$,
\[
2\pi i \langle \mu, X \rangle = \calL_X - \nabla^L_{X^M}, \quad \in \End(L) = C^{\infty}(M, \C),
\]
where $ \calL_X $ denotes the Lie derivative with respect to $X$, and $X^M$ is the vector field induced by $X$. The \emph{reduced space} at an element $\xi \in \kg^*$ is defined as $M_{\xi} := \mu^{-1}(\xi)/G_{\xi}$, where $G_{\xi}$ is the stabiliser of $\xi$. This reduced space is noncompact in general, and may not be smooth. But the reduced space $N_{\xi} := (\mu^{-1}(\xi)\cap N)/G_{\xi}$ is compact. It is not always a smooth manifold, but if it is, and $\xi \in \kk^*$, then we have an identification $N_{\xi} \cong Y^N_{\xi} := (\mu^{-1}(\xi) \cap Y \cap N)/K_{\xi}$, with $Y$ and $Y^N$ as in Subsection \ref{sec res ind}, including $\Spinc$-structures. See Propositions 3.13 and 3.14 in \cite{HM14}. Let $N^+ \subset N$
 be as in Corollary \ref{cor Spinc}. Then we similarly have $N^+_{\xi} \cong Y^{N^+}_{\xi} := (\mu^{-1}(\xi) \cap Y \cap N^+)/K_{\xi}$ in the smooth case, including $\Spinc$-structures.
 
There is a nontrivial way to define a \emph{$\Spinc$-quantisation} $Q^{\Spinc}(Y^{N^+}_{\xi}) \in \Z$, even when $Y^{N^+}_{\xi}$ is not smooth, described in detail in Section 5.1 of \cite{Paradan17}. Motivated by the identification $N^+_{\xi} \cong Y^{N^+}_{\xi}$ in the smooth case, we define $Q^{\Spinc}(N^+_{\xi}) := Q^{\Spinc}(Y^{N^+}_{\xi})$ for $\xi \in \kk^*$. 

%By the non-equivariant version of Corollary \ref{cor Spinc}, we have
%\[
%\ind^{\Callias}(D_{M_{\xi}}) = Q^{\Spinc}(N_{\xi}) 
%\]
%in the smooth case, for a $\Spinc$-Dirac operator $D_{M_{\xi}}$ on $M_{\xi}$. So we may consistently define the \emph{Callias-$\Spinc$ quantisation} of the possibly noncompact and non-smooth reduced space $M_{\xi}$ as $Q^{\Spinc}(M_{\xi}) := Q^{\Spinc}(N_{\xi})$.

Let $T<K$ be a maximal torus, and fix  a positive root system for $(K, T)$.
Let $V \in \hat K$ have highest weight $\lambda \in i\kt^* \hookrightarrow \kk^* \hookrightarrow \kg^*$. (The first inclusion is defined by $\kt^* \cong (\kk^*)^{\Ad^*(T)}$, the second by the Cartan decomposition.) Following \cite{Paradan17, Paradan18}, we call an element $\xi \in \kk^*$
 an \emph{ancestor} of $V$ if the coadjoint orbit $\Ad^*(K) \xi$  is admissible in the sense of \cite{Paradan18}, and  its $K$-equivariant $\Spinc$-quantisation is $V$. There exists a finite set $A(V)$ of ancestors representing all different such coadjoint orbits.
 
Let $C^*_r(G)$ be the reduced group $C^*$-algebra of $G$ 
%$R(K)$ the representation ring of $K$, 
and $\DInd_K^G$ the Dirac induction map \eqref{eq DInd}. By the Connes--Kasparov conjecture, proved in \cite{Chabert03, Lafforgue02b, Wassermann87},  the abelian group $K_*(C^*_r(G))$ is free, with  generators $\DInd_K^G[V]$, where $V$ runs over $\hat K$.

Recall the definition of the Callias index of $\Spinc$-Dirac operators \eqref{eq Spinc Callias}.
 \begin{corollary}[Callias quantisation commutes with reduction] \label{cor quant}
We have
\beq{eq quant}
\ind_G^{N^+}(D) = \bigoplus_{V \in \hat K} \sum_{\xi \in A(V)} Q^{\Spinc}(N^+_{\xi}) \DInd_K^G[V] \quad \in K_*(C^*_r(G)).
\eeq
\end{corollary}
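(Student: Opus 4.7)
The plan is to combine three ingredients: the $G$-Callias-type index theorem (Theorem \ref{thm index}) in its $\Spinc$ form (Corollary \ref{cor Spinc}), the Dirac induction formula of Corollary \ref{cor Ind}, and Paradan's $\Spinc$ version of quantisation commutes with reduction for compact group actions \cite{Paradan17}. First I would use the very definition \eqref{eq Spinc Callias} together with Corollary \ref{cor Spinc} to identify
\[
\ind_G^{N^+}(D) = \ind_G(D^{S_0|_{N^+}}),
\]
the $G$-equivariant index of a $\Spinc$-Dirac operator on the cocompact hypersurface $N^+$. Writing $M = G \times_K Y$ via Abels' slice theorem as in Subsection \ref{sec res ind}, we have $N^+ = G \times_K Y^{N^+}$ with $Y^{N^+} \coloneqq Y \cap N^+$ a compact $K$-manifold carrying a $K$-equivariant $\Spinc$-structure obtained from that of $N^+$ by tensoring with $S_{\kp}$ as explained in Subsection \ref{sec res ind}. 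Corollary \ref{cor Ind}---or rather the underlying cocompact induction result used in its proof---then yields
\[
\ind_G^{N^+}(D) = \DInd_K^G\bigl(\ind_K(D^{Y^{N^+}})\bigr),
\]
where $D^{Y^{N^+}}$ is the corresponding $K$-equivariant $\Spinc$-Dirac operator.

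Next I would invoke Paradan's theorem \cite{Paradan17} on the compact $K$-manifold $Y^{N^+}$, equipped with the moment map inherited from $\mu$ and the determinant line bundle inherited from $L$. This gives the $R(K)$-decomposition
\[
\ind_K(D^{Y^{N^+}}) = \sum_{V \in \hat K} \sum_{\xi \in A(V)} Q^{\Spinc}(Y^{N^+}_\xi)\, [V],
\]
where the $\Spinc$-quantisations of the (possibly singular) reduced spaces $Y^{N^+}_\xi$ are those defined in Section 5.1 of \cite{Paradan17}. Applying $\DInd_K^G$ to both sides, and using the definition $Q^{\Spinc}(N^+_\xi) \coloneqq Q^{\Spinc}(Y^{N^+}_\xi)$ stated just before the corollary (which is consistent in the smooth case by Propositions 3.13 and 3.14 of \cite{HM14}), together with the Connes--Kasparov isomorphism $K_*(C^*_r(G)) \cong \bigoplus_{V \in \hat K} \Z \cdot \DInd_K^G[V]$ from \cite{Chabert03, Lafforgue02b, Wassermann87}---which guarantees both that the direct sum on the right-hand side of \eqref{eq quant} makes sense and that the equality of classes can be read off termwise---delivers precisely the formula \eqref{eq quant}.

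The principal obstacle will be to verify carefully the geometric compatibility at the slice: one must check that the $\Spinc$-structure, the determinant line bundle, the moment map, and the notion of admissible coadjoint orbit associated to the compact manifold $Y^{N^+}$ (with its tensor factor $S_{\kp}$) match the data to which Paradan's theorem is applied, and that for $\xi \in \kk^*$ the identification $N^+_\xi \cong Y^{N^+}_\xi$ extends to a genuine equality of $\Spinc$-quantisations in the singular setting built into Paradan's framework. A secondary technical point is that, although the induction formula in Corollary \ref{cor Ind} is stated for a single admissible endomorphism, it must be applied uniformly as $\Phi_0$ varies in the connected component of admissible potentials distinguishing $N^+$ from $N \setminus N^+$; this is routine since both sides of \eqref{eq quant} depend on $\Phi_0$ only through $N^+$.
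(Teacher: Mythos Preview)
Your proposal is correct and follows essentially the same approach as the paper. The paper's proof is more compressed: after invoking Corollary \ref{cor Spinc} to obtain $\ind_G^{N^+}(D) = \ind_G(D^{S_0|_{N^+}})$, it appeals directly to Theorem 4.6 in \cite{HM14} (a quantisation-commutes-with-reduction result for cocompact proper actions) to get the right-hand side, whereas you unpack that black box into its two constituents---the cocompact Dirac induction step and Paradan's compact-group theorem \cite{Paradan17}. The geometric compatibility checks you flag are exactly what go into the proof of Theorem 4.6 in \cite{HM14}, so your more explicit route is valid and arguably more self-contained given that the induction result already appears in this paper.
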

\begin{proof}
By Corollary \ref{cor Spinc}, $\ind_G^{N^+}(D) = \ind_G(D^{S_0|_{N^+}})$, where now $D^{S_0|_{N^+}}$ is a $\Spinc$-Dirac operator on $N^+$. Theorem 4.6 in \cite{HM14} implies 
 that $\ind_G(D^{S_0|_{N^+}})$ equals
\[
\bigoplus_{V \in \hat K} \sum_{\xi \in A(V)} Q^{\Spinc}(N^+_{\xi}) \DInd_K^G[V]. 
\]
%So \eqref{eq quant} follows from the equality $ Q^{\Spinc}(N_{\xi})  =  Q^{\Spinc}(M_{\xi})$.
\end{proof}
\begin{remark}
In cases where $M_{\xi}$ is smooth and $N_{\xi}$ is a hypersurface in $M_{\xi}$, which is a transversality condition between $N$ and $\mu^{-1}(\xi)$, one can use Theorem 1.5 in  \cite{Anghel93} (more precisely, its special case for  $\Spinc$-Dirac operators, which is the non-equivariant case of  Corollary \ref{cor Spinc}) to express the $\Spinc$-quantisation $Q^{\Spinc}(N^+_{\xi})$ as the index of a Callias-type operator on $M_{\xi}$.
\end{remark}

\bibliographystyle{plain}

\bibliography{mybib}

\end{document}